\documentclass[reqno,12pt]{amsart}

\usepackage{graphicx,pdfsync,amssymb, latexsym,amsfonts,amsbsy,color,subcaption,esint,mathtools,enumerate,nicefrac,bm}
\usepackage{hyperref}
\usepackage{booktabs}
\usepackage{multirow}
\usepackage{appendix}
\hypersetup{
colorlinks=true,
linkcolor=blue,
filecolor=blue,
urlcolor=blue,
citecolor=blue,
}

\usepackage{float}
\usepackage{mathrsfs}
\restylefloat{table}

\usepackage{enumitem}
\setitemize{itemsep=0.4em}
\setenumerate{itemsep=0.5em}

\usepackage[utf8]{inputenc}% for special characters
\usepackage[T1]{fontenc}% for special characters
\usepackage[top=1.1in, bottom=1in, left=1in, right=1in]{geometry}
\DeclareMathOperator{\curl}{curl}

\newtheorem{theorem}{Theorem}[section]
\newtheorem{lemma}[theorem]{Lemma}
\newtheorem{proposition}[theorem]{Proposition}
\newtheorem{definition}[theorem]{Definition}

\newtheorem{remark}[theorem]{Remark}

\newcommand{\thistheoremname}{}
\newtheorem{genericthm}[theorem]{\thistheoremname}

 \newtheorem*{genericthm*}{\thistheoremname}
\newenvironment{namedthm*}[1]
  {\renewcommand{\thistheoremname}{#1}%
   \begin{genericthm*}}
  {\end{genericthm*}}

\usepackage{times}
\usepackage{setspace}
\newcommand{\dd}{\mathop{}\!\mathrm{d}}
\let\del\partial
\newcommand{\weakstar}{\overset{\ast}{\rightharpoonup}} % 弱*收敛符号
\newcommand{\supp}{\text{supp}\,}

\newcommand{\RR}{\mathring R}
\newcommand{\tRR}{\mathring {\widetilde{R}}}
\newcommand{\R}{\mathbb R}

\newcommand{\ZZ}{\mathbb Z}
\newcommand{\NN}{\mathbb N}
\newcommand{\BMO}{{\rm BMO}}

\newcommand{\ootimes}{\mathbin{\mathring{\otimes}}}

\newcommand{\uin}[0]{u^{\textup{in}}}

\newcommand{\wcq}[1][q+1]{w^{\textup{(c)}}_{#1}}

\newcommand{\wttq}[1][q+1]{w^{\textup{(t)}}_{#1}}
\newcommand{\wttqc}[1][q+1]{w^{\textup{(tc)}}_{#1}}

\newcommand{\PP}[0]{\mathbb{P}_{\neq 0}}

\newcommand{\wqloc}{w^\textup{loc}_{q+1}}

\newcommand{\TT}[0]{\mathsf{T}}
\newcommand{\TTT}[0]{\mathbb{T}}

\newcommand{\wtq}[1][q+1]{w^{\textup{(ns)}}_{2q}}
\newcommand{\wpq}[1][q+1]{w^{\textup{(pri)}}_{2q}}
\newcommand{\wpp}{ {w}^{\text{(p)}} }
\newcommand{\wpm}{ {w}^{\text{(p),main}} }
\newcommand{\wpr}{ {w}^{\text{(p),rem}} }
\newcommand{\wss}{ {w}^{\text{(s)}} }

\newcommand{\ii}{\textup{i}}

\newcommand{\ve}{v^{\epsilon}}

\newcommand{\Div}{\text{div}}

\newcommand{\vq}{v_q}
\newcommand{\vlql}{v^{\text{loc}}_{\ell_q}}
\newcommand{\vlqnl}{v^{\text{non-loc}}_{\ell_q}}

\newcommand{\vdq}{v^{\textup{D}}_q}
\newcommand{\Rem}{R^{\text{rem}}_q}
\newcommand{\LinfB}[1][q+1]{\widetilde{L}^\infty_T B^{\frac{1}{2}}_{2,1}}
\newcommand{\LoB}[1][q+1]{{L}^1_t B^{1/2}_{2,1}}
\newcommand{\LoBt}[1][q+1]{\widetilde {L}^1_t B^{5/2}_{2,1}}
\newcommand{\spt}{\text{spt}}

\newcommand{\vql}{v^{\text{loc}}_q}
\newcommand{\vqnl}{v^{\text{non-loc}}_q}
\allowdisplaybreaks[4]
\numberwithin{equation}{section}

\begin{document}

\title{Non-uniqueness of smooth solutions to the Navier-Stokes
equations on torus $\TTT^2$}

\author{Changxing Miao}
\address[Changxing Miao]{Institute  of Applied Physics and Computational Mathematics, Beijing, China.}

\email{miao\_changxing@iapcm.ac.cn}

\author{Yao Nie}

\address[Yao Nie]{School of Mathematical Sciences and LPMC, Nankai University, Tianjin, China.}

 \email{nieyao@nankai.edu.cn}

\author{Weikui Ye}

\address[Weikui Ye]{School of Mathematical Sciences, South China Normal University, Guangzhou,  China}

 \email{904817751@qq.com}

%    General info
%\subjclass[2020]{76D03, 35Q35}

\date{\today}
\maketitle

\begin{abstract}The local well-posedness theory for the incompressible Navier-Stokes equations in $\BMO^{-1}$ has attracted considerable attention over the past two decades. In a recent breakthrough, Coiculescu and Palasek (Invent. Math., 2025) settled the three-dimensional case by demonstrating the existence of two distinct global solutions, both smooth for $t>0$, evolving from a common initial datum in ${\rm BMO}^{-1}(\mathbb{T}^3)$. However, the two-dimensional case remains open. In this paper, we solve the two-dimensional problem. Unlike its three-dimensional counterpart, the two-dimensional setting presents additional difficulties stemming from the geometric intersections of two-dimensional Mikado flows. To overcome these difficulties, we develop a heat-dominated Fourier mode flow built upon steady two-dimensional Euler flows, and present  the proof using a new iterative scheme.
\end{abstract}

\emph{Keywords}: \small{2D Navier–Stokes equations, non-uniqueness,  global solutions, ${\rm BMO}^{-1}$.}

\emph{Mathematics Subject Classification}: \small{35Q30,~35Q35,~76D03.}

\section{Introduction}
In this paper, we consider the incompressible Navier-Stokes equations  on  
$\TTT^2$ 
:
\begin{equation}
\left\{
\begin{aligned}
&
\partial_t u-\Delta u+(u\cdot\nabla) u+\nabla p=0,\\
&
\nabla\cdot
 u=0,
\end{aligned}
\right. \tag{NS}\label
{NS}
\end{equation}
supplemented with a divergence-free initial datum 
$\uin $. Here $u$ denotes the velocity  of fluid and $p$ the pressure.

When posed on 
$\R^d$, the equations \eqref{NS} admit a natural scaling invariance: if $(u,p)$ is a solution, then for every $\lambda>0$
 the rescaled pair
\[
u_
\lambda(t,x)=\lambda u(\lambda^2 t,\lambda x),\qquad
p_
\lambda(t,x)=\lambda^2 p(\lambda^2 t,\lambda
 x),
\]
also satisfies 
\eqref{NS}, corresponding to the initial datum $\lambda \uin (\lambda x)$. This scaling invariance motivates the notion of a \emph{critical} space---a Banach space $(X,\|\cdot\|_X)$
 of initial data  such that
\[
\|\uin(\cdot)\|_X\sim\|\lambda \uin(\lambda\cdot)\|_X, \qquad\forall\lambda
>0.
\]
The well-posedness theories in various critical spaces has been extensively investigated. Fujita and Kato~\cite{FK} established local well-posedness for large initial data and global well-posedness for small data in the Sobolev space $\dot H^{\frac{d}{2}-1}(\mathbb{R}^d)$, while Kato \cite{Kato84} treated the Lebesgue space $L^d(\mathbb{R}^d)$. Global well-posedness for small data was further extended to the critical Besov spaces $\dot B_{p,\infty}^{d/p-1}(\R^d)$ ($1\le p<\infty$) by Cannone and  Planchon \cite{Cannone95, Can97, CM95, Planchon96}. Koch and Tataru \cite{KochTataru01} later extended the theory to the space ${\rm BMO}^{-1}(\R^d)$. These spaces are linked by the following continuous embeddings:
\[
\dot H^{\frac{d}{2}-1}\subset L^d\subset\dot B_{p,\infty}^{d/p-1}\subset {\rm BMO}^{-1}\subset\dot F_{\infty,r}^{-1}\subset\dot B_{\infty,\infty
}^{-1},
\qquad(d<p<\infty,\;
r>2).
\]
In the largest critical space $\dot B^{-1}_{\infty,\infty}$, Bourgain and Pavlović~\cite{BourgainPavlovic03} showed the {ill-posedness} via a norm-inflation phenomenon: there exist  initial data $\varphi$  with arbitrarily small $\|\varphi\|_{\dot B^{-1}_{\infty,\infty}}$ such that the corresponding solutions become arbitrarily large after an arbitrarily short time. Yoneda \cite{Yoneda03} extended this ill-posedness to the spaces $\dot F^{-1}_{\infty,r}$ for all $r>2$. As a consequence, ${\rm BMO}^{-1}$ is the largest critical space in which global well-posedness is known to hold for small initial data. Whether local well-posedness remains true for large data in ${\rm BMO}^{-1}$ is then a natural and central open problem.  In the seminal  work \cite{CP}, Coiculescu and Palasek gave a negative answer to this problem in the three-dimensional setting. They constructed initial data in ${\rm BMO}^{-1}(\mathbb{T}^3)$ that admit two distinct global solutions, both smooth for $t>0$.   Whether the same phenomenon occurs in two dimensions is an open problem, as proposed in \cite[Remark 1.7]{CP}. In this paper, we settle this problem and give a definitive answer as  follows:
\begin{theorem}\label{t:main}
The system \eqref{NS} admits two distinct global solutions $u, \widetilde{u} \in C^\infty_{t,x}(\R^{+}\times\TTT^2)$  with the same initial data $\uin$ and
\begin{align*}
u, \widetilde{u}  \in  L^{\infty}([0,\infty);{\rm BMO}^{-1}(\TTT^2))\cap L^{2}([0,\infty);L^r(\TTT^2))\cap L^{1}([0,\infty);W^{1,r}(\TTT^2))
\end{align*}
for all $1\le r<\infty$.
\end{theorem}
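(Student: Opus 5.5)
The plan follows the strategy of Coiculescu--Palasek \cite{CP}, run backward in time from $t=\infty$ toward $t=0$. We build a solution $u$ as a limit of smooth solutions $u_q$ of a Navier--Stokes--Reynolds system
\[
\partial_t u_q-\Delta u_q+\Div(u_q\otimes u_q)+\nabla p_q=\Div \RR_q .
\]
Here $u_q$ is an exact smooth solution on $[T_q,\infty)$, with $T_q\downarrow 0$ super-exponentially. The error $\RR_q$ is supported in $[0,T_q]$ and is small in a scale-invariant norm. At step $q$ we add a perturbation $w_{q+1}$ concentrated at frequency $\lambda_{q+1}\sim T_{q+1}^{-1/2}$ and living mainly on the time layer $[T_{q+1},T_q]$. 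There it is damped by the heat flow, since $e^{-\lambda_{q+1}^2 t}$ is tiny for $t\gtrsim T_q$. This makes $u$ smooth for every $t>0$: on $[\tau,\infty)$ only finitely many nonnegligible perturbations act. Each $w_{q+1}$ has size $\sim\lambda_{q+1}$ in $L^\infty$ near $t=T_{q+1}$. That is exactly the $\BMO^{-1}$ critical scaling, so $\|w_{q+1}(t)\|_{\BMO^{-1}}\lesssim 1$ uniformly. The frequencies are lacunary, so the pieces do not add up in $\BMO^{-1}$ (a Carleson-measure computation per dyadic time layer). The $L^2_tL^r$ and $L^1_tW^{1,r}$ bounds follow the same way: each layer contributes $\lambda_{q+1}^2\cdot T_q\cdot(\text{decay})$, and the sum is geometric.

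The key new ingredient in two dimensions replaces 3D Mikado flows, whose supports must be disjoint and cannot be in the plane. We will use a heat-dominated Fourier mode flow. We take stationary 2D Euler shear/cellular flows $W_k=a_k\, k^\perp\cos(\lambda_{q+1}k\cdot x)$ (or sin), for which $\Div(W_k\otimes W_k)$ is a gradient. We multiply them by the heat factor $e^{-\lambda_{q+1}^2|k|^2 t}$, so each mode solves the heat equation exactly. The low-frequency part of $\sum_k W_k\otimes W_k$ cancels $\RR_q$ via a geometric lemma in the plane. The cross interactions $W_k\otimes W_{k'}$ with $k\neq\pm k'$ have frequency $\gtrsim\lambda_{q+1}$, and inverse divergence gains a factor $\lambda_{q+1}^{-1}$ there. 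This replaces the disjoint-support argument. The time profile of $\RR_q$ is matched through $\partial_t$ of the squared heat amplitude: $\int \lambda^2 e^{-2\lambda^2 t}\,dt$ reproduces the needed stress. We also add a temporal corrector, as in \cite{CP}, and a divergence corrector.

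We construct two such schemes, $u$ and $\widetilde u$, differing at a fixed step, e.g.\ with opposite signs for the first perturbation or a different large-time state. Both converge to smooth solutions on $(0,\infty)$. We will show their initial data coincide by arranging that $u_q(0)$ converges in the distributional/$\BMO^{-1}$ sense to the same limit $\uin$. The trick is to make the data the limit of the summed perturbations at $t=0$, built from identical high-frequency pieces, while the difference of the two solutions is a low-frequency function. That difference is made to vanish at $t=0$ through a backward construction, i.e.\ the differing part is created only by the nonlinear transfer of energy. Uniqueness fails because $\widetilde u-u\neq 0$ for $t>0$.

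The main obstacle is the error estimate for the new iterative scheme in the 2D setting. The error splits into an oscillation error, from high-high cross interactions of different Fourier modes that no longer have disjoint supports, and a transport error, $u_q\cdot\nabla w_{q+1}$ and $w_{q+1}\cdot\nabla u_q$. Both must be estimated in a norm that is critical yet gains smallness, e.g.\ $L^1_tB^{0}_{\infty,1}$-type stress norms on each time layer. The fixed scaling leaves no room in the exponents, so smallness must come only from the lacunarity $\lambda_{q}/\lambda_{q+1}\to0$ and from heat decay. The first thing to try is to choose $\lambda_{q+1}=\lambda_q^{b}$ with $b>1$ large, and to bound the cross terms by $\lambda_q/\lambda_{q+1}$ after an inverse divergence.
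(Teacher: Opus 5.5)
The central step is missing. You never give a mechanism that makes two different solutions share a datum, and the one you sketch cannot work. If the two schemes share identical high-frequency pieces, the low-frequency part of $\Div(w\otimes w)$ is the same in both, so nonlinear transfer cannot create two different low modes from zero. Flipping the sign of the first perturbation changes the datum unless the next scale compensates, which needs the same mechanism one scale up, and so on.

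The paper organizes this regress with two interlaced sequences of exact solutions $u_m=u_{m-2}+w_m$, with $w_m=\wpp_m+\wss_m+w^{\rm (ns)}_m$. Here $\wss_m=\wpp_{m-1}e^{-2\lambda_m^2t}$ is a copy of the latest flow of the \emph{other} sequence. The amplitudes of $\wpp_m$ come from the geometric lemma applied to $\mathrm{Id}+\mathcal R\wpp_{m-1}/(1000C_0)$. As a result, the low mode of $\Div(\wpp_m\otimes\wpp_m)$ equals $2\lambda_m^2e^{-2\lambda_m^2t}\wpp_{m-1}$ up to a gradient and small errors, and this cancels $\partial_t\wss_m$ (see \eqref{struc}, \eqref{wss2}). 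So what you call matching the time profile of $\RR_q$ is really draining a part of the solution, not cancelling a stress.

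Since $\wss_m(0)=\wpp_{m-1}(0)$ and $w^{\rm (ns)}_m(0)=0$, both limits have datum $\sum_m\wpp_m(0)$; the partial sums differ only by $\wpp_{2q+1}(0)\to0$ in $W^{-1,p}$. At $t=\lambda_1^{-2}$, however, the odd solution still contains $\wpp_1$, while the even one has damped it by $e^{-2\lambda_2^2\lambda_1^{-2}}$. The low mode is present in the common datum and destroyed in one solution, not created from zero. The paper also uses no Reynolds stress: residual errors are absorbed exactly by $w^{\rm (ns)}_m$, which solves a forced Navier--Stokes problem from zero data in $\widetilde L^\infty_tB^{-1/2}_{\infty,1}\cap L^1_tB^{3/2}_{\infty,1}$.

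Your summability claims also fail for the building blocks you propose.
\begin{itemize}
\item $\BMO^{-1}$ adds lacunary scales in $\ell^2$, not $\ell^\infty$. The Carleson box $[0,R^2]\times B_R$ sees every scale $\lambda_q\gtrsim R^{-1}$, so e.g.\ $\sum_{q\le Q}\lambda_q\sin(\lambda_qx_1)e_2$ has $\BMO^{-1}$ norm $\gtrsim\sqrt Q$.
\item Similarly, $\|\lambda e^{-\lambda^2t}f\|_{L^2_tL^r}=2^{-1/2}\|f\|_{L^r}$ and its $L^1_tW^{1,r}$ analogue are scale invariant, so heat decay gives no geometric factor.
\item Nor can the amplitudes be made small. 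They must reproduce the $O(1)$ matrix above, and by the Koch--Tataru contraction argument two solutions that are small in the scale-invariant path norm near $t=0$ coincide.
\end{itemize}

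The paper gets decay from spatial intermittency instead. The phases $e^{\ii\lambda_mk\cdot x}$ are multiplied by 2D Mikado profiles $\phi(\lambda_m^{1/4}k\cdot x)$ and by the cutoff $\chi_{m-1}$ onto the nested set $\bigcap_l\bigcup_k\widetilde\Omega_{k,l}$, whose measure is $\lesssim2^{-m}$. This gives $\|w_m\|_{L^2_tL^r\cap L^1_tW^{1,r}}\lesssim2^{-m/r}$ and the factor $\sum 2^{-m}$ in the $\BMO^{-1}$ bound. These 2D profiles necessarily intersect, and the Fourier phase, which solves both the stationary Euler and the free heat equation, is what turns their cross interactions into gradients or frequency-$\lambda_m$ errors. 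Your full-support shear flows $a_k\,k^\perp\cos(\lambda k\cdot x)$ have no such decay.
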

\begin{remark}By a modification of the method developed in this paper and combining the iterative scheme introduced in \cite{MNY-ar1, MNY-ar2}, one can expect similar results  on $\R^2$.
\end{remark}

\noindent{\bf Main ideas}. We will employ an iterative framework to prove  Theorem \ref{t:main}. Specifically, assuming that $\{(u_l,p_l)\}_{1\le l\le m-1}$ are solutions to the equations \eqref{NS},  we  construct $u_m = u_{m-2} + w_m$ as a solution to the system \eqref{NS}. Here, $w_m$ consists of three components: $\wpp_m$, $\wss_m$, and $w^{\text{(ns)}}_m$.
\begin{enumerate}
    \item [$\bullet$] \textit{ The heat-dominated Fourier mode flow $\wpp_m$} is constructed  as
    \begin{align*}
     \wpp_{m} \sim  \lambda_{m} e^{-\lambda^2_{m}t} \sum_{k \in \Lambda} a_k\left({\rm Id}+\tfrac{\mathcal{R}\wpp_{m-1}}{1000\|\mathcal{R}\wpp_{m-1}\|_{L^\infty}}\right) \chi_{m-1} \phi(\lambda^{1/4}_{m}k\cdot x) e^{\mathrm{i}\lambda_{m} k \cdot x} \bar{k} +{\rm l.o.t .},
    \end{align*}
   where $\phi(\lambda^{1/4}_{m}k\cdot x) \bar{k}$ is a 2D Mikado flow. These flows are inevitably geometrically intersected, and consequently, a delicate error emerges from the quadratic interaction $\Div(\wpp_{m}\otimes \wpp_{m})$.  To overcome this difficulty, we design the highly oscillatory part of $\wpp_m$ as  $e^{-\lambda^2_m t}\sum_{k\in\Lambda} e^{\ii\lambda_mk\cdot x}$, which is not only a solution of the steady two-dimensional Euler equations but also of the free heat equation. This key property allows the  oscillatory error  to be absorbed into the pressure term (see \eqref{struc}), while keeping the residual errors under control.
        \item [$\bullet$] \textit{The inverse cascade-dominated flow $\wss_m$} is given by 
        \[\wss_{m}=\wpp_{m-1}e^{-2\lambda^2_{m}t},\]
        which is introduced by \cite{CP}. The constructions of $\wpp_{2q}$ and $\wss_{2q}$ guarantees that 
        \begin{align*}
            \partial_t\wss_{m}+\Div(\wpp_{m}\otimes \wpp_{m})=\nabla P+{\rm l. o. t.},
        \end{align*}
        and $\wss_m$ oscillates with a frequency proportional to $\lambda_{m-1}$.
           \item [$\bullet$] \textit{The incompressible perturbation flow $w^{\rm (ns)}_m$} is designed to absorb  all lower-order terms (denoted by $F_m$) via  solving the forced Navier-Stokes equations \eqref{e:wt} with zero initial data. Observing that $F_m$ is small in $ L^1_t B^{-\frac{1}{2}}_{\infty,1}$, we adopt a different framework from the $\BMO^{-1}$ setting used in \cite{CP}. Specifically, we treat system \eqref{e:wt} directly in the subcritical space $\widetilde L^\infty_t B^{-\frac{1}{2}}_{\infty,1} \cap L^1_t B^{\frac{3}{2}}_{\infty,1}$, which shows that $w^{\mathrm{(ns)}}_m$ remains small in $\BMO^{-1}$.          
\end{enumerate}
Based on the construction of  $w_m$, we set
\begin{align*}
 u^{\text{odd}}=\sum_{l=0}^\infty w_{2l+1},\, \,\,u^{\text{even}}=\sum_{l=1}^\infty w_{2l}.
\end{align*}
 The initial data conditions \eqref{ini-wm} and $(\wpp_1, w^{\text{(s)}}_1,w^{\text{(ns)}}_1)=(\lambda_1e^{-\lambda_1^2 t}\sin(\lambda_1 x_1)\,e_2, 0, 0)\equiv0$ give
\begin{equation}
\left\{
\begin{aligned}\notag
u^{\text{odd}}(0,x)=&  \wpp_1(x,0) +\wss_3(x,0)+   \wpp_3(x,0)+\wss_5(x,0)+   \wpp_5(x,0)+\cdots\\
\qquad\qquad&  \quad\parallel\qquad\quad\quad\,  \parallel\quad\qquad \quad\,\,\,  \,\parallel\qquad\qquad\,\,\,\,  \parallel\\
u^{\text{even}}(0,x)=&\wss_2(x,0)+   \wpp_2(x,0) +\wss_4(x,0)+   \wpp_4(x,0)+\cdots
\end{aligned}
\right.
\end{equation}
which implies that $u^{\text{odd}}(0,x)=u^{\text{even}}(0,x)$ in the sense of distributions. 

Thanks to the built-in time factor $e^{-\lambda^2_m t}$ in the construction of $\{\wpp_m,\wss_m\}_{m\ge 2}$, we anticipate that  their sizes are exponentially small ($\sim e^{-\lambda^2_2\lambda^{-2}_1}$) in $B^{-1}_{\infty,\infty}$, whereas $\{w^{\text{(ns)}}_m\}_{m\ge2}$ exhibit algebraic decay $\lambda^{-10}_{m-1}$, at time $\lambda^{-2}_1$. These differences ensure a clear separation between $\wpp_1(\lambda^{-2}_1,x)$ and $\{\wpp_m,\wss_m, w^{\text{(ns)}}_m\}_{m\ge 2}$ in ${\rm BMO}^{-1}$, proving that $u^{\text{odd}}\neq u^{\text{even}}$.\\

%Our non-uniqueness mechanism is motivated by the seminal work of \cite{CP}. 

\noindent{\bf Some non-uniqueness results.} As previously mentioned,  
Coiculescu and Palasek \cite{CP}  have pioneered a novel mechanism to construct two distinct global  solutions in $\BMO^{-1}(\TTT^3)$. The non-uniqueness mechanism builds upon the one proposed by  Palasek in the context of the Obukhov dyadic model. A distinct mechanism  has been developed in \cite{CDP} to produce a solution of the Navier–Stokes equations that exhibits Type I blow-up of the 
$L^\infty$ norm and yields an infinite family of spatially smooth solutions sharing the same initial data. Unlike the mechanism in \cite{CP}, the mechanism  in  \cite{CDP} is designed to work for any smooth initial data and generates non-uniqueness through an inverse energy cascade. The earlier  work on non-uniqueness for the Navier-Stokes equations is relied on  the Jia–\v{S}ver\'{a}k program and the convex integration argument.

Jia and \v{S}ver\'{a}k in \cite{JS14, JS} proposed a distinct and elegant analytical framework for studying the non-uniqueness of solutions to the Navier–Stokes equations. Their central insight is to reformulate the problem in self-similar variables, thereby translating the problem of solution branching at the initial time into one concerning the nonlinear stability of stationary profiles. Albritton, Bru\'{e} and Colombo in \cite{ABC} proved the non-uniqueness of the Leray-Hopf weak solutions for the forced 3D Navier-Stokes equations by adapting properly Vishik's
construction in \cite{V18a, V18b},  in accordance with the predictions of Jia and \v{S}ver\'{a}k. In a recent paper \cite{HWY}, Hou, Wang, and Yang claim to prove the non-uniqueness of Leray–Hopf solutions to the unforced 3D Navier–Stokes equations through a computer-assisted verification of the key spectral properties.

Convex integration method originates in Nash’s $C^1$ isometric embedding theorem \cite{Nash}. It was first introduced into the Euler equations in the seminal works of De Lellis and Sz\'{e}kelyhidi \cite{DS09, DS13} and further developed in a series of works (see, e.g., \cite{Buc, BDIS15, BDS, DS09, DS14, DS17, DRS,  EPP2,  Ise17, Ise18, Ise22}), and was subsequently extended to study the non-uniqueness of solutions for the Navier-Stokes equations. The breakthrough work of Buckmaster and Vicol \cite{BV} established the first example of non-unique weak solutions for the Navier-Stokes equations in the space $C_tL^2(\mathbb{T}^3)$. Subsequently, in joint work with Colombo \cite{BCV}, they established non-uniqueness for a class of weak solutions that  have bounded
kinetic energy, integrable vorticity, and are smooth outside a fractal set of singular times with Hausdorff dimension strictly
less than 1.  Cheskidov and Luo \cite{1Cheskidov} leveraged temporal intermittency within the convex integration framework to establish the nonuniqueness of very weak solutions in $L^p_tL^q(\mathbb{T}^3)$ for $1 \le p < 2$, demonstrating the sharpness of the Ladyzhenskaya–Prodi–Serrin condition at the endpoint $(p,q) = (2,\infty)$. Subsequently, the authors in \cite{CL23} showed  the non-uniqueness  in $C_tL^p(\TTT^2)$ for $p<2$, and thus sharpened the corresponding uniqueness result established in \cite{FLT}. By developing an iterative scheme that refines the approximate solution via a decomposition into local and non-local parts, we recently proved the non-uniqness of solutions in the class of $C_tL^2(\R^3)$ and $L^p_tL^q(\mathbb{R}^3)$ for $1 \le p < 2$ in \cite{MNY-ar1,MNY-ar2}.  Convex integration argument has also been applied to various other models, such as  the MHD equations \cite{2Beekie, EPP, LZZ, MY, MNY2} and the Boussinesq equations \cite{K26, MNY, TZ17, TZ18}.

\noindent\textbf{Organization of the paper.} In Section~\ref{Induction}, we introduce a new iterative scheme and an iterative proposition, which are crucial for proving  Theorem~\ref{t:main}. In Section \ref{proof-1}, we give the proof of  the iterative proposition.  In Section \ref{p-t}, we complete the proof of Theorem~\ref{t:main} based on the iterative proposition. Finally, the Appendix collects the necessary mathematical tools required in this paper.
 
\noindent\textbf{Notation.} In this paper, $\TTT^2=\R^2/(2\pi\ZZ)^2$. For a $\TTT^2$-periodic function $f$, we denote
\begin{align*}
\mathbb{P}_{=0} f:=\frac{1}{|\TTT^2|}\int_{\TTT^2} f(x)\dd x,\quad\PP f:=f-\mathbb{P}_{=0} f.
\end{align*}
In the following, the notation $a\lesssim b$ means $a\le Cb$ for a universal constant $C$ that may change from line to line.  {Without ambiguity, we will denote $L^r([0,T];Y(\TTT^2))$ and $L^r([0,\infty);Y(\TTT^2))$ by $L^r_T Y$ and $L^r_t Y$,  respectively.} For a vector funtion $f=(f_1, f_2)$, $\curl f=\partial_1 f_2-\partial_2 f_1$.

Let $\partial^{\sigma}$ be the space derivatives, we denote
\begin{align*}
\|f\|_{L^r_tC^N}:=\sum_{j=0}^N\max_{|\sigma|=j}\|\partial^\sigma f\|_{L^r_{t}L^\infty}.
\end{align*}
and 
\begin{align*}
\|(f_1,f_2,\cdots,f_m)\|_{X}:=\max\Big\{\|f_1\|_X, \|f_2\|_X, \cdots, \|f_m\|_X\Big\}.
\end{align*}
\section{Induction scheme}\label{Induction}
The construction of the solutions in Theorem~\ref{t:main} is inspired by the non-uniqueness mechanism developed in~\cite{CP}. To obtain two distinct global solutions in ${\rm BMO}^{-1}$ sharing with the same initial datum, the two sequences of  solutions are constructed  via an alternating and intertwined scheme. To present this construction in a clear and structured manner, we introduce an iterative scheme which is  formally stated as Proposition~\ref{iteration}.
\subsection{Parameters}\label{para}
First of all, we introduce several parameters. Since \(\Lambda\subseteq \mathbb{S}^1\cap\mathbb{Q}^2\) defined in Lemma~\ref{first S} is a finite set, there exists a positive integer \(N_{\Lambda}\) such that 
\[N_{\Lambda} k \in \mathbb{Z}^2, \,\,\quad\forall k\in \Lambda.\]
 Let \(C_0\) be a sufficiently large constant. Suppose that \(a,b\in\mathbb{N}\) are two large numbers depending on \(C_0\), with \(a,b\ge 2^{15}\). We define
\begin{align}\label{def-lq}
    \lambda_q := N^4_{\Lambda} a^{b^q}, \quad \ell_q:=\lambda^{-50}_q,\quad q\ge 1.
\end{align}

%\subsection{The Navier-Stokes-Reynolds system}Let us introduce the  relaxation of the system \eqref{NS} with a stress tensor error term that  tends to $0$ in the sense of distributions. More precisely, the approximate system, the so-called Navier–Stokes-Reynolds  system is governed by
%\begin{equation}\label{NSR}\tag{NSR}
%\left\{ \begin{alignedat}{-1}
%   & \del_t u_q-\Delta u_q+\Div (u_q\otimes u_q)  +\nabla p_q   =\RR_q,
% \\
 % &\nabla \cdot u_q = 0, \\
%\end{alignedat}\right.
%\end{equation}
%associated with some initial data $\uin\in H^3(\R^3)$.
%The \emph{Reynolds stress} $\RR_q$ is trace-free symmetric matrix. Here and below,  $v \otimes u\coloneq  (v_i u_j)_{i,j=1}^3$, and the divergence of a $3\times 3$ matrix $M=(M_{ij})_{i,j=1}^3$ is defined by $\Div M$ with components $(\Div M)_j \coloneq   \partial_i M_{ij}$.
%We enforce the conditions on $u_q$ and $p_q$:
%\[\int_{\R^3}u_q\dd x=\int_{\R^3}p_q\dd x=0.\]
\subsection{Iterative procedure }\label{sec-ite} We present our iteration by induction. Define \((u_j,p_j)=(0,0)\) for \(j\le 0\), and set
\[
(u_1,p_1)=\bigl(\lambda_1e^{-\lambda_1^2 t}\sin(\lambda_1 x_1)\,e_2,\;0\bigr),
\qquad 
\wpp_1 = u_1,\qquad \wss_1 = w_1^{\mathrm{(ns)}} = 0.
\]

Given an integer $m\ge 2$, assume that for each $0\le j\le m-1$, the  solutions  
$\{(u_j,p_j)\}_{0\le j\le m-1}\subseteq C^\infty_{t,x}(\R^{+}\times \TTT^2)$ of the Navier–Stokes equations \eqref{NS} on \(\mathbb{R}^{+}\times\mathbb{T}^2\)  have been constructed and satisfy 
\begin{align}
& \|u_j\|_{L^{\infty}_{t,x}} \le C_0\lambda_j,
   \qquad 
   \|u_j\|_{L^1_t C^2} \le C_0\lambda_j,  \label{u-Linfty} \\[4pt]
& \|u_j\|_{L^1_t C^1} \le 
    \begin{cases}
        C_0 q,      & \text{if } j = 2q, \\[2pt]
        C_0 (q+1), & \text{if } j = 2q+1,
    \end{cases} \label{u-L1C1} \\[8pt]
& u_j = 
    \begin{cases}
        \displaystyle\sum_{l=1}^{q} w_{2l}
        = \sum_{l=1}^{q} \bigl( \wpp_{2l} + \wss_{2l} + w_{2l}^{\mathrm{(ns)}} \bigr), 
        & \text{if } j = 2q, \\[10pt]
        \displaystyle\sum_{l=0}^{q} w_{2l+1}
        = \sum_{l=0}^{q} \bigl( \wpp_{2l+1} + \wss_{2l+1} + w_{2l+1}^{\mathrm{(ns)}} \bigr),
        & \text{if } j = 2q+1.
    \end{cases} \label{u-decomposition}
\end{align}
Let $\mathcal R := (\nabla+\nabla^{\!\top})(-\Delta)^{-1}$, we rewrite $\wpp_j$ and $\wss_j$ as
\begin{align}\label{def-R}
 \wpp_j = \Div(\mathcal R \wpp_j),\,\,\wss_j
 = \Div(\mathcal R \wss_j)   
\end{align}
such that 
\begin{align}
&\|\partial_t^{\,l}\, \mathcal R \wpp_j\|_{L^\infty_t C^{N}} 
    \le C^{3/4}_0 \lambda_j^{N+2l}, \,\quad\|\mathcal R\wss_j\|_{L^\infty_t C^{N}} \le C^{3/4}_0 \lambda_{j-1}^{N},\quad\quad\,\,\,0\le N\le 4;\,\,l=0,1,\label{Rwj}\\
    &\|\wpp_j\|_{L^\infty_t C^{N}} \le C^{3/4}_0 \lambda_j^{N+1}, \qquad\quad\,\,\|\wss_j\|_{L^\infty_t C^{N}} \le C^{3/4}_0 \lambda_{j-1}^{N+1},\qquad\quad
    0\le N\le 3, \label{wp-N}\\
    &\|w^{(\rm p)}_{j}\|_{L^1_tC^N} \leq C^{3/4}_0 \lambda^{N-1}_{j}, \qquad \,\,\,\quad\|w^{(\rm s)}_{j}\|_{L^1_tC^N} \leq C^{3/4}_0 \lambda^{-2}_{j} \lambda^{N+1}_{j-1}, \,\quad\,\,\,0\le N\le 2, \label{wp-wsL1}\\
    & \|\wpp_j\|_{L^{2}_tL^r \cap L^{1}_t W^{1,r}} \le C_0^{3/4}\, 2^{-\frac{j}{r}}, \quad\,\,\,\|\wss_j\|_{L^{2}_tL^r \cap L^{1}_t W^{1,r}} \le C_0^{3/4} 2^{-\frac{j}{r}},\,\,\,\,\,\,
    \forall\, 1\le r<\infty. \label{wp-L}
\end{align}
The perturbation flow $w^{\text{(ns)}}_j$ satisfy that
\begin{align}\label{e-ns}
  \|w^{\text{(ns)}}_j\|_{\widetilde L^{\infty}_tB^{-1/2}_{\infty,1}\cap L^{1}_tB^{3/2}_{\infty,1}} \le \lambda^{-10}_{j-1},\qquad 
\|w^{\text{(ns)}}_j\|_{\widetilde L^{\infty}_tB^{0}_{\infty,1}\cap L^{1}_tB^{2}_{\infty,1}} \le \lambda^{1/2}_{j}.
\end{align}

\begin{proposition}[Inductive step]\label{iteration} There exists an absolute constant $C_0$ (independent of $m$) with the following property. Let $m \ge 2$ and suppose that $\{(u_j, p_j)\}_{0 \le j \le m-1} \subseteq C^\infty_{t,x}(\mathbb{R}^+ \times \mathbb{T}^2)$ are solutions of the Navier–Stokes system \eqref{NS} satisfying estimates \eqref{u-Linfty}–\eqref{wp-L}. Then there exists a  $w_m = w^{(\rm p)}_{m} + w^{(\rm s)}_{m} + w^{(\rm ns)}_{2q}$ such that  $u_m := u_{m-2} + w_m$ also solves \eqref{NS} and satisfies  \eqref{u-Linfty}–\eqref{wp-L} with $m-1$ replaced by $m$, and
\begin{align}
& \|u_m\|_{L^\infty_t\BMO^{-1}} \le C_0, \label{e-Um-BMO}\\
& w^{\rm{(p)}}_{m-1}(0,x) = w^{\rm{(s)}}_m(0,x), \qquad w^{\rm{(ns)}}_m(0,x) = 0. \label{ini-wm}
\end{align}
Moreover, the above estimates immediately show that
\begin{align}
    & \|w_m\|_{L^2_t L^r \cap L^1_t W^{1,r}} \le C_0 2^{-\frac{m}{r}}, \qquad \forall\, 1 \le r < \infty, \label{wm-L} \\
& \|w_m\|_{L^\infty_{t,x}} \le \tfrac12 C_0 \lambda_m, \quad
   \|w_m\|_{L^1_t C^1} \le \tfrac12 C_0,\quad
   \|w_m\|_{L^1_t C^2} \le \tfrac12 C_0 \lambda_m.\label{wm-infty} 
\end{align}
\end{proposition}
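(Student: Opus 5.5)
The plan is to build $w_m$ in three layers and then verify the bounds. Throughout, $u_{m-2}$ is an exact solution, and $\wpp_{m-1}$ is known through its potential $\mathcal R\wpp_{m-1}$ from \eqref{def-R}.

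\textbf{Principal part.} Following the ansatz in the introduction, I set
\[
\wpp_m=\nabla^\perp\Big(\lambda_m^{-1}e^{-\lambda_m^2t}\sum_{k\in\Lambda}a_k\big(\mathrm{Id}+\tfrac{\mathcal R\wpp_{m-1}}{1000\|\mathcal R\wpp_{m-1}\|_{L^\infty}}\big)\chi_{m-1}\,\phi(\lambda_m^{1/4}k\cdot x)\,\ii\, e^{\ii\lambda_m k\cdot x}\Big),
\]
so that it is divergence free automatically. The coefficients $a_k$ come from the geometric Lemma~\ref{first S}: the matrix $\mathrm{Id}+\epsilon \mathcal R\wpp_{m-1}$ is decomposed as $\sum_k a_k^2(\cdot)\,\bar k\otimes\bar k$, with $a_k$ smooth because the perturbation is at most $1/1000$. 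I then compute $\Div(\wpp_m\otimes\wpp_m)$ and split it three ways:
\begin{itemize}
\item The diagonal low-frequency part gives $\lambda_m^2e^{-2\lambda_m^2t}\,c\,\Div(\mathcal R\wpp_{m-1})$ modulo gradients.
\item The high-frequency part at the frequencies $e^{\ii\lambda_m(k+k')\cdot x}$ is handled by the identity \eqref{struc}. The sum $\sum_k e^{\ii\lambda_mk\cdot x}\bar k$ solves steady 2D Euler, since all $|k|=1$, and $\Div(U\otimes U)=\nabla(|U|^2/2)$ for such $U$. So this part becomes a pressure gradient plus commutator terms in which derivatives fall on the slow amplitudes $a_k\chi_{m-1}\phi(\lambda_m^{1/4}\cdot)$. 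Those terms are a factor $\lambda_m^{-3/4}$ smaller.
\item The linear heat part $(\partial_t-\Delta)\wpp_m$ vanishes up to amplitude-derivative terms, again because $e^{-\lambda_m^2t}e^{\ii\lambda_mk\cdot x}$ solves the free heat equation.
\end{itemize}

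\textbf{Inverse cascade part.} Next I set $\wss_m=\wpp_{m-1}e^{-2\lambda_m^2t}$ and choose the normalisations (the factor $c$ and the powers of $\lambda$) so that $\partial_t\wss_m=-2\lambda_m^2e^{-2\lambda_m^2t}\wpp_{m-1}$ exactly cancels the low-frequency quadratic part. This is the point of the $\mathrm{Id}+\mathcal R\wpp_{m-1}$ trick. It also gives $\wss_m(0)=\wpp_{m-1}(0)$ at once, which is \eqref{ini-wm}. The cutoff $\chi_{m-1}$ must be arranged so that $\chi_{m-1}\equiv1$ wherever the cancellation is needed; I expect it to be a time cutoff where $\mathcal R\wpp_{m-1}$ is non-negligible, with the rest relegated to lower order.

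\textbf{Remaining error and the $\wtq$ correction.} Everything left over is collected into $F_m$. This includes:
\begin{itemize}
\item the commutator terms above;
\item the cross terms $\Div(u_{m-2}\otimes(\wpp_m+\wss_m))+\Div((\wpp_m+\wss_m)\otimes u_{m-2})$;
\item $\Div(\wss_m\otimes\wss_m)$ and the mixed $\wpp_m/\wss_m$ terms;
\item $(\partial_t-\Delta)$ falling on $\wss_m$ (using that $\wpp_{m-1}$ nearly solves the heat equation at rate $\lambda_{m-1}^2\ll\lambda_m^2$).
\end{itemize}
I then estimate $\|F_m\|_{L^1_tB^{-1/2}_{\infty,1}}\lesssim\lambda_{m-1}^{-20}$ using \eqref{u-Linfty}--\eqref{wp-N}. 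For the cross terms with $u_{m-2}$, the gain comes from moving derivatives via the high oscillation $\lambda_m$ together with the time integrability $\int e^{-\lambda_m^2t}\,dt=\lambda_m^{-2}$. Finally, $\wtq$ is obtained by solving the forced Navier--Stokes system linearised around $u_{m-2}+\wpp_m+\wss_m$ with forcing $F_m$ and zero data, by a fixed point in $\widetilde L^\infty_tB^{-1/2}_{\infty,1}\cap L^1_tB^{3/2}_{\infty,1}$. The linear terms with coefficient $u_{m-2}$ are handled by Gr\"onwall using $\|u_{m-2}\|_{L^1_tC^1}\le C_0 q$. This yields \eqref{e-ns}, with $\lambda_{m-1}^{-10}$ absorbing the $e^{C_0q}$ loss, since $\lambda$ grows superexponentially. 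Smoothness follows from parabolic regularity.

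\textbf{Verifying the bounds.} The estimates \eqref{Rwj}--\eqref{wp-L} follow by direct differentiation. For \eqref{wp-L}, I use $\|\lambda_me^{-\lambda_m^2t}\|_{L^2_t}\sim1$ together with the Mikado intermittency $\|\phi(\lambda_m^{1/4}\cdot)\|_{L^r}$ to get the $2^{-m/r}$ decay; otherwise I would choose the support to shrink by a factor of $2$ per step. The $\BMO^{-1}$ bound \eqref{e-Um-BMO} comes from the Carleson characterisation: the pieces live at separated frequencies $\lambda_j$, and $\lambda_j^{-1}\|\wpp_j\|$ is uniformly bounded. Then \eqref{wm-L} and \eqref{wm-infty} are summations of these estimates.

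The main obstacle I anticipate is the $L^1_tB^{-1/2}_{\infty,1}$ bound on the cross term $\Div(u_{m-2}\otimes\wpp_m)$ together with the amplitude commutators at the Mikado scale $\lambda_m^{1/4}$. One has to check that the gain $\lambda_m^{-1/2}$ (from $B^{-1/2}$ at frequency $\lambda_m$) times $\lambda_m\cdot\lambda_m^{-2}$ (from the time integral) beats the losses $\lambda_m^{1/4}$ and $\|u_{m-2}\|_{L^\infty}\le C_0\lambda_{m-2}$. Only if this succeeds does the absolute constant $C_0$ close the induction.
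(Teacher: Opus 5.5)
\textbf{Where the proposal matches the paper.} Your overall architecture is the paper's:
\begin{itemize}
\item a heat-dominated principal part whose amplitudes come from the geometric lemma applied to $\mathrm{Id}+\mathcal R\wpp_{m-1}/(1000C_0)$;
\item the inverse-cascade corrector $\wss_m=\wpp_{m-1}e^{-2\lambda_m^2t}$;
\item the steady-Euler/free-heat structure of $e^{-\lambda_m^2t}e^{\ii\lambda_m k\cdot x}$, which pushes the high-frequency and linear errors into $F_m$;
\item a forced Navier--Stokes correction closed in $\widetilde L^\infty_tB^{-1/2}_{\infty,1}\cap L^1_tB^{3/2}_{\infty,1}$ by Gr\"onwall, with the $e^{CC_0q}$ loss absorbed by the growth of $\lambda_q$.
\end{itemize}

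\textbf{The gap: the cutoff $\chi_{m-1}$ and the nesting of supports.} You expect $\chi_{m-1}$ to be a time cutoff. In the paper it is a \emph{spatial} cutoff. It equals $1$ on the support of $\wpp_{m-1}$ and $\mathcal R\wpp_{m-1}$ (the paper uses $\chi_{m-1}\wpp_{m-1}=\wpp_{m-1}$). It is supported in $\widetilde\Omega_{m-1}$, a thin neighbourhood of the nested set $\Omega_{m-1}=\bigcap_{l\le m-1}\bigcup_{k}\Omega_{k,l}$. It does two jobs at once.
\begin{itemize}
\item The averaged quadratic term is exactly $\lambda_m^2e^{-2\lambda_m^2t}\Div\big(2000C_0\chi_{m-1}^2(\mathrm{Id}+\tfrac{\mathcal R\wpp_{m-1}}{1000C_0})\big)=2\lambda_m^2e^{-2\lambda_m^2t}\wpp_{m-1}+\nabla(\cdot)$. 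There is no $\nabla\chi^2_{m-1}\cdot\mathcal R\wpp_{m-1}$ remainder, and this is what $\partial_t\wss_m$ cancels.
\item $\operatorname{spt}_x\wpp_m\subseteq\widetilde\Omega_{m-1}\cap\bigcup_k\Omega_{k,m}$. Intersecting with strips of the much finer width $\lambda_m^{-1/4}$ removes a fixed fraction of the measure at every step: $|\Omega_m|\le\frac14|\Omega_{m-1}|$, hence $|\widetilde\Omega_{m-1}|\le 2^{-m+1}|\TTT^2|$.
\end{itemize}
A time cutoff does neither.

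\textbf{Why your substitutes do not produce the decay.} This geometric decay is the only source of the $m$-decay in \eqref{wp-L}, \eqref{wm-L} and the uniformity in \eqref{e-Um-BMO}.
\begin{itemize}
\item A single Mikado layer gives nothing. $\|\phi(\lambda_m^{1/4}k\cdot x)\|_{L^r(\TTT^2)}$ does not depend on $\lambda_m$, since one family of strips always fills a fixed fraction of the torus. Meanwhile $\|\lambda_me^{-\lambda_m^2t}\|_{L^2_t}\sim 1$ and $\|\wpp_m\|_{L^1_tW^{1,r}}\sim C_0^{1/2}$. Without nesting, $\|w_m\|_{L^2_tL^r}$ does not tend to $0$, so \eqref{wp-L} and \eqref{wm-L} fail and $u_{2q},u_{2q+1}$ are not Cauchy.
\item Your fallback (``shrink the support by a factor $2$'') names the goal but not the mechanism. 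Wherever $\mathcal R\wpp_{m-1}\neq 0$, the amplitudes must average to $\sum_k a^2_{(k,m)}\bar k\otimes\bar k\approx 2000C_0(\mathrm{Id}+\dots)$, so they cannot be made small there. Hence $\wpp_m$ can only thin out, via the Mikado factor, inside a cutoff that is $\equiv 1$ on $\operatorname{spt}\mathcal R\wpp_{m-1}$, which is precisely $\chi_{m-1}$. This thinning is not free: writing $\phi^2_{k,m}=1+\PP\phi^2_{k,m}$ produces the error $F^{(1,2)}_m$, which you did not list.
\item Frequency separation with $\lambda_j^{-1}\|\wpp_j\|_{L^\infty}\lesssim1$ only gives a $B^{-1}_{\infty,\infty}$ bound. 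By \eqref{ini-wm}, $u_m(0)=\sum_{j\le m}\wpp_j(0)$ is a lacunary sum. For full-density pieces, the Carleson quantity $R^{-2}\int_0^{R^2}\!\int_{B_R}|e^{s\Delta}u_m(0)|^2$ is of order $C_0\,\#\{j\le m:\lambda_j\gtrsim R^{-1}\}$. So $\|u_m\|_{L^\infty_t\BMO^{-1}}$ would grow like $\sqrt m$. The summable bound in Proposition~\ref{BMO} comes from the geometric decay of the nested supports.
\end{itemize}

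\textbf{Minor points.} Your stream function $\lambda_m^{-1}e^{-\lambda_m^2t}(\cdots)\ii e^{\ii\lambda_mk\cdot x}$ makes $\wpp_m$ of size $O(1)$ rather than $O(\lambda_m)$. Also, \eqref{Rwj} controls only four derivatives of $\mathcal R\wpp_{m-1}$. The paper therefore mollifies the amplitude at scale $\ell_{m-1}=\lambda_{m-1}^{-50}$ before taking the $C^{N+3}$ bounds.
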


%Moreover, we will show the following proposition.
%\begin{proposition}\label{p:main-prop2}Let $T>0$ and $e(t)=\widetilde{e}(t)$ for $t\in[0, \tfrac{T}{4}+\lambda^{-1}_1  ]$. Suppose that  $(u_q ,p_q,\RR_q)$ solves
%\eqref{NSR} and satisfies  \eqref{uq-tigh}--\eqref{et},    $(\widetilde{u}_q, \widetilde{p}_q, \widetilde{\RR}_q)$  solves
%\eqref{NSR} and satisfies  \eqref{uq-tigh}--\eqref{et} with $e(t)$ replaced by $\widetilde{e}(t)$. Then   if
%\begin{align*}
 %\uql=\tuql, \quad\uqnl=\tuqnl, \quad \RR_{q } ={\tRR}_{q },\quad\text{on}\quad [0, \tfrac{T}{4}+\lambda^{-1}_q],
%\end{align*}
%then we have
%\begin{align*}
 %\uqql=\tuqql, \quad\uqqnl=\tuqqnl, \quad \RR_{q+1 } ={\tRR}_{q+1},\quad\text{on}\quad [0, \tfrac{T}{4}+\lambda^{-1}_{q+1}].
%\end{align*}
%\end{proposition}

\section{Proof of Proposition \ref{iteration}}\label{proof-1}
In this section, we prove Proposition~\ref{iteration}. Without loss of generality, we consider the case where $m$ is even, i.e., $m=2q$ for some $q\in\NN$. Our  purpose is to construct  $w_{2q}$, which consists of three components: the heat-dominated Fourier mode flow $\wpp_{2q} $, the inverse cascade-dominated flow $\wss_{2q}$ and the perturbation flow $\wtq$.

\subsection{Construction of  $\wpp_{2q}$ and $\wss_{2q}$.}\label{Con-wpws}
Let $\mathbb{T} = \mathbb{R} / 2\pi\mathbb{Z}$ denote the one-dimensional torus. We consider a smooth, even function $\phi: \mathbb{T} \to \mathbb{R}$ with $\int_{\TTT}\phi^2\dd x=1$ and whose spatial support satisfies
\[
\operatorname{spt}_x \phi \subseteq \bigcup_{m \in \mathbb{Z}} \bigl([-\tfrac{1}{100}, \tfrac{1}{100}] + 2\pi m\bigr).
\]
For each frequency parameter $\lambda_l$ and vectors $k\in \Lambda$ defined in Lemma \ref{first S}, we define the building blocks
\[
\phi_{k,l}(x) = \phi\bigl(  \lambda^{1/4}_{l} \, k \cdot x \bigr), \qquad l \in \mathbb{N}^{+}, \, k \in \Lambda. 
\]
 Moreover, $\mathbb{P}_{= 0}(\phi^2_{k,l})=1.$ 
We denote 
\begin{align}\label{def-Omega}
\Omega_{k,l} := \operatorname{spt} \, \phi_{k,l}, \qquad \widetilde{\Omega}_{k,l} := \bigl\{ x : d(x, \Omega_{k,l}) < \tfrac{1}{100} \lambda_l^{-1/4} \bigr\},\,\text{for}\,\,l \in \mathbb{N}^+,\,k \in \Lambda,    
\end{align}
 and 
\[
\Omega_{2q-1} = \bigcap_{1 \le l \le 2q-1} \, \bigcup_{k \in \Lambda} \Omega_{k,l}, \qquad
\widetilde{\Omega}_{2q-1} = \bigcap_{1 \le l \le 2q-1} \, \bigcup_{k \in \Lambda} \widetilde{\Omega}_{k,l}.
\]
Finally, we introduce a smooth cutoff function $\chi_{2q-1}(x)$ satisfying
\begin{align}\label{def-chi}
\chi_{2q-1}(x)=1 \,\text{if}\,x\in \Omega_{2q-1}; \quad \chi_{2q-1}(x)=0 \,\text{if}\,\, x\notin\widetilde{\Omega}_{2q-1}; \quad \|\chi_{2q-1}\|_{C^N} \lesssim \lambda^{N/4}_{2q-1}.
\end{align}

\noindent \textbf{Amplitudes.}
Using the coefficients $a_k$
  in Lemma~\ref{first S}, we now define the amplitudes $a_{(k,2q)}$. Let  $\psi_{\ell_{2q-1}}$ and $\varphi_{\ell_{2q-1}}$ be the mollifiers defined in Definition \ref{e:defn-mollifier-t}. The amplitude associated with the direction $k \in \Lambda$ is given by
\begin{align}\label{def-akq}
a_{(k,2q)}(t,x) =& (2000C_0)^{1/2} \int_{t}^{t+\ell_{2q-1}} \Bigl( a_k \bigl( \mathrm{Id} + \tfrac{ \mathcal{R}w^{(p)}_{2q-1} }{ 1000C_0} \bigr) \ast \psi_{\ell_{2q-1}} \Bigr)(x,s) \, \varphi_{\ell_{2q-1}}(t-s) \, \dd s\notag\\
=:& (2000C_0)^{1/2}\big(a_k \bigl( \mathrm{Id} + \tfrac{ \mathcal{R}w^{(p)}_{2q-1} }{ 1000C_0} \bigr) \big)\ast_{t,x}{\psi}_{\ell_{2q-1}}\varphi_{\ell_{2q-1}},
\end{align}
where $C_0 > 0$ is a universal constant, and $\ell_{2q-1}$ is defined by \eqref{def-lq}. From the construction of the amplitude and the building blocks, it is easy to verify the following estimates.

\begin{proposition}\label{def-ak}
For any integer $N \ge 0$, the amplitude functions and building blocks satisfy
    \begin{align*}
        &\|a_{(k,2q)}\|_{L^\infty_t C^N} \lesssim C_0^{1/2} \, \ell^{-N}_{2q-1},\,\,\|\phi_{k,2q}\|_{C^N} \lesssim \lambda^{N/4}_{2q}.
    \end{align*}
\end{proposition}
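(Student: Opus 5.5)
The building-block estimate is immediate, so the plan is mainly about the amplitude bound. Since $\phi_{k,2q}(x)=\phi(\lambda_{2q}^{1/4}k\cdot x)$ with $|k|=1$, the chain rule gives $\partial^\sigma\phi_{k,2q}=\lambda_{2q}^{|\sigma|/4}(k^{\sigma})\,\phi^{(|\sigma|)}(\lambda_{2q}^{1/4}k\cdot x)$. Hence $\|\phi_{k,2q}\|_{C^N}\lesssim \lambda_{2q}^{N/4}\|\phi\|_{C^N}$, with the implicit constant depending only on $N$ and the fixed profile $\phi$.

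For the amplitude, write $a_{(k,2q)}=(2000C_0)^{1/2}\,G\ast_{t,x}\psi_{\ell_{2q-1}}\varphi_{\ell_{2q-1}}$ with $G(t,x)=a_k\bigl(\mathrm{Id}+\mathcal R\wpp_{2q-1}/(1000C_0)\bigr)$. The first step is to check that $G$ is well defined and uniformly bounded.
\begin{itemize}
\item By \eqref{Rwj} with $N=0$, $\|\mathcal R\wpp_{2q-1}\|_{L^\infty_{t,x}}\le C_0^{3/4}$.
\item So the argument of $a_k$ lies within $C_0^{-1/4}/1000\le 1/1000$ of $\mathrm{Id}$ once $C_0\ge1$.
\item By Lemma~\ref{first S}, $a_k$ is smooth on a fixed neighbourhood of $\mathrm{Id}$ in the symmetric matrices. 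We assume that lemma supplies such a ball of radius at least $1/1000$; this is a hypothesis we are relying on rather than something verified here.
\item Consequently $\|G\|_{L^\infty_{t,x}}\lesssim 1$, with the constant set by $\sup|a_k|$ on that ball.
\end{itemize}

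The second step places all derivatives on the mollifier. By Definition~\ref{e:defn-mollifier-t}, $\psi_{\ell}(x)=\ell^{-2}\psi(x/\ell)$ and $\varphi_\ell(t)=\ell^{-1}\varphi(t/\ell)$ are standard mollifiers. Then $\partial^\sigma_x(G\ast\psi_\ell\varphi_\ell)=G\ast(\partial^\sigma\psi_\ell)\varphi_\ell$, and Young's inequality gives
\[
\|\partial^\sigma_x (G\ast_{t,x}\psi_\ell\varphi_\ell)\|_{L^\infty_{t,x}}\le \|G\|_{L^\infty_{t,x}}\|\partial^\sigma\psi_\ell\|_{L^1}\|\varphi_\ell\|_{L^1}\lesssim \ell^{-|\sigma|}.
\]
The time integral is over $[t,t+\ell_{2q-1}]$, where $G$ is defined for all $s\ge0$, so no boundary issue arises at $t=0$. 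Multiplying by the prefactor $(2000C_0)^{1/2}$ yields $\|a_{(k,2q)}\|_{L^\infty_tC^N}\lesssim C_0^{1/2}\ell_{2q-1}^{-N}$, uniformly in $q$.

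The only delicate point is the first step: the composition must stay inside the domain of smoothness of $a_k$ uniformly in $q$. This is exactly what the normalisation by $1000C_0$ and the bound \eqref{Rwj} secure. Everything else is routine mollification, and no higher regularity of $\mathcal R\wpp_{2q-1}$ is needed. That matters because derivatives of $\mathcal R\wpp_{2q-1}$ cost powers of $\lambda_{2q-1}$, which would be far worse than $\ell_{2q-1}^{-1}$ only in the sense of being a different scale, and the mollification bound above sidesteps them entirely.
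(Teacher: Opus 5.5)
Your argument is correct and is the routine verification the paper leaves implicit (``easy to verify from the construction''). It uses the chain rule for $\phi_{k,2q}$; for $a_{(k,2q)}$ it combines the uniform bound on $a_k\bigl(\mathrm{Id}+\mathcal{R}w^{(\mathrm p)}_{2q-1}/(1000C_0)\bigr)$, obtained from \eqref{Rwj} with $N=0$ and Lemma~\ref{first S}, with placing every derivative on $\psi_{\ell_{2q-1}}$.

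Your closing remark has the comparison backwards. Since $\ell_{2q-1}^{-1}=\lambda_{2q-1}^{50}$, derivatives of $\mathcal{R}w^{(\mathrm p)}_{2q-1}$ would actually be cheaper than $\ell_{2q-1}^{-1}$. The point of putting derivatives on the mollifier is that \eqref{Rwj} controls only $N\le 4$ derivatives, whereas the claim is for all $N$. Also take $C_0$ large enough that the argument of $a_k$ stays in a compact sub-ball of $B_{10^{-3}}(\mathrm{Id})$, so that $\sup|a_k|$ is finite.
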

Let $\nabla^{\perp}=(\partial_{2}, -\partial_{1})$.  We construct the heat-dominated Fourier mode flow:
\begin{equation}\label{def-wpq-1}
\begin{aligned}
\wpp_{2q} = &-\lambda^{-3}_{2q} e^{-\lambda^2_{2q }t} \, \Delta \, \nabla^{\perp} \Bigl( \sum_{k \in \Lambda} a_{(k,2q)} \chi_{2q-1} \phi_{k,2q} \, \operatorname{curl} \bigl( e^{\mathrm{i}\lambda_{2q } k \cdot x} \bar{k} \bigr) \Bigr) \\
=&-\lambda^{-3}_{2q }e^{-\lambda^2_{2q }t}\Div(\nabla+\nabla^{\TT}){\nabla^{\perp}}\Big(\sum_{k\in\Lambda}a_{(k,2q)} \chi_{2q-1}\phi_{k,2q}\curl \big( e^{\ii\lambda_{2q }k\cdot x}\bar{k}\big)\Big)\\
 =&\Div (\mathcal{R}\wpp_{2q}).
\end{aligned}
\end{equation}
 Moreover, for $k \perp \bar{k}$ and $|k|=1$, we have
 $$-\Delta( e^{\mathrm{i}\lambda_{2q } k \cdot x} \bar k ) =\nabla^{\perp}\curl ( e^{\mathrm{i}\lambda_{2q } k \cdot x} \bar k ) =\lambda^2_{2q} e^{\mathrm{i}\lambda_{2q } k \cdot x} \bar k,$$
 so that we  decompose $\wpp_{2q}$ into the main and remainder parts:
\begin{align}
\wpp_{2q}
=& \underbrace{ \lambda_{2q} e^{-\lambda^2_{2q}t} \sum_{k \in \Lambda} a_{(k,2q)} \chi_{2q-1} \phi_{k,2q} e^{\mathrm{i}\lambda_{2q } k \cdot x} \bar{k} }_{\wpm_{2q}} \notag \\
& - \lambda^{-1}_{2q } e^{-\lambda^2_{2q}t} \sum_{k \in \Lambda} \Delta\bigl( a_{(k,2q)} \chi_{2q-1} \phi_{k,2q} \bigr) e^{\mathrm{i}\lambda_{2q } k \cdot x} \bar{k} \notag \\
& - 2\mathrm{i} e^{-\lambda^2_{2q }t} \sum_{k \in \Lambda} \nabla\bigl( a_{(k,2q)} \chi_{2q-1} \phi_{k,2q} \bigr) \cdot k \, e^{\mathrm{i}\lambda_{2q } k \cdot x} \bar{k} \notag \\
& - \lambda^{-3}_{2q } e^{-\lambda^2_{2q }t} \Delta \Bigl( \sum_{k \in \Lambda} \nabla^{\perp}\bigl( a_{(k,2q)} \chi_{2q-1} \phi_{k,2q} \bigr) \, \operatorname{curl}\bigl( e^{\mathrm{i}\lambda_{2q } k \cdot x} \bar{k} \bigr) \Bigr) \notag \\
=:& \,\, \wpm_{2q} + \wpr_{2q}. \label{dec-wp}
\end{align}

Next, we construct $\wss_{2q}$ to cancel the low frequency part  of the quadratic interaction $\Div(\wpm_{2q} \otimes \wpm_{2q})$. To be more precise, we set
\begin{equation}\label{wttq}
\begin{aligned}
\wss_{2q} = \frac{1}{2} \chi^2_{2q-1} \sum_{k \in \Lambda} \Div \Bigl( a^2_k \bigl( \mathrm{Id} + \tfrac{ \mathcal{R}\wpp_{2q-1} }{ 1000C_0} \bigr) (2000C_0) \, \bar{k} \otimes \bar{k} \Bigr) e^{-2\lambda^2_{2q }t}.
\end{aligned}
\end{equation}
Utilizing Lemma \ref{first S}  and  $\chi_{2q-1}\wpp_{2q-1}=\wpp_{2q-1}$, we obtain 
\begin{align}
\wss_{2q} =& \frac{1}{2} \chi^2_{2q-1} \Div\bigl( \mathrm{Id} + \tfrac{\mathcal{R}\wpp_{2q-1}}{1000C_0} \bigr) (2000C_0) e^{-2\lambda^2_{2q }t} \notag \\
=& \chi^2_{2q-1} \, \wpp_{2q-1} \, e^{-2\lambda^2_{2q }t} = \wpp_{2q-1} \, {e^{-2\lambda^2_{2q }t}}. \label{def-ws}
\end{align}
This equality shows that
\begin{align}\label{wss2}
\wpp_{2q-1}(0,x) = \wss_{2q}(0,x) \quad \text{and} \quad \partial_t \wss_{2q} = -2\lambda^2_{2q } e^{-2\lambda^2_{2q }t} \wpp_{2q-1} + \partial_t \wpp_{2q-1} \, e^{-2\lambda^2_{2q }t}.
\end{align}

\subsection{Key estimates for $\wpp_{2q}$ and $\wss_{2q}$.}\label{est-wpws}

\begin{proposition}[Estimates for $\wpp_{2q}$ and $\wss_{2q}$]\label{Est-wpws}
There exists a universal constant $C_0$ such that 
\begin{align}
&\|\partial^l_t \mathcal{R}w^{(\rm p)}_{2q}\|_{L^{\infty}_{t}C^N} \leq C^{3/4}_0 \lambda^{N+2l}_{2q}, \quad \|\mathcal{R}w^{(\rm s)}_{2q}\|_{L^{\infty}_{t}C^N} \leq C^{3/4}_0 \lambda^N_{2q-1}, \quad \,\, \,\, 0\le N\le 4;l=0,1; \label{wpq-wss1}\\
&\|w^{(\rm p)}_{2q}\|_{L^1_tC^N} \leq C^{3/4}_0 \lambda^{N-1}_{2q}, \qquad \,\,\quad\|w^{(\rm s)}_{2q}\|_{L^1_tC^N} \leq C^{3/4}_0 \lambda^{-2}_{2q} \lambda^{N+1}_{2q-1}, \quad 0\le N\le 2, \label{wpq-wss2}\\
&\|w^{(\rm p)}_{2q}\|_{L^{\infty}_{t}C^N} \leq C^{3/4}_0 \lambda^{N+1}_{2q}, \qquad \,\quad\|w^{(\rm s)}_{2q}\|_{L^{\infty}_{t}C^N} \leq C^{3/4}_0 \lambda^{N+1}_{2q-1}, \qquad\, 0\le N\le 3. \label{wpq-wss3}
\end{align}
\end{proposition}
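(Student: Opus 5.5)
Everything here is a direct computation from the explicit formulas \eqref{def-wpq-1}, \eqref{dec-wp} and \eqref{def-ws}, together with the inductive hypotheses \eqref{Rwj}--\eqref{wp-wsL1} at level $2q-1$. The key point is that $C_0$ may be chosen large, so every bound of the form $C C_0^{1/2}$ with $C$ absolute is $\le C_0^{3/4}$. We also use the parameter hierarchy $\lambda_{2q}=\lambda_{2q-1}^b$, $\ell_{2q-1}^{-1}=\lambda_{2q-1}^{50}\ll\lambda_{2q}^{1/4}$ and $\lambda_{2q-1}^{1/4}\ll\lambda_{2q}$.

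\textbf{Step 1: the amplitude.} By Proposition~\ref{def-ak}, $\|a_{(k,2q)}\|_{L^\infty_tC^N}\lesssim C_0^{1/2}\ell_{2q-1}^{-N}$, and by \eqref{def-chi}, $\|\chi_{2q-1}\|_{C^N}\lesssim\lambda_{2q-1}^{N/4}$. The amplitude is well defined because $a_k$ is smooth near $\mathrm{Id}$ (Lemma~\ref{first S}) and $\|\mathcal R\wpp_{2q-1}\|_{L^\infty}/(1000C_0)\le C_0^{-1/4}/1000$ by \eqref{Rwj}. For the time derivative, the mollifier in time gives $\|\partial_t a_{(k,2q)}\|_{C^N}\lesssim C_0^{1/2}\ell_{2q-1}^{-N-1}$. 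Since $\ell_{2q-1}^{-1}\le \lambda_{2q}^{1/4}$, the product $A_k:=a_{(k,2q)}\chi_{2q-1}\phi_{k,2q}$ satisfies $\|A_k\|_{C^N}\lesssim C_0^{1/2}\lambda_{2q}^{N/4}$ and $\|\partial_tA_k\|_{C^N}\lesssim C_0^{1/2}\lambda_{2q}^{(N+1)/4}$.

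\textbf{Step 2: estimates for $\wpp_{2q}$.} From \eqref{def-wpq-1}, $\mathcal R\wpp_{2q}=-\lambda_{2q}^{-3}e^{-\lambda_{2q}^2t}(\nabla+\nabla^{\TT})\nabla^\perp\big(\sum_k A_k\,\curl(e^{\ii\lambda_{2q}k\cdot x}\bar k)\big)$. The function $\curl(e^{\ii\lambda_{2q}k\cdot x}\bar k)$ has size $\lambda_{2q}$, and each derivative costs at most $\lambda_{2q}$, since it hits either the phase or $A_k$ with cost $\lambda_{2q}^{1/4}$. This gives $\|\mathcal R\wpp_{2q}\|_{L^\infty_tC^N}\lesssim C_0^{1/2}\lambda_{2q}^{N}$. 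A time derivative either hits $e^{-\lambda_{2q}^2t}$, producing a factor $\lambda_{2q}^2$, or hits $A_k$, costing only $\lambda_{2q}^{1/4}$; hence the $l=1$ bound with $\lambda_{2q}^{N+2}$. Similarly, $\|\wpp_{2q}\|_{L^\infty_tC^N}\lesssim C_0^{1/2}\lambda_{2q}^{N+1}$, with the main term coming from $\wpm_{2q}$ in \eqref{dec-wp}. For the $L^1_t$ bounds, integrate $\int_0^\infty e^{-\lambda_{2q}^2t}\,dt=\lambda_{2q}^{-2}$ to get $\|\wpp_{2q}\|_{L^1_tC^N}\lesssim C_0^{1/2}\lambda_{2q}^{N-1}$. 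Absorbing the constants with $C_0$ large yields the $\wpp$ parts of \eqref{wpq-wss1}--\eqref{wpq-wss3}.

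\textbf{Step 3: estimates for $\wss_{2q}$.} By \eqref{def-ws}, $\wss_{2q}=\wpp_{2q-1}e^{-2\lambda_{2q}^2t}$, so $\mathcal R\wss_{2q}=e^{-2\lambda_{2q}^2t}\mathcal R\wpp_{2q-1}$. Then \eqref{Rwj} at level $2q-1$ gives $\|\mathcal R\wss_{2q}\|_{L^\infty_tC^N}\le C_0^{3/4}\lambda_{2q-1}^N$, and \eqref{wp-N} gives $\|\wss_{2q}\|_{L^\infty_tC^N}\le C_0^{3/4}\lambda_{2q-1}^{N+1}$, both directly since $e^{-2\lambda_{2q}^2t}\le 1$. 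For the $L^1_t$ bound, $\|\wss_{2q}\|_{L^1_tC^N}\le \|\wpp_{2q-1}\|_{L^\infty_tC^N}\int_0^\infty e^{-2\lambda_{2q}^2t}\,dt\le \tfrac12C_0^{3/4}\lambda_{2q}^{-2}\lambda_{2q-1}^{N+1}$.

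\textbf{Main obstacle.} The only delicate point is keeping the constants at exactly $C_0^{3/4}$ rather than $CC_0^{3/4}$. For $\wss_{2q}$ this is automatic because the relevant bounds are inherited verbatim from level $2q-1$. For $\wpp_{2q}$ we must check that the amplitude prefactor is $(2000C_0)^{1/2}$ times quantities bounded independently of $C_0$. This requires that the $a_k$ from Lemma~\ref{first S} are evaluated only in a fixed small neighbourhood of $\mathrm{Id}$. That holds uniformly in $q$ because \eqref{Rwj} bounds $\mathcal R\wpp_{2q-1}$ by $C_0^{3/4}$, which the division by $1000C_0$ renders small. Once this is in place, $C C_0^{1/2}\le C_0^{3/4}$ for $C_0\ge C^4$.
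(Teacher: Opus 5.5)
Your proposal is correct and follows essentially the same route as the paper. Both arguments count derivatives on the explicit formula \eqref{def-wpq-1}, with each derivative costing at most $\lambda_{2q}$ (using $\ell_{2q-1}^{-1},\lambda_{2q-1}^{1/4}\le\lambda_{2q}^{1/4}$), gain $\lambda_{2q}^{-2}$ from integrating $e^{-\lambda_{2q}^2t}$ for the $L^1_t$ bounds, absorb $CC_0^{1/2}\le C_0^{3/4}$, and inherit the $\wss_{2q}$ bounds from \eqref{Rwj}--\eqref{wp-N} through the identity \eqref{def-ws}. The only slip is minor: $\lambda_{2q}=\lambda_{2q-1}^b$ is not exact because of the $N_\Lambda^4$ prefactor in \eqref{def-lq}, but the inequalities you actually use still hold once $a$ is large.
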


\begin{proof}
 Starting from the definition \eqref{def-wpq-1}, we have
\begin{align}
\|\mathcal{R}\wpp_{2q}\|_{L^\infty_{t}C^N}
\le& \lambda^{-3}_{2q } \Bigl\| e^{-\lambda^2_{2q }t} (\nabla+\nabla^{\mathsf{T}}) \Bigl( \sum_{k \in \Lambda} {\nabla^{\perp}}(a_{(k,2q)} \chi_{2q-1} \phi_{k,2q}) \operatorname{curl} \bigl( \mathrm{i} e^{\mathrm{i}\lambda_{2q } k \cdot x} \bar{k} \bigr) \Bigr) \Bigr\|_{L^\infty_{t}C^N} \nonumber \\
&+ \lambda^{-1}_{2q } \Bigl\| e^{-\lambda^2_{2q }t} (\nabla+\nabla^{\mathsf{T}}) \Bigl( \sum_{k \in \Lambda} a_{(k,2q)} \chi_{2q-1} \phi_{k,2q} e^{\mathrm{i}\lambda_{2q } k \cdot x} \bar{k} \Bigr) \Bigr\|_{L^\infty_{t}C^N}. \label{est-Rw}
\end{align}
Thanks to \eqref{def-chi} and Proposition \ref{def-ak}, we have
 \begin{align*}
   &\lambda^{-3}_{2q}\Big\|e^{-\lambda^2_{2q }t}  (\nabla+\nabla^{\TT})  \Big(\sum_{k\in\Lambda}{\nabla^{\perp}}(\partial^l_ta_{(k,2q)} \chi_{2q-1}\phi_{k,2q})\curl \big(\ii e^{\ii\lambda_{2q }k\cdot x}\bar{k}\big)\Big)\Big\|_{L^\infty_{t}C^N}\\
   \lesssim&\sum_{k\in\Lambda}\lambda^{-2}_{2q}\|a_{(k,2q)} \chi_{2q-1}\phi_{k,2q}\|_{L^\infty_tC^{N+3}}+C^{1/2}_0\lambda^{N-1}_{2q}\lesssim C^{1/2}_0\lambda^{N-1}_{2q}
 \end{align*}
 and 
  \begin{align*}
&\lambda^{-1}_{2q}\Big\|e^{-\lambda^2_{2q }t}  (\nabla+\nabla^{\TT})  \Big(\sum_{k\in\Lambda}a_{(k,2q)} \chi_{2q-1}\phi_{k,2q} e^{\ii\lambda_{2q }k\cdot x}\bar{k}\Big)\Big\|_{L^\infty_{t}C^N}\\
\lesssim&\sum_{k\in\Lambda}C_0^{1/2}\lambda^{-1}_{2q}\|a_{(k,2q)}\chi_{2q-1}\phi_{k,2q}\|_{L^\infty C^{N+1}}+C^{1/2}_0\lambda^N_{2q}\\
\lesssim&C_0^{1/2}\lambda^{\frac{N}{4}-\frac{3}{4}}_{2q}+C^{1/2}_0\lambda^N_{2q}\lesssim C^{1/2}_0\lambda^N_{2q}.
   \end{align*}
Inserting these estimates into \eqref{est-Rw} yields $\|\mathcal{R}\wpp_{2q}\|_{L^\infty_{t}C^N} \lesssim C^{1/2}_0 \lambda^{N}_{2q}$. In the same way, we obtain the estimate for  $\|\partial_t\mathcal{R}\wpp_{2q}\|_{L^\infty_{t}C^N}$.

By the definition of $\wpp_{2q}$ in \eqref{def-wpq-1}, we have, for $0\le N\le 2$,
\begin{align*}
    \|\wpp_{2q}\|_{L^1_tC^N}\le& \lambda^{-3}_{2q}\|e^{-\lambda^2_{2q}t}\|_{L^1_t}\Big(\|a_{(k,2q)}\|_{L^\infty_tC^{N+3}}+\|\chi_{2q-1}\|_{C^{N+3}}+\|\phi_{k,2q}\|_{C^{N+3}}+\|e^{\ii\lambda_{2q}k\cdot x}\|_{C^{N+4}}\Big)\\
    \lesssim&\lambda^{-5}_{2q}(C_0^{1/2}\ell^{-N-3}_{2q-1}+\lambda^{N+4}_{2q})\le C^{3/4}_0\lambda^{N-1}_{2q},
\end{align*}
and for $0\le N\le 3$,
\begin{align*}
\|\wpp_{2q}\|_{L^\infty_tC^N}\lesssim&\lambda^{-3}_{2q}\sum_{k\in\Lambda}\|a_{(k,2q)}\chi_{2q-1}\phi_{k,2q}\curl(e^{\ii\lambda_{2q}k\cdot x}\bar{k})\|_{L^\infty C^{N+3}}\\
\lesssim&\lambda^{-2}_{2q}\sum_{k\in\Lambda}(\|a_{(k,2q)}\chi_{2q-1}\phi_{k,2q}\|_{L^\infty_tC^{N+3}}+C^{1/2}_0\lambda^{N+3}_{2q})\\
\le&C^{3/4}_0\lambda^{N+1}_{2q}.
\end{align*}
In conclusion, we prove the estimates for $\wpp_{2q}$ in \eqref{wpq-wss1}--\eqref{wpq-wss3}. The bounds for $\wss_{2q}$ in \eqref{wpq-wss1}-- \eqref{wpq-wss3} follow directly from the identity \eqref{def-ws} and the inductive bounds  for $\wpp_{2q-1}$ in \eqref{Rwj} and \eqref{wp-N}.
Therefore, we complete the proof of Proposition \ref{Est-wpws}.
\end{proof}
 \begin{proposition}\label{cedu}For $1\le r<\infty$, we have
 \begin{align*}
 \|w^{\rm (p)}_{2q}\|_{L^{2}_tL^r \cap L^{1}  W^{1,r}}\le C^{\frac{3}{4}}_02^{-\frac{2q}{r}},\,\,\|w^{\rm (s)}_{2q}\|_{L^{2}_tL^r \cap L^{1}  W^{1,r}}\leq C_0^{\tfrac{3}{4}} 2^{-\tfrac{2q}{r}}.    
 \end{align*}
 \end{proposition}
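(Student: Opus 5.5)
The plan is to combine the $L^\infty$ bounds of Proposition~\ref{Est-wpws} with two further ingredients: a smallness estimate on the measure of the spatial supports, and the explicit time factors $e^{-\lambda_{2q}^2t}$ and $e^{-2\lambda_{2q}^2t}$. For any $f$ supported in a set $S\subset\TTT^2$ we have $\|f\|_{L^r}\le |S|^{1/r}\|f\|_{L^\infty}$. So both bounds reduce to
\[
|\operatorname{spt}_x \wpp_{2q}|\lesssim 2^{-2q},\qquad |\operatorname{spt}_x\wss_{2q}|\lesssim 2^{-2q},
\]
together with $O(1)$ bounds on the relevant time integrals of the $L^\infty_x$ norms. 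The constants must be small enough to be absorbed into $C_0^{3/4}$, which is legitimate because $C_0$ is large.

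\textbf{Step 1 (support).} Every term in \eqref{def-wpq-1} is a differential operator applied to $a_{(k,2q)}\chi_{2q-1}\phi_{k,2q}e^{\ii\lambda_{2q}k\cdot x}$. Hence
\[
\operatorname{spt}_x\wpp_{2q}\subseteq \widetilde\Omega_{2q-1}\cap\bigcup_{k\in\Lambda}\Omega_{k,2q}\subseteq\bigcap_{1\le l\le 2q}\bigcup_{k\in\Lambda}\widetilde\Omega_{k,l}.
\]
Each set $\bigcup_k\widetilde\Omega_{k,l}$ is a union of $|\Lambda|$ families of parallel strips. The strips have width at most $\tfrac{4}{100}\lambda_l^{-1/4}$ and period $2\pi\lambda_l^{-1/4}$ in the direction $k$, so each family occupies a fraction at most $\tfrac{1}{100}$ of the torus. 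Since $\Lambda$ is a fixed finite set, the total fraction is at most $\tfrac{|\Lambda|}{100}$, which I will assume is $\le\tfrac18$ (or adjust the support width of $\phi$ accordingly).

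The intersection over scales is then estimated inductively in $l$. The set at level $l-1$ is a union of strips of width $\gtrsim\lambda_{l-1}^{-1/4}$, which is enormous compared with the period $\lambda_l^{-1/4}$ of the level-$l$ strips because $\lambda_l=\lambda_{l-1}^b$. Intersecting with level $l$ therefore multiplies the measure by at most $\tfrac18(1+O(\lambda_{l-1}^{1/4}\lambda_l^{-1/4}))\le\tfrac14$. The error comes from boundary layers: the boundary of the level-$(l-1)$ set has length $\lesssim\lambda_{l-1}^{1/4}$, and the relevant layer has width $\lambda_l^{-1/4}$.

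This gives $|\operatorname{spt}\wpp_{2q}|\le 4^{-2q}$. Likewise, by \eqref{def-ws}, $\operatorname{spt}\wss_{2q}=\operatorname{spt}\wpp_{2q-1}$, which has measure $\le 4^{-(2q-1)}\le 2^{-2q}$. I expect this measure count to be the main obstacle: one must check that the oblique strips at successive scales, with rational directions $k$ and periodicity on $\TTT^2$, are ``independent'' enough that the boundary-layer errors do not accumulate. The super-exponential growth of $\lambda_l$ should make the product $\prod_l(1+O(\lambda_{l-1}^{1/4}\lambda_l^{-1/4}))$ bounded.

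\textbf{Step 2 (time integrability).} By \eqref{wpq-wss3} and the explicit structure of \eqref{dec-wp}, we have $\|\wpp_{2q}(t)\|_{L^\infty}\lesssim C_0^{1/2}\lambda_{2q}e^{-\lambda_{2q}^2t}$ and $\|\nabla\wpp_{2q}(t)\|_{L^\infty}\lesssim C_0^{1/2}\lambda_{2q}^2e^{-\lambda_{2q}^2t}$. This is the same computation as in Proposition~\ref{Est-wpws}, keeping the factor $e^{-\lambda_{2q}^2t}$ instead of integrating it. Now $\|\lambda_{2q}e^{-\lambda_{2q}^2t}\|_{L^2_t}=2^{-1/2}$ and $\|\lambda_{2q}^2e^{-\lambda_{2q}^2t}\|_{L^1_t}=1$. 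Combining with Step 1 gives
\[
\|\wpp_{2q}\|_{L^2_tL^r\cap L^1_tW^{1,r}}\lesssim C_0^{1/2}4^{-2q/r}\le C_0^{3/4}2^{-2q/r}.
\]

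\textbf{Step 3 ($\wss_{2q}$).} Write $\wss_{2q}=\wpp_{2q-1}e^{-2\lambda_{2q}^2t}$ and use \eqref{wp-N}, which gives $\|\wpp_{2q-1}\|_{L^\infty_tC^1}\le C_0^{3/4}\lambda_{2q-1}^2$. Then
\[
\|\wss_{2q}\|_{L^2_tL^r}\le C_0^{3/4}\lambda_{2q-1}\lambda_{2q}^{-1}|\operatorname{spt}\wpp_{2q-1}|^{1/r},
\]
\[
\|\wss_{2q}\|_{L^1_tW^{1,r}}\le C_0^{3/4}\lambda_{2q-1}^2\lambda_{2q}^{-2}|\operatorname{spt}\wpp_{2q-1}|^{1/r}.
\]
The factors $\lambda_{2q-1}/\lambda_{2q}$ are tiny, and the support measure from Step 1 supplies $2^{-2q/r}$. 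This yields the claimed bound for $\wss_{2q}$ with room to spare.
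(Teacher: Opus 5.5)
Your proposal is correct and follows essentially the paper's route. The paper likewise bounds the measure of $\widetilde\Omega_{2q-1}$ geometrically, multiplies by the $L^\infty$ bounds, and integrates the explicit factor $e^{-\lambda_{2q}^2t}$ in time. Your boundary-layer count for the measure decay is more careful than the paper's parallelogram count, though it is best run additively, as $|A_l|\le\tfrac18|A_{l-1}|+C\lambda_{l-1}^{1/4}\lambda_l^{-1/4}$.

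One small correction: $\wpp_1=u_1$ has full support, so your bound $|\operatorname{spt}\wpp_{2q-1}|\le 4^{-(2q-1)}$ fails at $q=1$. This is harmless, because, as your Step 3 already shows, the factor $\lambda_{2q-1}/\lambda_{2q}$ alone beats $2^{-2q/r}$ for $\wss_{2q}$ without any support information.
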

 \begin{proof}
 From the definition of $\wpp_{2q}$ in \eqref{def-wpq-1}, we have
 \begin{align*}
     \spt_x\wpp_{2q}=\spt_x\chi_{2q-1}\subseteq\widetilde{\Omega}_{2q-1}=\bigcap_{\substack{1\le j \le {2q-1}}} \bigcup_{\substack{k \in \Lambda}} \widetilde{\Omega}_{j,k}.
 \end{align*}
where $\widetilde{\Omega}_{j,k}$ is defined in \eqref{def-Omega}. 
According to the definition of \(\phi_{k,j}\), one easily verifies that the support of \(\phi_{k,j}\) consists of \(\lambda_j^{1/4}\) parallelograms on the periodic domain $\TTT^2$, and each of size is less than \(\frac{1}{4}\lambda_j^{-1/4}\). Consequently, we obtain  
\[
|\Omega_{j}| \le \frac{1}{4}\,\lambda_{j}^{-1/4}\; \frac{|\Omega_{j-1}|}{|\TTT^2|}\; \lambda_j^{1/4}=\frac{1}{4}{|\Omega_{j-1}|}\le4^{-j}|\TTT^2|.
\]
Similarly, one deduces that
\begin{align*}
    |\widetilde{\Omega}_{2q-1}|\le 2^{-2q+1}|\TTT^2|.
\end{align*}
Noting that
 $\supp  \chi_{2q-1}\subseteq\widetilde{\Omega}_{2q-1}$, we deduce that
 \begin{align*}
 &\|\wpp_{2q}\|_{L^{2}_tL^r \cap L^{1}_t  W^{1,r}}\\
 \leq&
 \Big\|\lambda^{-3}_{2q }e^{-\lambda^2_{2q }t}\Delta{\nabla^{\perp}}\Big(\sum_{k\in\Lambda}a_{(k,2q)} \chi_{2q-1}\phi_{k,2q}\curl \big( e^{\ii\lambda_{2q }k\cdot x}\bar{k}\big)\Big)\Big\|_{L^2L^r}\\
 &+\Big\|\lambda^{-3}_{2q }e^{-\lambda^2_{2q }t}\Delta{\nabla^{\perp}}\Big(\sum_{k\in\Lambda}a_{(k,2q)} \chi_{2q-1}\phi_{k,2q}\curl \big( e^{\ii\lambda_{2q }k\cdot x}\bar{k}\big)\Big)\Big\|_{ L^{1}_t  W^{1,r}}
 \\
 \lesssim&\lambda^{-4}_{2q }\big(\lambda_{2q}\|a_{(k,q)}\|_{L^{\infty}_{t}C^3} \|\chi_{2q-1}\phi_{k,2q}\|_{L^{r} }+\lambda_{2q}C^{1/2}_0 \|\chi_{2q-1}\phi_{k,2q}\|_{W^{3,r} }+C^{1/2}_0\lambda^4_{2q}\|\chi_{2q-1}\phi_{k,2q}\|_{L^{r} }\big)\\
 &+\lambda^{-5}_{2q }\big(\lambda_{2q}\|a_{(k,q)}\|_{L^{\infty}_{t}C^4} \|\chi_{2q-1}\phi_{k,2q}\|_{L^{r} }+\lambda_{2q}C^{1/2}_0 \|\chi_{2q-1}\phi_{k,2q}\|_{W^{4,r} }+C^{1/2}_0\lambda^5_{2q}\|\chi_{2q-1}\phi_{k,2q}\|_{L^{r} }\big)\\
 \lesssim& C^{1/2}_0|\widetilde{\Omega}_{2q-1}|^{\frac{1}{r}}(\lambda^{-3}_{2q}\ell^{-3}_{2q-1}+\lambda^{-\frac{9}{4}}_{2q}+1)\\
 \lesssim& C^{1/2}_02^{-\frac{2q-1}{r}}.
 \end{align*}
 Similarly, we also have $\|\wss_{2q}\|_{_{L^{2}_tL^r \cap L^{1}_t  W^{1,r}}}\le  C^{\frac{3}{4}}_02^{-\frac{2q }{r}} ,$  and thus complete the proof of Proposition \ref{cedu}.
 \end{proof}

 \begin{proposition}\label{BMO}There exists a universal $C_0 $ such that 
\begin{align*}
   \big\|\sum_{l=1}^q(w^{\rm (p)}_{2l}+w^{\rm (s)}_{2l})\big\|_{L^\infty {\rm BMO}^{-1}}\le \frac{1}{16}{C_0}\sum_{m=-1}^{2q}2^{-m}.
 \end{align*}     
 \end{proposition}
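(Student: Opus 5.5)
Since every $w^{\rm (p)}_{2l}$ and $w^{\rm (s)}_{2l}$ is written in divergence form via \eqref{def-R}, I will use the elementary bound $\|\Div F\|_{{\rm BMO}^{-1}}\lesssim \|F\|_{\rm BMO}\lesssim\|F\|_{L^\infty}$ and apply it to the partial sums of $\mathcal R w^{(\rm p)}_{2l}$ and $\mathcal R w^{(\rm s)}_{2l}$. Estimating each term separately with \eqref{wpq-wss1} (at $N=0$) only gives $C_0^{3/4}$ per term. That is not summable in $l$, and it does not produce the factor $2^{-m}$ in the statement. The plan is therefore to exploit the nested supports together with the time factors $e^{-\lambda_{2l}^2 t}$ and $e^{-2\lambda_{2l}^2t}$.

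\textbf{Step 1 (time localization).} Fix $t>0$. Let $l_0$ be the largest index with $\lambda_{2l_0}^2 t\le 1$. For $l>l_0+1$ the factor $e^{-\lambda^2_{2l}t}\le e^{-\lambda_{2l}^2\lambda_{2l_0+2}^{-2}}$ is super-exponentially small, because the frequencies grow like $a^{b^q}$. The terms $l\ge l_0+2$ therefore contribute at most $\sum_l C_0^{3/4}e^{-\lambda_{2l}^2/\lambda_{2l-2}^2}\ll 1$ in $L^\infty$ of $\mathcal R w$. The same holds for $w^{(\rm s)}_{2l}=w^{(\rm p)}_{2l-1}e^{-2\lambda_{2l}^2t}$, using the bound \eqref{Rwj} on $\mathcal R w^{(\rm p)}_{2l-1}$. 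At most two terms, $l=l_0$ and $l_0+1$, have size $O(C_0^{3/4})$, and each of these is bounded directly by $C_0^{3/4}\le \tfrac1{100}C_0$.

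\textbf{Step 2 (the past terms $l<l_0$).} For these terms I will not use $L^\infty$ in $x$. Instead I will estimate the ${\rm BMO}^{-1}$ norm through the Carleson-measure characterization, or more crudely through $\|f\|_{{\rm BMO}^{-1}}\lesssim\|f\|_{\dot B^{-1}_{\infty,\infty}}$ plus the Carleson part. Concretely, I will split the sum into low-frequency and high-frequency pieces with respect to the scale $t$. For $l<l_0$ the heat factor gives $\|w^{(\rm p)}_{2l}(t)\|_{L^\infty}\lesssim C_0^{3/4}\lambda_{2l}e^{-\lambda_{2l}^2t}$, and $\lambda_{2l}^2t\ge\lambda_{2l}^2\lambda_{2l_0}^{-2}$ is then either large or small. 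Where this is not directly useful, I will use the support bound $|\widetilde\Omega_{2l-1}|\le 2^{-2l+1}|\TTT^2|$ from the proof of Proposition \ref{cedu}: on a ball of radius $r$, the Carleson quantity $r^{-2}\int_0^{r^2}\int_B|e^{s\Delta}f|^2$ is controlled by the $L^2$-in-time, $L^r$-in-space bounds of Proposition \ref{cedu}. Taking $r$ large there gives a factor $2^{-2l/r'}$, which yields the geometric weights $2^{-m}$ with $m=2l-1,2l$.

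\textbf{Step 3 (summation).} Adding the contributions gives $C_0^{3/4}\sum_m 2^{-m}$ plus super-exponentially small tails. Choosing $C_0$ large so that $C_0^{3/4}C\le C_0/16$ closes the bound.

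The main obstacle is Step 2, namely obtaining genuine decay $2^{-m}$ in ${\rm BMO}^{-1}$ for the terms that have already "fired". ${\rm BMO}^{-1}$ is not directly controlled by the $L^r$ support smallness. I expect to need the Koch–Tataru Carleson formulation and to check carefully that the intersection structure of the sets $\Omega_{2l-1}$ gives smallness uniformly over balls, not only globally.
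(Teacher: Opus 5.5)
\textbf{There is a genuine gap.} Step 2 carries the entire content of the proposition, and the mechanism you propose for it cannot work.

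\emph{The time localization runs the wrong way.} For $l<l_0$ you have $t\le\lambda_{2l_0}^{-2}$, so $\lambda_{2l}^2t\le\lambda_{2l}^2\lambda_{2l_0}^{-2}\ll 1$, not $\ge$. The heat factors damp only the high levels $l\ge l_0+2$. Every level $l\le l_0+1$ is at full strength, and none has ``fired''. At $t=0$, which the $L^\infty_t$ norm includes and where the common initial datum lives, $l_0=q$ and Step 1 removes nothing.

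\emph{The per-level bound your Step 3 sums is false.} There is no bound $C_0^{3/4}2^{-m}$ in $\BMO^{-1}$ for a single level. Take a ball $B=B(x_0,R)$ with $\lambda_m^{-1}\ll R\ll\lambda_m^{-1/4}$, centred where $\chi_{m-1}=1$ and a single stripe $\phi_{k,m}$ (with $a_k\neq 0$) has unit size. There $e^{s\Delta}\wpp_m(0)\approx e^{-\lambda_m^2 s}\wpp_m(0)$ on $B$, and $|B|^{-1}\int_0^{R^2}\int_B|e^{s\Delta}\wpp_m(0)|^2\gtrsim C_0$, uniformly in $m$.

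\emph{The global facts you invoke cannot help, for the same reason.} The bound $|\widetilde\Omega_{2l-1}|\le 2^{-2l+1}|\TTT^2|$ and Proposition~\ref{cedu} are global. After H\"older on $B$, the normalization by $|B|$ leaves a factor of order $(|\widetilde\Omega_{m-1}|/|B|)^{2/p}$, which gives nothing for small balls. In addition, Proposition~\ref{cedu} controls the space-time function, not $e^{s\Delta}w(t)$ at fixed $t$. Its factor $2^{-2l/r}$ also tends to $1$ as the exponent grows.

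\textbf{What is missing} is a ball-by-ball estimate of the whole sum, built on scale separation and a \emph{local} form of the density bound $|\Omega_j|\le\frac14|\Omega_{j-1}|$. The paper itself defers this step to \cite[Proposition 3.8]{CP}. Your opening reduction $\|\Div F\|_{\BMO^{-1}}\lesssim\|F\|_{\BMO}$ is enough, provided you do not pass to $L^\infty$.

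At fixed $t$ the sum equals $\Div F(t)$, where $F(t)=\sum_{m=1}^{2q}c_m(t)\,\mathcal R\wpp_m(t)$ with $c_{2l}=1$ and $c_{2l-1}=e^{-2\lambda_{2l}^2t}\in[0,1]$. The explicit potential in \eqref{def-wpq-1} is supported in $\widetilde\Omega_{m-1}$ and is of size $O(C_0^{1/2})$ there. Fix a ball $B$ of radius $R$, and let $m_R$ be the first level with $\lambda_{m_R}>R^{-1}$.
\begin{enumerate}
\item Levels $m<m_R$: the mean oscillation on $B$ is at most $R\|\mathcal R\wpp_m\|_{L^\infty_tC^1}\lesssim C_0^{3/4}R\lambda_m$ by \eqref{Rwj}. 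These sum to $O(C_0^{3/4})$ because $\lambda_m$ grows super-geometrically.
\item Level $m_R$: bound it in $L^\infty$ by $C_0^{3/4}$.
\item Levels $m>m_R$: the oscillation is at most $2|B|^{-1}\int_B|\mathcal R\wpp_m|\lesssim C_0^{1/2}\,|B\cap\widetilde\Omega_{m-1}|/|B|$.
\end{enumerate}
Every level $j>m_R$ has stripe period $\sim\lambda_j^{-1/4}\ll\lambda_{m_R}^{-1}<R$, much finer than the pieces of $\widetilde\Omega_{j-1}$. So each such level cuts the density inside $B$ by a fixed factor $<1$, up to negligible boundary errors. Hence $|B\cap\widetilde\Omega_{m-1}|\lesssim 2^{-(m-m_R)}|B|$, uniformly in $B$, $t$ and $q$.

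Summing the three groups gives $\|F(t)\|_{\BMO}\lesssim C_0^{3/4}$. This is below $\frac1{16}C_0\sum_{m=-1}^{2q}2^{-m}$ for $C_0$ large, and no time localization is needed. Your closing remark identifies the right issue, but the proposal does not supply this uniform local density decay, and it is exactly the step on which the proposition rests.
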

We refer to \cite[Proposition 3.8]{CP} for the proof of this proposition.

\subsection{ Estimates of the error terms}
\begin{proposition}\label{def-F2}
Let $w^{\rm (p)}_{2q}$ and $w^{\rm (s)}_{2q}$ be defined in \eqref{def-wpq-1} and \eqref{def-ws}. Then there exist a pressure $P^{(1)}_{2q}$ and an error term $F^{(1)}_{2q}$ such that
\begin{align}
    \Div(w^{\rm (p)}_{2q} \otimes w^{\rm (p)}_{2q}) + \partial_t w^{\rm (s)}_{2q} = F^{(1)}_{2q} + \nabla P^{(1)}_{2q},
    \label{decom-wpwp}
\end{align}
with the following estimates:
\begin{align}
    \|F^{(1)}_{2q}\|_{L^1_t B^{-1/2}_{\infty,1}} \lesssim C_0 \lambda^{-15}_{2q-1}, \qquad
    \|F^{(1)}_{2q}\|_{L^1_t B^{0}_{\infty,1}}    \lesssim C_0 \lambda^{\frac{1}{4}}_{2q}. \label{est-Fq1-2}
\end{align}
\end{proposition}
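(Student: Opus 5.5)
The plan is to expand $\wpp_{2q}\otimes\wpp_{2q}$ using the decomposition \eqref{dec-wp} into main and remainder parts:
\[
\wpp_{2q}\otimes\wpp_{2q}=\wpm_{2q}\otimes\wpm_{2q}+\bigl(\wpm_{2q}\otimes\wpr_{2q}+\wpr_{2q}\otimes\wpm_{2q}+\wpr_{2q}\otimes\wpr_{2q}\bigr).
\]
The cross and remainder terms carry at least one factor $\wpr_{2q}$. That factor is smaller than $\wpm_{2q}$ by a factor $\lambda_{2q}^{-3/4}$, because its leading piece $\nabla(a\chi\phi)\cdot k$ costs only $\lambda^{1/4}_{2q}$ instead of $\lambda_{2q}$. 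These terms all carry the time weight $e^{-2\lambda^2_{2q}t}$, whose $L^1_t$ norm is $\lambda_{2q}^{-2}$. Their divergence is therefore bounded in $L^1_tL^\infty$ by roughly $C_0\lambda_{2q}^{-2}\cdot\lambda_{2q}\cdot\lambda_{2q}^{1/4}\cdot\lambda_{2q}$, up to negligible $\ell^{-1}_{2q-1}$ losses. Estimating $B^{-1/2}_{\infty,1}$ by $\lambda_{2q}^{-1/2}$ times derivative norms gives roughly $C_0\lambda_{2q}^{-1/4}$. Since $\lambda_{2q}=\lambda_{2q-1}^b$ with $b$ huge, this is far below $\lambda_{2q-1}^{-15}$. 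I would put all of these terms into $F^{(1)}_{2q}$.

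For the main term, I would write $\wpm_{2q}=\lambda_{2q}e^{-\lambda^2_{2q}t}\sum_k b_k e^{\ii\lambda_{2q}k\cdot x}\bar k$ with $b_k=a_{(k,2q)}\chi_{2q-1}\phi_{k,2q}$. Implicitly the sum is real, i.e.\ $\Lambda$ is symmetric and $a_{-k}=a_k$, with conjugate pairs. The product then splits into two parts.

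\textbf{(i) Resonant pairs $k'=-k$.} These give $\lambda^2_{2q}e^{-2\lambda^2_{2q}t}\sum_k b_k^2\,\bar k\otimes\bar k$. Lemma~\ref{first S}, together with $\mathbb P_{=0}\phi_{k,2q}^2=1$ and the choice of $a_{(k,2q)}$, should give that this equals
\[
\lambda^2_{2q}e^{-2\lambda^2_{2q}t}\chi^2_{2q-1}\tfrac12(2000C_0)\sum_k a_k^2\bigl(\mathrm{Id}+\tfrac{\mathcal R\wpp_{2q-1}}{1000C_0}\bigr)\bar k\otimes\bar k
\]
plus two errors. The first is the mollification error, controlled via $\partial_t\mathcal R\wpp_{2q-1}$ and $C^1$ bounds from \eqref{Rwj}, of size $\ell_{2q-1}\lambda^2_{2q-1}$. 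The second is $\mathbb P_{\neq0}(\phi^2_{k,2q})$. For the latter I would write $\mathbb P_{\ne0}\phi^2_{k,2q}$ as $\lambda^{-1/4}_{2q}$ times a derivative in direction $k$, which oscillates at frequency $\lambda^{1/4}_{2q}\gg\ell^{-1}_{2q-1}$. In $B^{-1/2}_{\infty,1}$, after integrating in time, its divergence costs $\lambda_{2q}^{-2}\lambda_{2q}^2\lambda_{2q}^{-1/8}$, which is small. By \eqref{wttq}, the divergence of the main part is exactly $-\partial_t\wss_{2q}+e^{-2\lambda^2_{2q}t}\partial_t\wpp_{2q-1}$ (using \eqref{wss2}), up to a gradient coming from the $\mathrm{Id}$ piece. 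The leftover $e^{-2\lambda^2_{2q}t}\partial_t\wpp_{2q-1}$ has $L^1_t$ size $\lambda^{-2}_{2q}\lambda^{3}_{2q-1}$, which is fine.

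\textbf{(ii) Non-resonant pairs $k'\neq-k$.} This is the main obstacle, since 2D Mikado supports intersect. Here I would use the key structure: freezing amplitudes, $U=e^{-\lambda^2_{2q}t}\sum_k c_k e^{\ii\lambda_{2q}k\cdot x}\bar k$ with constant $c_k$ is a steady Euler flow, because all wavevectors have equal modulus. Hence $\Div(U\otimes U)=\nabla(\cdots)$; this is the content of \eqref{struc}. Concretely, I would compute
\[
\Div\bigl(e^{\ii\lambda(k+k')\cdot x}(\bar k\otimes\bar k'+\bar k'\otimes\bar k)\bigr)=\nabla\bigl(\text{scalar}\,e^{\ii\lambda(k+k')\cdot x}\bigr),
\]
which holds since $|k|=|k'|=1$. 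With variable amplitudes, the product rule leaves terms in which one derivative falls on $b_kb_{k'}$. These cost $\lambda^{1/4}_{2q}$ instead of $\lambda_{2q}$, and they oscillate at frequency $\lambda_{2q}|k+k'|\gtrsim\lambda_{2q}$, since $\Lambda$ is finite and $k+k'\neq0$. The negative Besov norm then gains $\lambda_{2q}^{-1/2}$, giving roughly $\lambda_{2q}^{-2}\cdot\lambda^2_{2q}\lambda^{1/4}_{2q}\lambda^{-1/2}_{2q}=\lambda_{2q}^{-1/4}$. A stationary-phase/Bernstein argument would justify this: the amplitude has frequency at most $\lambda^{1/4+}_{2q}$, so the product is localized near $\lambda_{2q}|k+k'|$.

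\textbf{Second bound.} The $B^{0}_{\infty,1}$ estimate follows from the same decomposition with no $\lambda^{-1/2}_{2q}$ gain. The worst term is of size $\lambda_{2q}^{1/4}$ times logarithmic or constant factors, matching \eqref{est-Fq1-2}.
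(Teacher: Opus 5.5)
Your proposal is correct in substance and follows the paper's proof essentially step for step. The paper uses the same split into $\wpm_{2q}\otimes\wpm_{2q}$ plus remainder interactions. It uses the steady-Euler identity of Lemma~\ref{Betrimi} to turn the non-resonant pairs into a gradient plus amplitude-derivative errors. It then reduces the resonant part via Lemma~\ref{first S} to $2\lambda^2_{2q}e^{-2\lambda^2_{2q}t}\wpp_{2q-1}$ (cancelled by $\partial_t\wss_{2q}$), a gradient, and the mollification and $\mathbb{P}_{\neq0}\phi^2_{k,2q}$ errors. Where you invoke a Bernstein heuristic, the paper uses Proposition~\ref{Est-fe} and interpolation between $B^{-1}_{\infty,\infty}$ and $B^{1}_{\infty,\infty}$, which is rigorous and avoids any logarithmic loss in the $B^0_{\infty,1}$ bound.

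Two small corrections. First, the resonant coefficient should be $2000C_0$, not $\tfrac12\cdot 2000C_0$; with the $\tfrac12$, only half of the $\lambda^2_{2q}$-term in $\partial_t\wss_{2q}$ is cancelled. Second, your bounds for both resonant errors silently rely on $k\perp\bar k$, which the paper states explicitly: it ensures the divergence of $f(\lambda^{1/4}_{2q}k\cdot x)\,\bar k\otimes\bar k$ never falls on the fast profile. Without it, those terms would pick up a positive power of $\lambda_{2q}$ and would no longer be small.
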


\begin{proof}
Using the decomposition \eqref{dec-wp}, we write
\begin{align*}
    &\Div(\wpp_{2q} \otimes \wpp_{2q})\\
    = &\Div(\wpm_{2q} \otimes \wpm_{2q}) 
       + \Div\bigl( \wpm_{2q} \otimes \wpr_{2q} 
                   + \wpr_{2q} \otimes \wpm_{2q} 
                   + \wpr_{2q} \otimes \wpr_{2q} \bigr) \\
    =& -\Div\Bigl( \lambda^2_{2q} e^{-2\lambda^2_{2q}t} 
                   \sum_{k,k' \in \Lambda} 
                   \chi^2_{2q-1} a_{(k,2q)} a_{(k',2q)} 
                   \phi_{k,2q} \phi_{k',2q} 
                   e^{\ii\lambda_{2q}(k+k')\cdot x} 
                   \bar{k} \otimes \bar{k'} \Bigr) \\
    &+ \Div\bigl( \wpm_{2q} \otimes \wpr_{2q} 
                       + \wpr_{2q} \otimes \wpm_{2q} 
                       + \wpr_{2q} \otimes \wpr_{2q} \bigr).
\end{align*}
From Lemmas \ref{Betrimi} and \ref{first S}, a direct computation gives
\begin{align}
&-\Div\Bigl( \lambda^2_{2q} e^{-2\lambda^2_{2q}t} \chi^2_{2q-1} 
             \sum_{k,k'\in\Lambda} a_{(k,2q)} a_{(k',2q)} 
             \phi_{k,2q} \phi_{k',2q} 
             e^{\mathrm{i}\lambda_{2q}(k+k')\cdot x} 
             \bar{k} \otimes \bar{k'} \Bigr)\notag \\
&= \frac{1}{2} \lambda^2_{2q} e^{-2\lambda^2_{2q}t} \mathbb{P}_{\neq 0} \nabla_{\xi}
   \Bigl( \Bigl| \sum_{k \in \Lambda} \chi_{2q-1} a_{(k,2q)} \phi_{k,2q} 
          e^{\mathrm{i}\lambda_{2q} k \cdot \xi} \bar{k} \Bigr|^2
        - \Bigl| \sum_{k \in \Lambda} \chi_{2q-1} a_{(k,2q)} 
          \phi_{k,2q} e^{\mathrm{i}\lambda_{2q} k \cdot \xi} \Bigr|^2 \Bigr)
   \Big|_{\xi=x} \notag \\
&\quad + \lambda^2_{2q} e^{-2\lambda^2_{2q}t} \mathbb{P}_{\neq 0} \Div_{x}
   \Bigl( \sum_{\substack{k'+k\neq0 \\ k',k \in \Lambda}} 
          - a_{(k,2q)} a_{(k',2q)} \phi_{k,2q} \phi_{k',2q} 
            \chi^2_{2q-1} e^{\mathrm{i}\lambda_{2q}(k'+k)\cdot \xi} 
            \bar{k'} \otimes \bar{k} \notag \\
&\qquad\qquad\qquad\qquad\qquad 
          + \sum_{k \in \Lambda} \chi^2_{2q-1} a^2_{(k,2q)} 
            \bar{k} \otimes \bar{k} \Bigr)
   \Big|_{\xi=x} \notag \\
&= \nabla_{x} \Bigl( \sum_{\substack{k'+k\neq0\notag  \\ k,k'\in\Lambda}} 
        \frac{1}{2} \lambda^2_{2q} e^{-2\lambda^2_{2q}t} 
        \chi^2_{2q-1} a_{(k,2q)} a_{(k',2q)} 
        \phi_{k,2q} \phi_{k',2q} 
        \bigl( \bar{k} \cdot \bar{k'} - 1 \bigr) 
        e^{\mathrm{i}\lambda_{2q}(k'+k)\cdot x} \Bigr) \notag \\
&\quad - \mathbb{P}_{\neq 0} \Bigl( \sum_{\substack{k'+k\neq0 \\ k,k'\in\Lambda}} 
        \Bigl[ \frac{1}{2} \nabla\bigl( a_{(k,2q)} a_{(k',2q)} 
               \chi^2_{2q-1} \phi_{k,2q} \phi_{k',2q} \bigr) 
               \bigl( \bar{k} \cdot \bar{k'} - 1 \bigr) \notag \\
&\qquad\qquad\qquad\quad 
        + \Div \bigl( a_{(k,2q)} a_{(k',2q)} 
          \phi_{k,2q} \phi_{k',2q} \bar{k'} \otimes \bar{k} \bigr) \Bigr] 
        \lambda^2_{2q} e^{-2\lambda^2_{2q}t} 
        e^{\mathrm{i}\lambda_{2q}(k'+k)\cdot x} \Bigr) \notag \\
&\quad + \lambda^2_{2q} e^{-2\lambda^2_{2q}t} 
         \Div\Bigl( \sum_{k \in \Lambda} 
                \chi^2_{2q-1} \phi^2_{k,2q} a^2_{(k,2q)} 
                \bar{k} \otimes \bar{k} \Bigr) \notag \\
&=: \nabla_{x} P^{(1,1)}_{2q} + F^{(1,1)}_{2q}
   + \lambda^2_{2q} e^{-2\lambda^2_{2q}t} 
     \Div\Bigl( \sum_{k \in \Lambda} 
            \chi^2_{2q-1} a^2_{(k,2q)} \phi^2_{k,2q} 
            \bar{k} \otimes \bar{k} \Bigr).\label{struc}
\end{align}
By the definition of $a_{(k,2q)}$ in \eqref{def-akq} and Lemma \ref{first S}, we further have
\begin{align*}
&\lambda^2_{2q} e^{-2\lambda^2_{2q}t} 
   \Div\Bigl( \sum_{k \in \Lambda} 
          \chi^2_{2q-1} a^2_{(k,2q)} \phi^2_{k,2q} 
          \bar{k} \otimes \bar{k} \Bigr) \\
=& \lambda^2_{2q} e^{-2\lambda^2_{2q}t} 
   \sum_{k \in \Lambda} \Div\Bigl( 
        2000C_0 \chi^2_{2q-1} \phi^2_{k,2q} 
        \Bigl[ \bigl(  a_k 
               \bigl( \mathrm{Id} + \tfrac{\mathcal{R}\wpp_{2q-1}}{1000C_0} \bigr)\ast_{t,x}\psi_{\ell_{2q-1}}\varphi_{2q-1} \bigr)^2 
             - a^2_k \bigl( \mathrm{Id} 
               + \tfrac{\mathcal{R}\wpp_{2q-1}}{1000C_0} \bigr) \Bigr] 
        \bar{k} \otimes \bar{k} \Bigr) \\
&\quad + \lambda^2_{2q} e^{-2\lambda^2_{2q}t} 
   \sum_{k \in \Lambda} \Div\Bigl( 
    2000C_0 \chi^2_{2q-1} \phi^2_{k,2q} 
        a^2_k \bigl( \mathrm{Id} + \tfrac{\mathcal{R}\wpp_{2q-1}}{1000C_0} \bigr) 
        \bar{k} \otimes \bar{k} \Bigr) \\
=& \lambda^2_{2q} e^{-2\lambda^2_{2q}t} 
   \sum_{k \in \Lambda} \Div\Bigl( 
    2000C_0 \chi^2_{2q-1} \phi^2_{k,2q} 
        \Bigl[ \bigl(   a_k 
               \bigl( \mathrm{Id} + \tfrac{\mathcal{R}\wpp_{2q-1}}{1000C_0} \bigr) \ast_{t,x}\psi_{\ell_{2q-1}}\varphi_{2q-1} \bigr)^2 
             - a^2_k \bigl( \mathrm{Id} 
               + \tfrac{\mathcal{R}\wpp_{2q-1}}{1000C_0} \bigr) \Bigr] 
        \bar{k} \otimes \bar{k} \Bigr) \\
&\quad + \lambda^2_{2q} e^{-2\lambda^2_{2q}t} 
   \sum_{k \in \Lambda} \Div\Bigl( 
        2000C_0 \chi^2_{2q-1} 
        \bigl( \mathbb{P}_{\neq 0} \phi^2_{k,2q} \bigr) 
        a^2_k \bigl( \mathrm{Id} + \tfrac{\mathcal{R}\wpp_{2q-1}}{1000C_0} \bigr) 
        \bar{k} \otimes \bar{k} \Bigr) \\
&\quad + 2\lambda^2_{2q} e^{-2\lambda^2_{2q}t} \wpp_{2q-1} 
   + \nabla \bigl( 2000C_0 \lambda^2_{2q} e^{-2\lambda^2_{2q}t} \chi^2_{2q-1} \bigr) \\
&=: F^{(1,2)}_{2q} + 2\lambda^2_{2q} e^{-2\lambda^2_{2q}t} \wpp_{2q-1} 
   + \nabla P^{(1,2)}_{2q}.
\end{align*}
Now we set
\begin{align*}
    F^{(1)}_{2q} &:= F^{(1,1)}_{2q} + F^{(1,2)}_{2q} 
                   + \partial_t \wpp_{2q-1} e^{-2\lambda^2_{2q}t} \\
    &\quad + \Div\bigl( \wpm_{2q} \otimes \wpr_{2q} 
                       + \wpr_{2q} \otimes \wpm_{2q} 
                       + \wpr_{2q} \otimes \wpr_{2q} \bigr), \\
    P^{(1)}_{2q} &:= P^{(1,1)}_{2q} + P^{(1,2)}_{2q}.
\end{align*}
Using \eqref{wss2}, we obtain
\begin{align*}
    \Div(\wpp_{2q} \otimes \wpp_{2q}) + \partial_t \wss_{2q}
    = \nabla_{x} P^{(1)}_{2q} + F^{(1)}_{2q}.
\end{align*}
It remains to establish the estimate \eqref{est-Fq1-2}. 

\noindent\textbf{Estimates for $F^{(1,1)}_{2q}$.}
By Propositions \ref{def-ak} and \ref{Est-fe}, we have
\begin{align}
  \|F^{(1,1)}_{2q}\|_{L^1_t B^{-1}_{\infty,\infty}}
    &\lesssim \Bigl( \sum_{\substack{k'+k\neq0 \notag\\ k,k'\in\Lambda}} 
               \lambda^{-1}_{2q} 
               \|\chi^2_{2q-1} a_{(k,2q)} a_{(k',2q)} 
                 \phi_{k,2q} \phi_{k',2q}\|_{L^\infty_t C^1} \notag\\
    &\qquad + \sum_{\substack{k'+k\neq0 \\ k,k'\in\Lambda}} 
               \lambda^{-2}_{2q} 
               \|\chi^2_{2q-1} a_{(k,2q)} a_{(k',2q)} 
                 \phi_{k,2q} \phi_{k',2q}\|_{L^\infty_t C^3} \Bigr)\notag\\
    &\lesssim \sum_{\substack{k'+k\neq0 \\ k,k'\in\Lambda}}  (C_0\lambda^{-1}_{2q} \lambda^{1/4}_{2q}  +C_0\lambda^{-2}_{2q} \lambda^{3/4}_{2q})\lesssim C_0\lambda^{-\frac{3}{4}}_{2q},\label{F11-B-1}
\end{align}
and
\begin{align*}
    \|F^{(1,1)}_{2q}\|_{L^1_t B^{1}_{\infty,\infty}} 
    &\lesssim \sum_{\substack{k'+k\neq0 \\ k,k'\in\Lambda}} 
               \lambda_{2q} 
               \|\chi^2_{2q-1} a_{(k,2q)} a_{(k',2q)} 
                 \phi_{k,2q} \phi_{k',2q}\|_{L^\infty_t C^1} \\
    &\quad + \sum_{\substack{k'+k\neq0 \\ k,k'\in\Lambda}} 
               \|\chi^2_{2q-1} a_{(k,2q)} a_{(k',2q)} 
                 \phi_{k,2q} \phi_{k',2q}\|_{L^\infty_t C^2} \\
    &\lesssim \sum_{\substack{k'+k\neq0 \\ k,k'\in\Lambda}}(C_0 \lambda^{\frac{5}{4}}_{2q}+C_0\lambda^{\frac{1}{2}}_{2q})\lesssim C_0\lambda^{\frac{5}{4}}_{2q}.
\end{align*}
The two estimates show that
\begin{align*}
 \|F^{(1,1)}_{2q}\|_{L^1_t B^{0}_{\infty,1}}\lesssim   (\|F^{(1,1)}_{2q}\|_{L^1_t B^{-1}_{\infty,\infty}})^{\frac{1}{2}}(\|F^{(1,1)}_{2q}\|_{L^1_t B^{1}_{\infty,\infty}})^{\frac{1}{2}}\lesssim C_0\lambda^{\frac{1}{4}}_{2q}.
\end{align*}
This estimate combined with \eqref{F11-B-1} yields that
\begin{align*}
 \|F^{(1,1)}_{2q}\|_{L^1_t B^{-\frac{1}{2}}_{\infty,1}}\lesssim   (\|F^{(1,1)}_{2q}\|_{L^1_t B^{-1}_{\infty,\infty}})^{\frac{1}{2}}(\|F^{(1,1)}_{2q}\|_{L^1_t B^{0}_{\infty,1}})^{\frac{1}{2}}\lesssim C_0\lambda^{-\frac{1}{4}}_{2q}.
\end{align*}

\noindent\textbf{Estimates for $F^{(1,2)}_{2q}$.}
Note that $k\perp \bar{k}$ and
\begin{align*}
&\|\big( a_k\big({\rm Id}+\tfrac{ \mathcal{R}\wpp_{2q-1}
 }{ 1000C_0}\big)\ast_{t,x}\psi_{\ell_{2q-1}}\varphi_{\ell_{2q-1}}\big)^2-a^2_k\big({\rm Id}+\tfrac{ \mathcal{R}\wpp_{2q-1}
 }{  1000C_0}\big)\|_{L^\infty_t C^0}\\
 \lesssim& (\ell_{2q-1}\|\partial_t\mathcal{R}\wpp_{2q-1}\|_{L^\infty_tC^{1}}+\ell_{2q-1}\|\partial_t\mathcal{R}\wpp_{2q-1}\|_{L^\infty_tC^{0}}\|\mathcal{R}\wpp_{2q-1}\|_{L^\infty_tC^{0}})\\
 \lesssim& \ell_{2q-1}\lambda^{3}_{2q-1},
\end{align*}
we have, by Proposition \ref{Est-fe},
\begin{align*}
    \|F^{(1,2)}_{2q}\|_{L^1_t B^{-1}_{\infty,\infty}}
    &\lesssim C_0 \bigl\| \chi^2_{2q-1} \bigl[ 
          \bigl( a_k\big({\rm Id}+\tfrac{ \mathcal{R}\wpp_{2q-1}
 }{ 1000C_0}\big)\ast_{t,x}\psi_{\ell_{2q-1}}\varphi_{\ell_{2q-1}} \bigr)^2 
          - a^2_k \bigl( \mathrm{Id} 
                + \tfrac{\mathcal{R}\wpp_{2q-1}}{1000C_0} \bigr) \bigr] \bigr\|_{L^\infty_{t} C^0} \\
    &\quad +C_0\lambda^{-1/4}_{2q}\|\chi^2_{2q-1}a_k\big({\rm Id}+\tfrac{\mathcal{R}\wpp_{2q-1}}{1000C_0}\big)\|_{L^\infty_tC^1}+C_0\lambda^{-1/2}_{2q} \|\chi^2_{2q-1}a_k\big({\rm Id}+\tfrac{\mathcal{R}\wpp_{2q-1}}{1000C_0}\big)\|_{L^\infty_tC^3}\\
    &\lesssim C_0 \ell_{2q-1} \lambda^{3}_{2q-1} 
       \lesssim C_0 \lambda^{-40}_{2q-1},
\end{align*}

\begin{align*}
    \|F^{(1,2)}_{2q}\|_{L^1_t B^{0}_{\infty,\infty}}
    &\lesssim C_0 \bigl\| \chi^2_{2q-1} \bigl[ 
          \bigl( a_k\big({\rm Id}+\tfrac{ \mathcal{R}\wpp_{2q-1}
 }{ 1000C_0}\big)\ast_{t,x}\psi_{\ell_{2q-1}}\varphi_{\ell_{2q-1}}\bigr) \bigr)^2 
          - a^2_k \bigl( \mathrm{Id} 
                + \tfrac{\mathcal{R}\wpp_{2q-1}}{1000C_0} \bigr) \bigr] \bigr\|_{L^\infty_{t} C^1} \\
    &\quad +C_0\|\chi^2_{2q-1}a_k\big({\rm Id}+\tfrac{\mathcal{R}\wpp_{2q-1}}{1000C_0}\big)\|_{L^\infty_tC^1}\\
    &\lesssim C_0 \lambda_{2q-1},
\end{align*}
and
\begin{align*}
    \|F^{(1,2)}_{2q}\|_{L^1_t B^{1}_{\infty,\infty}}
    &\lesssim C_0 \bigl\| \chi^2_{2q-1} a^2_k 
          \bigl( \mathrm{Id} + \tfrac{\mathcal{R}\wpp_{2q-1}}{1000C_0} \bigr) 
          \bigr\|_{L^\infty_{t} C^2}  + C_0 \lambda^{1/4}_{2q} 
          \bigl\| \chi^2_{2q-1} a_k 
          \bigl( \mathrm{Id} + \tfrac{\mathcal{R}\wpp_{2q-1}}{1000C_0} \bigr) 
          \bigr\|_{L^\infty_t C^1} \\
    &\lesssim C_0 \lambda^{1/4}_{2q} \lambda^2_{2q-1}.
\end{align*}
Therefore, the above three estimates show that
\begin{align*}
    \|F^{(1,2)}_{2q}\|_{L^1_t B^{0}_{\infty,1}} 
    &\lesssim \|F^{(1,2)}_{2q}\|^{1/2}_{L^1_t B^{-1}_{\infty,\infty}} 
             \|F^{(1,2)}_{2q}\|^{1/2}_{L^1_t B^{1}_{\infty,\infty}} \lesssim C_0 \lambda^{1/8}_{2q} \lambda^{-7}_{2q-1}.
\end{align*}
 and 
\begin{align*}
    \|F^{(1,2)}_{2q}\|_{L^1_t B^{-1/2}_{\infty,1}} 
    &\lesssim \|F^{(1,2)}_{2q}\|^{1/2}_{L^1_t B^{-1}_{\infty,\infty}} 
             \|F^{(1,2)}_{2q}\|^{1/2}_{L^1_t B^{0}_{\infty,\infty}} \lesssim C_0 \lambda^{-18}_{2q-1}.
\end{align*}
  Note that the other terms in $F^{(1)}_{2q}$ are of lower order, combining the estimates for $F^{(1,1)}_{2q}$ and $F^{(1,2)}_{2q}$, we obtain \eqref{est-Fq1-2}, and thus complete the proof of Proposition \ref{def-F2}.
\end{proof}
\begin{proposition}[Estimates of $F^{(2)}_{2q}$]\label{F2}
Let
\begin{align}
F^{(2)}_{2q}=&(\partial_t w^{\rm (p)}_{2q} -\Delta w^{\rm (p)}_{2q})-\Delta w^{\rm (s)}_{2q} \notag \\
&+\Div\Big( w^{\rm (p)}_{2q} \otimes( w^{\rm (s)}_{2q}+u_{2q-2}) +  (w^{\rm (s)}_{2q}+u_{2q-2})\otimes  w^{\rm (p)}_{2q} \notag \\
&+  w^{\rm (s)}_{2q} \otimes  u_{2q-2}+u_{2q-2} \otimes   w^{\rm (s)}_{2q}+ w^{\rm (s)}_{2q} \otimes  w^{\rm (s)}_{2q}\Big).\label{D-f2}
\end{align}
Then for large enough constant $C_0$, we have
\begin{align}
    \|F^{(2)}_{2q }\|_{{L}^1_t B^{ -\frac{1}{2}}_{\infty,1}}\le C^2_0\lambda^{-\frac{1}{4}}_{2q} ,\quad \|F^{(2)}_{2q }\|_{{L}^1_t B^{ 0}_{\infty,1}}\le C^2_0\lambda^{\frac{1}{4}}_{2q }.\label{est-Fq2}
\end{align}
\end{proposition}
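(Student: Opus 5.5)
The plan is to estimate the terms of \eqref{D-f2} one group at a time. For each group I will bound the $L^1_tB^{-1}_{\infty,\infty}$ (or $L^1_tB^{-1}_{\infty,1}$) norm and the $L^1_tB^{1}_{\infty,\infty}$ norm. Then, exactly as in the proof of Proposition \ref{def-F2}, I interpolate to get the $B^0_{\infty,1}$ and $B^{-1/2}_{\infty,1}$ bounds. For terms written in divergence form $\Div G$, I use the cheap bounds $\|\Div G\|_{B^{-1}_{\infty,\infty}}\lesssim\|G\|_{L^\infty}$ and $\|\Div G\|_{B^{1}_{\infty,\infty}}\lesssim\|G\|_{C^2}$, together with the inductive bounds \eqref{u-Linfty}--\eqref{wp-L} and Proposition \ref{Est-wpws}.

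\textbf{Linear terms.} The key observation is that the principal part of $\wpp_{2q}$ solves the heat equation. The factor $e^{-\lambda_{2q}^2t}e^{\ii\lambda_{2q}k\cdot x}$ satisfies $(\partial_t-\Delta)(\cdot)=0$, so $(\partial_t-\Delta)\wpp_{2q}$ only contains terms where at least one derivative (time or space) falls on $a_{(k,2q)}\chi_{2q-1}\phi_{k,2q}$.

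Working from \eqref{def-wpq-1}, I write $(\partial_t-\Delta)\wpp_{2q}$ as $\Div$ of $\lambda^{-3}_{2q}e^{-\lambda^2_{2q}t}(\nabla+\nabla^{\TT})\nabla^\perp\big(\sum_k[\partial_t-\Delta,\,a\chi\phi]\curl(e^{\ii\lambda_{2q}k\cdot x}\bar k)\big)$. The commutator produces $-2\nabla(a\chi\phi)\cdot\nabla$, $-\Delta(a\chi\phi)$ and $\partial_t a$. Each costs at most $\lambda_{2q}^{1/4}\lambda_{2q}$ relative to the main term, or $\ell_{2q-1}^{-1}\ll\lambda_{2q}^{1/4}$ for $\partial_t a$, instead of $\lambda_{2q}^2$. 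Integrating $e^{-\lambda^2_{2q}t}$ in time gives $\lambda_{2q}^{-2}$. Thus the amplitude of the tensor $G$ is about $\lambda_{2q}^{-2}\cdot\lambda_{2q}^{1/4}\cdot\lambda_{2q}$ in $L^1_tL^\infty$ (the $\lambda_{2q}$ comes from the size of $\wpp_{2q}$), i.e.\ $\lesssim C_0\lambda_{2q}^{-3/4}$. Its $C^2$ norm is $\lesssim C_0\lambda_{2q}^{5/4}$.

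Interpolating gives $\|(\partial_t-\Delta)\wpp_{2q}\|_{L^1_tB^{0}_{\infty,1}}\lesssim C_0\lambda_{2q}^{1/4}$ and $\|(\partial_t-\Delta)\wpp_{2q}\|_{L^1_tB^{-1/2}_{\infty,1}}\lesssim C_0\lambda_{2q}^{-1/4}$.

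For $\Delta\wss_{2q}=\Div\nabla\wss_{2q}$, I use \eqref{wpq-wss2}: $\|\wss_{2q}\|_{L^1_tC^N}\lesssim C_0\lambda_{2q}^{-2}\lambda_{2q-1}^{N+1}$. This is tiny since $\lambda_{2q-1}^{N+1}\ll\lambda_{2q}^{1/4}$ for $N\le 3$, once $b$ is large.

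\textbf{Nonlinear cross terms.} For $\wpp_{2q}\otimes(\wss_{2q}+u_{2q-2})$ and its transpose, I exploit the high frequency of $\wpp_{2q}$. The other factor $v:=\wss_{2q}+u_{2q-2}$ lives at frequency at most $\lambda_{2q-2}$ (or $\lambda_{2q-1}$). I bound it in $L^\infty_tC^N$ by \eqref{u-Linfty}, \eqref{wp-N} and \eqref{e-ns} (with Bernstein for the $w^{(\rm ns)}$ parts), giving roughly $\lambda_{2q-1}^{N+1}$.

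I view $\wpp_{2q}\otimes v$ as a product whose leading factor oscillates at $\lambda_{2q}$. Proposition \ref{Est-fe}, the stationary-phase type estimate already used for $F^{(1,1)}_{2q}$, then gives $\|\Div(\wpp_{2q}\otimes v)\|_{L^1_tB^{-1}_{\infty,\infty}}$ of order $\lambda_{2q}^{-1}\cdot\lambda_{2q}\cdot\lambda_{2q}^{-2}\cdot\lambda_{2q}^{1/4}\lambda_{2q-1}^{C}$. Here the factor $\lambda_{2q}^{-2}$ comes from time integration of $e^{-\lambda_{2q}^2t}$.

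An alternative is to use $L^1_tL^\infty$ of $\wpp_{2q}$ directly: \eqref{wpq-wss2} gives $\lambda_{2q}^{-1}$, times $\|v\|_{L^\infty}\lesssim C_0\lambda_{2q-1}$, for the $B^{-1}$ bound. The $B^1$ bound, via $C^2$ norms, is $\lesssim C_0^2\lambda_{2q}\lambda_{2q-1}$. Interpolation then yields $B^0$ of size $\lambda_{2q-1}$ and $B^{-1/2}$ of size $\lambda_{2q}^{-1/2}\lambda_{2q-1}$, both acceptable since $\lambda_{2q-1}\le\lambda_{2q}^{1/4}$ for $b\ge4$.

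The terms $\wss_{2q}\otimes(u_{2q-2}+\wss_{2q})$ carry the factor $e^{-2\lambda_{2q}^2t}$. Their $L^1_t$ norm is therefore $\lambda_{2q}^{-2}$ times a power of $\lambda_{2q-1}$, which is negligible.

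\textbf{Main obstacle.} The delicate point is the $u_{2q-2}$ factor, whose $L^\infty$ size $C_0\lambda_{2q-2}$ is only controlled crudely, and the $w^{(\rm ns)}$ components, which are only in Besov spaces. I will handle the latter with the paraproduct estimate $\|fg\|_{B^{s}_{\infty,1}}\lesssim\|f\|_{L^\infty}\|g\|_{B^s_{\infty,1}}+\|g\|_{L^\infty}\|f\|_{B^s_{\infty,1}}$ for $s>0$, applied at level $B^1$, plus the embedding $B^0_{\infty,1}\subset L^\infty$. Collecting all contributions, each is at most $C_0^2$ times $\lambda_{2q}^{-1/4}$ (respectively $\lambda_{2q}^{1/4}$). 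The factor $C_0^2$ absorbs the implicit constants once $C_0$ is large, which gives \eqref{est-Fq2}.
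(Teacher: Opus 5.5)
Your proposal is correct and follows essentially the paper's proof: heat cancellation for $(\partial_t-\Delta)\wpp_{2q}$, bounds in $L^1_tB^{-1}_{\infty,\infty}$ and $L^1_tB^{1}_{\infty,\infty}$ followed by interpolation, the factor $e^{-2\lambda_{2q}^2t}$ for $\Delta\wss_{2q}$, and, for the products, exactly your ``alternative'' pairing $\|\wpp_{2q}\|_{L^1_tL^\infty}\|(\wss_{2q},u_{2q-2})\|_{L^\infty_{t,x}}$ together with $C^2$ norms. The one variation is that you get the $B^{-1}_{\infty,\infty}$ bound for the linear term from the divergence form $\wpp_{2q}=\Div(\mathcal R\wpp_{2q})$, whereas the paper splits into $\wpm_{2q}+\wpr_{2q}$ and uses Proposition~\ref{Est-fe}; your route is slightly cleaner. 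Keep the second route for the cross terms: the first would need $L^\infty_tC^N$ bounds on $u_{2q-2}$ that the induction does not supply (its $w^{(\rm ns)}$ pieces are not frequency localized, so Bernstein does not apply), while \eqref{u-Linfty} already gives the $L^\infty_{t,x}$ and $L^1_tC^2$ bounds the second route uses, so the paraproduct step is unnecessary too.
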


\begin{proof}
By virtue of \eqref{dec-wp}, we rewrite 
\begin{align*}
  \partial_t  \wpp_{2q} -\Delta \wpp_{2q}= (\partial_t - \Delta)\wpm_{2q}+ (\partial_t - \Delta)\wpr_{2q}.
\end{align*}
Observing that $ (\partial_t - \Delta)(e^{-\lambda^2_{2q}t} e^{\ii\lambda_{2q}k\cdot x})=0$, one infers that
\begin{align*}
 (\partial_t - \Delta)\wpm_{2q}   
=& \lambda_{2q }e^{-\lambda^2_{2q }t}\sum_{k\in\Lambda}\big((\partial_t - \Delta)(a_{(k,2q)} \chi_{2q-1}\phi_{k,2q} ) \big)e^{\ii\lambda_{2q }k\cdot x} \bar{k}\\
&+\ii\lambda^2_{2q }e^{-\lambda^2_{2q }t}\sum_{j=1}^2\sum_{k\in\Lambda}\partial_{x_j}(a_{(k,q)}\chi_{2q-1}\phi_{k,2q})
e^{\ii\lambda_{2q }k\cdot x}k_j \bar{k} 
\end{align*}
One can  deduce  from Proposition \ref{Est-fe} that
\begin{align}
& \| (\partial_t - \Delta)\wpm_{2q} \|_{{L}^1_t B^{ -1}_{\infty,\infty}}\notag\\
\lesssim&\|\lambda_{2q}e^{-\lambda^2_{2q}t}\|_{L^1_t} \sum_{k\in\Lambda}\big(\lambda^{-1}_{2q} \|   (\partial_t - \Delta)(a_{(k,q)} \chi_{2q-1}\phi_{k,2q}) \|_{{L}^{\infty}_{t,x}}+ \lambda^{-2}_{2q} \|   (\partial_t - \Delta)(a_{(k,q)} \chi_{2q-1}\phi_{k,2q}) \|_{{L}^{\infty}_{t}C^2}\big)\notag\\
&+ \|\lambda^2_{2q}e^{-\lambda^2_{2q}t}\|_{L^1_t}\sum_{j=1}^2\sum_{k\in\Lambda}\big(\lambda^{-1}_{2q}\|  \partial_{x_j}(a_{(k,q)}\chi_{2q-1}\phi_{k,2q})\|_{{L}^{\infty}_{t,x}}+ \lambda^{-2}_{2q} \|  \partial_{x_j}(a_{(k,q)}\chi_{2q-1}\phi_{k,2q})\|_{{L}^{\infty}_{t}C^2}\big)\notag\\
\lesssim &C_0\lambda^{-3/2}_{2q}\ell^{-1}_{2q-1}+C_0\lambda^{-3/4}_{2q}\lesssim C_0\lambda^{-3/4}_{2q}.\label{wp-b-1}
\end{align}
Additionally, we have
\begin{align*}
 &\| (\partial_t - \Delta)\wpm_{2q} \|_{{L}^1_t B^{1}_{\infty,\infty}}\\
 \lesssim&\|\lambda_{2q}e^{-\lambda^2_{2q}t}\|_{L^1_t}\sum_{k\in\Lambda}\big(\|   (\partial_t - \Delta)(a_{(k,q)} \chi_{2q-1}\phi_{k,2q}) \|_{{L}^{\infty}_{t}C^1}+ \lambda_{2q} \|   (\partial_t - \Delta)(a_{(k,q)} \chi_{2q-1}\phi_{k,2q}) \|_{{L}^{\infty}_{t,x}}\big)\\
 &+\|\lambda^2_{2q}e^{-\lambda^2_{2q}t}\|_{L^1_t}\sum_{j=1}^2\sum_{k\in\Lambda}(\|  \partial_{x_j}(a_{(k,q)}\chi_{2q-1}\phi_{k,2q})\|_{{L}^{\infty}_{t}C^1}+ \lambda_{2q}\|  \partial_{x_j}(a_{(k,q)}\chi_{2q-1}\phi_{k,2q})\|_{{L}^{\infty}_{t,x}})\\
 \lesssim&C_0\lambda^{5/4}_{2q}.
\end{align*}
Consequently, one deduces 
\begin{align*}
\| (\partial_t - \Delta)\wpm_{2q} \|_{{L}^1_t B^{ 0}_{\infty,1}}
\lesssim& (\| (\partial_t - \Delta)\wpm_{2q} \|_{{L}^1_t B^{ -1}_{\infty,\infty}})^{\frac{1}{2}}(\| (\partial_t - \Delta)\wpm_{2q} \|_{{L}^1_t B^{ 1}_{\infty,\infty }})^{\frac{1}{2}}
\lesssim C_0\lambda^{\frac{1}{4}}_{2q}.
\end{align*}
Combining this estimate with \eqref{wp-b-1}, we obtain
\begin{align*}
  \| (\partial_t - \Delta)\wpm_{2q} \|_{{L}^1_t B^{ -1/2}_{\infty,1}}
  \lesssim& (\| (\partial_t - \Delta)\wpm_{2q} \|_{{L}^1_t B^{ 0}_{\infty,1}})^{\frac{1}{2}}(\| (\partial_t - \Delta)\wpm_{2q} \|_{{L}^1_t B^{ -1}_{\infty,\infty }})^{\frac{1}{2}}\lesssim C_0\lambda^{-\frac{1}{4}}_{2q}.
\end{align*}
By the same argument for estimating $\wpr_{2q}$, we  have
\begin{align}\label{est-heat-wp}
 \| (\partial_t - \Delta)\wpp_{2q} \|_{{L}^1_t B^{ -1/2}_{\infty,1}} \lesssim C_0\lambda^{-1/4}_{2q},\,\,\,   \| (\partial_t - \Delta)\wpp_{2q} \|_{{L}^1_t B^{0}_{\infty,1}} \lesssim C_0\lambda^{1/4}_{2q}.
\end{align}

Using \eqref{wp-N} and \eqref{wpq-wss3}, we deduce
\begin{align*}
    \|\Delta \wss_{2q}\|_{L^1_tB^{-1}_{\infty,\infty}}\lesssim& \| \wss_{2q}\|_{L^1_tB^{1}_{\infty,\infty}}\lesssim \|e^{-2\lambda^2_{2q}t}\|_{L^1_t}\|\wpp_{2q-1}\|_{L^\infty C^1}
\lesssim C_0\lambda^{-2}_{2q}\lambda^2_{2q-1},
\end{align*}
and 
\begin{align*}
    \|\Delta \wss_{2q}\|_{L^1_tB^{1}_{\infty,\infty}}
\lesssim&C_0\|e^{-2\lambda^2_{2q}t}\|_{L^1_t}\|\wpp_{2q-1}\|_{L^\infty_t C^3}
\lesssim C_0\lambda^{-2}_{2q}\lambda^4_{2q-1}.
\end{align*}
Hence, we have
\begin{align}\label{es-Delt-ws}
 \|\Delta \wss_{2q}\|_{L^1_tB^{0}_{\infty,1}}\lesssim C_0\lambda^{-2}_{2q}\lambda^3_{2q-1},\quad  \|\Delta \wss_{2q}\|_{L^1_tB^{-\frac{1}{2}}_{\infty,1}}\lesssim C_0\lambda^{-2}_{2q}\lambda^{\frac{5}{2}}_{2q-1}.
\end{align}
We infer from \eqref{u-Linfty}, Proposition \ref{Est-wpws} and \eqref{D-f2} that
\begin{align*}
&\Big \|F^{(2)}_{2q}-\big((\partial_t  \wpp_{2q} -\Delta \wpp_{2q})-\Delta \wss_{2q} \big)\Big\|_{L^1_tB^{-1}_{\infty,\infty}}\\
\lesssim&\|\wpp_{2q}\|_{L^\infty_{t,x}}(\|(\wss_{2q},u_{2q-2})\|_{L^1_{t}L^\infty}+\|\wss_{2q}\|_{L^1_{t}L^\infty}(\|(\wss_{2q},u_{2q-2})\|_{L^\infty_{t,x}})\\
\lesssim&C^2_0\lambda^{-1}_{2q}\lambda_{2q-1}
\end{align*}
and 
\begin{align*}
&\Big \|F^{(2)}_{2q}-\big((\partial_t  \wpp_{2q} -\Delta \wpp_{2q})-\Delta \wss_{2q} \big)\Big\|_{L^1_tB^{1}_{\infty,\infty}}\\
\lesssim&\|\wpp_{2q}\|_{L^1_{t}C^2}\|(\wss_{2q},u_{2q-2})\|_{L^\infty_{t,x}}+\|\wpp_{2q}\|_{L^\infty_{t,x}}\|(\wss_{2q},u_{2q-2})\|_{L^1_{t}C^2}\\
&+\|\wss_{2q}\|_{L^1_{t}C^2}\|(\wss_{2q},u_{2q-2})\|_{L^\infty_{t,x}}+\|\wss_{2q}\|_{L^\infty_{t,x}}\|(\wss_{2q},u_{2q-2})\|_{L^1_{t}C^2}\\
\lesssim&C^2_0\lambda_{2q}\lambda_{2q-1}.
\end{align*}
Hence, it follows that
\begin{align}
  \Big \|F^{(2)}_{2q}-\big((\partial_t  \wpp_{2q} -\Delta \wpp_{2q})-\Delta \wss_{2q} \big)\Big\|_{L^1_tB^{0}_{\infty,1}}  \lesssim C^2_0\lambda_{2q-1},
\end{align}
and 
\begin{align}\label{Es-F2-other}
  \Big \|F^{(2)}_{2q}-\big((\partial_t  \wpp_{2q} -\Delta \wpp_{2q})-\Delta \wss_{2q} \big)\Big\|_{L^1_tB^{-\frac{1}{2}}_{\infty,1}}  \lesssim C^2_0\lambda^{-1/2}_{2q}\lambda_{2q-1}.
\end{align}
Collecting  \eqref{est-heat-wp}--\eqref{Es-F2-other} together, we complete the proof of Proposition \ref{F2}.
\end{proof}
\subsection{Construction of $\wtq$ and completion of iteration}\label{Con-wns}
Let $\wtq$ solve the following Cauchy problem
 \begin{equation}
\left\{ \begin{alignedat}{-1}
&\del_t \wtq-\Delta \wtq+\Div \big(\wtq\otimes \wtq\big)+\Div\big(\wtq\otimes(u_{2q-2}+\wpp_{2q}+\wss_{2q})\big) \\
&+\Div((u_{2q-2}+\wpp_{2q}+\wss_{2q})\otimes \wtq)  +\nabla p^{(\textup{ns})}_{2q }  = -F_{2q },
\\
 & \Div \wtq = 0,
  \\
  & \wtq |_{t=0}=  0 ,
\end{alignedat}\right.
 \label{e:wt}\tag{FNS}
\end{equation}
 where $F_{2q }:=F^{(1)}_{2q }+F^{(2)}_{2q }$ stems from Propositions \ref{def-F2} and \ref{F2}.

 \begin{proposition}[Estimates for $\wtq$]\label{wtq-H3}
There  exists a global-in-time solution $\wtq \in C^\infty_{t,x}(\R^{+}\times\TTT^2)$ of the system \eqref{e:wt}
 satisfying
\begin{align}
\|\wtq\|_{\widetilde L^{\infty}_tB^{-1/2}_{\infty,1}\cap L^{1}_tB^{3/2}_{\infty,1}} \le \lambda^{-10}_{2q-1}, \label{wtq1}
\end{align}
Moreover, we have
\begin{align}
\|\wtq\|_{\widetilde L^{\infty}_tB^{0}_{\infty,1}\cap L^{1}_tB^{2}_{\infty,1}} \le \lambda^{1/2}_{2q}.\label
{wtq2}
\end{align}
\end{proposition}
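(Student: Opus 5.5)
The plan is a perturbative fixed-point argument for \eqref{e:wt} in $X:=\widetilde L^\infty_tB^{-1/2}_{\infty,1}\cap L^1_tB^{3/2}_{\infty,1}$. Write $V:=u_{2q-2}+\wpp_{2q}+\wss_{2q}$. By Duhamel,
\[
\wtq=-\int_0^t e^{(t-s)\Delta}\mathbb P\bigl(F_{2q}+\Div(\wtq\otimes\wtq)+\Div(\wtq\otimes V+V\otimes\wtq)\bigr)(s)\,\dd s,
\]
where $\mathbb P$ is the Leray projector.

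\textbf{Step 1: linear estimates.} The heat semigroup satisfies the maximal regularity estimate
\[
\|e^{t\Delta}*f\|_{\widetilde L^\infty_tB^{s}_{\infty,1}\cap L^1_tB^{s+2}_{\infty,1}}\lesssim\|f\|_{L^1_tB^{s}_{\infty,1}}
\]
on mean-zero fields. The $\mathbb P_{=0}$ part is handled separately and is harmless, since the forcing has zero mean up to a gradient. With $s=-1/2$, Propositions \ref{def-F2} and \ref{F2} give a forcing contribution of size $C_0\lambda_{2q-1}^{-15}+C_0^2\lambda_{2q}^{-1/4}\ll\lambda_{2q-1}^{-10}$, since $\lambda_{2q}=\lambda_{2q-1}^b$ with $b$ large.

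\textbf{Step 2: the nonlinear term.} By Bony paraproducts in 2D,
\[
\|\Div(w\otimes w)\|_{B^{-1/2}_{\infty,1}}\lesssim\|w\|_{B^{-1/2}_{\infty,1}}\|w\|_{B^{3/2}_{\infty,1}},
\]
so this term is bounded by $\|w\|_X^2$, which is quadratically small.

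\textbf{Step 3: the linear term $\Div(w\otimes V+V\otimes w)$ (main obstacle).} $V$ is not small: $\|V\|_{L^\infty_{t,x}}\sim\lambda_{2q}$. I will exploit that $\|V\|_{L^1_tC^1}\lesssim C_0q$ by \eqref{u-L1C1}, \eqref{wpq-wss2}, and that $\|V\|_{L^1_tC^2}\lesssim C_0\lambda_{2q}$. The plan is:
\begin{enumerate}
\item Bound $\|\Div(w\otimes V)\|_{B^{-1/2}_{\infty,1}}\lesssim\|w\|_{B^{1/2}_{\infty,1}}\|V\|_{C^1}$ (in fact $\|V\|_{W^{1,\infty}}$ suffices via paraproducts).
\item Interpolate $\|w\|_{B^{1/2}_{\infty,1}}\le\|w\|_{B^{-1/2}_{\infty,1}}^{1/2}\|w\|_{B^{3/2}_{\infty,1}}^{1/2}$ in space, then in time.
\item Apply a Gronwall argument in the Chemin--Lerner framework. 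Multiply by $\exp(-C\int_0^t\|V\|_{C^1})$ or, equivalently, split $[0,\infty)$ into finitely many intervals on which $\int\|V\|_{C^1}\,\dd s$ is small.
\end{enumerate}
This yields a factor $e^{CC_0q}$. The danger is that $e^{CC_0 q}$ loses against the gain. It is acceptable, however, because $\lambda_{2q-1}^{-15+10}=a^{-5b^{2q-1}}$ dominates any $e^{Cq}$.

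\textbf{Step 4: closing the argument.} A continuity/bootstrap argument on $\|w\|_X$ closes with $\|\wtq\|_X\le e^{CC_0q}C_0^2\lambda_{2q-1}^{-15}\le\lambda_{2q-1}^{-10}$. Global existence and smoothness for $t>0$ then follow from parabolic smoothing, since all data are smooth and the 2D Navier--Stokes-type system with smooth coefficients does not blow up once the critical norm is controlled.

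\textbf{Step 5: the bound \eqref{wtq2}.} Repeat the argument at regularity $s=0$, using the $B^0_{\infty,1}$ forcing bounds $C_0\lambda_{2q}^{1/4}$ and $C_0^2\lambda_{2q}^{1/4}$. The product estimates need $\|V\|_{C^1}$ in $L^1_t$ together with $\|w\|_{L^\infty_tB^{-1/2}}$ from \eqref{wtq1}. The result is $\lesssim e^{CC_0q}C_0^2\lambda_{2q}^{1/4}\le\lambda_{2q}^{1/2}$.
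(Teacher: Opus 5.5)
Your Step 3 does not close as written, and it is the step you yourself flagged as the main obstacle.

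Write $w:=\wtq$. The bound $\|\Div(w\otimes V+V\otimes w)\|_{B^{-1/2}_{\infty,1}}\lesssim\|V\|_{C^1}\|w\|_{B^{1/2}_{\infty,1}}$ loses a full derivative on $w$ relative to $B^{-1/2}_{\infty,1}$. This loss is unavoidable for $V\cdot\nabla w$ once you treat it as a Duhamel source. Your space $X$ controls $\|w\|_{B^{1/2}_{\infty,1}}$ only in $L^2_t$, not in $L^\infty_t$. So after interpolation, the natural way to close is
\[
\int_0^T\|V\|_{C^1}\|w\|_{B^{1/2}_{\infty,1}}\,\dd s\le\varepsilon\|w\|_{L^1_TB^{3/2}_{\infty,1}}+C_\varepsilon\int_0^T\|V\|_{C^1}^2\|w\|_{B^{-1/2}_{\infty,1}}\,\dd s .
\]
Gronwall then produces $\exp(C\|V\|^2_{L^2_tC^1})$. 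Likewise, a time-splitting would need intervals on which $\|V\|_{L^2(I)C^1}$ is small, not $\int_I\|V\|_{C^1}$.

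Since $\|\wpp_{2q}(t)\|_{C^1}\sim\lambda_{2q}^2e^{-\lambda_{2q}^2t}$, you get $\|V\|_{L^2_tC^1}^2\sim\lambda_{2q}^2$. The resulting factor $e^{c\lambda_{2q}^2}$ cannot be absorbed by the gain $\lambda_{2q-1}^{-5}$. The claimed loss $e^{CC_0q}$ is therefore not justified: the bound $\|V\|_{L^1_tC^1}\lesssim C_0q$ alone does not control a derivative-losing linear term. Step 5 inherits the same problem.

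The missing ingredient is the transport structure of $V\cdot\nabla w$.
\begin{itemize}
\item The paper applies $\mathbb{P}_H$ and writes the equation as $\partial_tw-\Delta w+V\cdot\nabla w=g$, where $g=\Delta^{-1}\nabla\Div(V\cdot\nabla w)-\mathbb{P}_H\big(w\cdot\nabla(w+V)+F_{2q}\big)$.
\item It then uses the transport--diffusion estimate of Lemma \ref{T-D}, whose constant depends only on $\int_0^T\|\nabla V\|_{L^\infty}\,\dd s$. The top-order term is handled by a commutator, not as a source.
\item Because $\Div V=\Div w=0$, every term of $g$ except $F_{2q}$ is bounded by $\|(w,V)\|_{C^1}\|w\|_{B^{-1/2}_{\infty,1}}$, with no derivative loss on $w$.
\item So Gronwall only sees $\|V\|_{L^1_tC^1}\le C_0(q+1)$. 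The resulting double-exponential-in-$q$ loss is beaten by $\lambda_{2q-1}^{5}$ once $b\ge 10e^{10CC_0}$.
\end{itemize}

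If you prefer to keep your Duhamel framework, you must change which norm of $V$ carries the loss.
\begin{itemize}
\item Split $\Div(V\otimes w)$ by paraproducts into a piece bounded by $\|V\|_{L^\infty}\|w\|_{B^{1/2}_{\infty,1}}$, plus pieces bounded by $\|\nabla V\|\,\|w\|_{B^{-1/2}_{\infty,1}}$.
\item Use the scale-invariant bound $\|V\|_{L^2_tL^\infty}\lesssim C_0q$. For instance, $\|\wpp_j\|_{L^2_tL^\infty}^2\le\|\wpp_j\|_{L^\infty_{t,x}}\|\wpp_j\|_{L^1_tL^\infty}\le C_0^{3/2}$ by the inductive bounds.
\item Cut $[0,\infty)$ into $O(C_0^2q^2)$ intervals on which $\|V\|_{L^2(I)L^\infty}$ is small.
\end{itemize}
Here the half-derivative loss is paid by $\|V\|_{L^\infty}$ in $L^2_t$, which is $O(C_0 q)$. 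It is not paid by $\|V\|_{C^1}$, which is only bounded in $L^1_t$.
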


\begin{proof}
Applying the Leray projector 
$\mathbb{P}_{H} = \mathrm{Id} - \Delta^{-1}\nabla\Div$ to equation \eqref{e:wt}
, we obtain
\begin{align*}
&
\partial_t \wtq - \Delta \wtq + (\wss_{2q} + \wpp_{2q} + u_{2q-2}) \cdot \nabla \wtq \\
=& 
\Delta^{-1}\nabla\Div\big((\wss_{2q} + \wpp_{2q} + u_{2q-2}) \cdot \nabla \wtq\big) \\
&
\quad - \mathbb{P}_{H}\big(\wtq \cdot \nabla(\wtq + \wss_{2q} + \wpp_{2q} + u_{2q-2}) + F_{2q}\big
).
\end{align*}
Using Proposition~
\ref{T-D}
, we deduce that
\begin{align}
&
\|\wtq\|_{\widetilde L^{\infty}_TB^{-1/2}_{\infty,1}\cap L^{1}_TB^{3/2}_{\infty,1}} \notag\\
\le& C\exp\Big(C\| \wss_{2q} + \wpp_{2q} + u_{2q-2} \|_{L^1_TC^1}\Big) \Big( \|\Delta^{-1}\nabla\Div\big((\wss_{2q} + \wpp_{2q} + u_{2q-2}) \cdot \nabla \wtq\big) \|_{L^{1}_tB^{-1/2}_{\infty,1}} \notag\\
&+ 
\|\mathbb{P}_{H}\big(\wtq \cdot \nabla(\wtq + \wss_{2q} + \wpp_{2q} + u_{2q-2})\big) \|_{L^{1}_tB^{-1/2}_{\infty
,1}}
+ 
\|\mathbb{P}_{H} F_{2q} \|_{L^{1}_tB^{-1/2}_{\infty,1}} \Big). \label
{E-wns}
\end{align}
Since 
$\Div\wss_{2q} = \Div\wpp_{2q} = \Div\wtq = \Div u_{2q-2} = 0$
, we have
\begin{align*}
&
\|\Delta^{-1}\nabla\Div\big((\wss_{2q} + \wpp_{2q} + u_{2q-2}) \cdot \nabla \wtq\big) \|_{B^{-1/2}_{\infty,1}} \\
&+ 
\|\mathbb{P}_{H}\big(\wtq \cdot \nabla(\wtq + \wss_{2q} + \wpp_{2q} + u_{2q-2})\big) \|_{B^{-1/2}_{\infty,1}} \\
\lesssim& \|(\wtq, \wss_{2q}, \wpp_{2q}, u_{2q-2})\|_{C^1} \|\wtq\|_{B^{-1/2}_{\infty
,1}}.
\end{align*}
Putting these estimates into 
\eqref{E-wns}
 yields
\begin{align}
&
\|\wtq\|_{\widetilde L^{\infty}_TB^{-1/2}_{\infty,1}\cap L^{1}_TB^{3/2}_{\infty,1}} \notag\\
\le& C\exp\big(CC_0(q+1)\big) \Big( \int_0^t \|(\wtq, \wss_{2q}, \wpp_{2q}, u_{2q-2})\|_{C^1} \|\wtq\|_{B^{-1/2}_{\infty,1}} \, \dd s + C_0\lambda^{-15}_{2q-1} \Big). \label
{E-wns2}
\end{align}
Let
\begin{align}
T^{
\star} := \sup\Big\{ T \in [0,\infty) \,\big|\, \|\wtq\|_{\widetilde L^{\infty}_TB^{-1/2}_{\infty,1}\cap L^{1}_TB^{3/2}_{\infty,1}} \leq \lambda^{-10}_{2q-1} \Big\}. \label
{def-T}
\end{align}
Then for any 
$T\in [0,T^{\star}]$
, we have
\begin{align*}
&
\|\wtq\|_{\widetilde L^{\infty}_TB^{-1/2}_{\infty,1}\cap L^{1}_TB^{3/2}_{\infty,1}} \\
\le& 2C\exp\big(CC_0(q+1)\big) \Big( \int_0^T  \| \wss_{2q} + \wpp_{2q} + u_{2q-2} \|_{C^1} \|\wtq\|_{B^{-1/2}_{\infty,1}} \, \dd s + C_0\lambda^{-15}_{2q-1} \Big
).
\end{align*}
Using Gr{o}nwall's inequality, we obtain
\begin{align}
&\|\wtq\|_{\widetilde L^{\infty}_TB^{-1/2}_{\infty,1}\cap L^{1}_TB^{3/2}_{\infty
,1}}\\
\le& 2CC_0\exp\big(CC_0(q+1)\big) \lambda^{-15}_{2q-1} \exp\Big(  2C\exp\big(CC_0(q+1)\big)\int_0^T \| \wss_{2q} + \wpp_{2q} + u_{2q-2} \|_{C^1}\, \dd s \Big) \notag \\
\le& \exp(2\exp(2CC_0(q+1))) \lambda^{-15}_{2q-1}, \label
{E-wns3}
\end{align}
where we have used the estimates 
\eqref{u-L1C1} and \eqref{wpq-wss2}
 to bound
\begin{align*}
\| \wss_{2q} + \wpp_{2q} + u_{2q-2} \|_{L^1_TC^1} \le
 C_0(q+1).
\end{align*}
By choosing $b$ large enough such that
$b\ge 10e^{10CC_0}$, one infers from \eqref{E-wns3}
 that
\begin{align*}
\|\wtq\|_{\widetilde L^{\infty}_TB^{-1/2}_{\infty,1}\cap L^{1}_TB^{3/2}_{\infty,1}} \le \lambda_{2q-1} \lambda^{-15}_{2q-1} = \lambda
^{-14}_{2q-1}.
\end{align*}
 From the definition \eqref{def-T} and a continuity argument, we conclude that $T^{\star} = \infty$. Moreover, one can deduce the estimate \eqref{wtq2} by a similar argument. Thus the proof of Proposition~
\ref{wtq-H3}
 is completed.
\end{proof}

 With the aid of \eqref{def-wpq-1}, Propositions \ref{Est-wpws} and  \ref{cedu}, one immediately deduces that $\wpp_{2q}$, $\wss_{2q}$ and $\wtq$ satisfy \eqref{def-R}--\eqref{e-ns}.
  Let
   \begin{align*}
       w_{2q}=\wpp_{2q}+\wss_{2q}+\wtq,
   \end{align*}
and
   \begin{align*}
    u_{2q}=u_{2q-2}+w_{2q}.
\end{align*}
We immediately show \eqref{u-decomposition} for $j=m=2q$. Next, we verify that $u_{2q}$ satisfies the estimates stated in Proposition \ref{iteration}. By \eqref{u-Linfty} and \eqref{wm-infty}, we have
\begin{align*}
   \|u_{2q}\|_{L^\infty_{t,x}}\le& \|u_{2q-2}\|_{L^\infty_{t,x}}+\|w_{2q}\|_{L^\infty_{t,x}}
   \le C_0\lambda_{2q-2}+\frac{1}{2}C_0\lambda_{2q}\le C_0\lambda_{2q},
\end{align*}
and
\begin{align*}
   \|u_{2q}\|_{L^1_{t}C^2}\le& \|u_{2q-2}\|_{L^1_{t}C^2}+\|w_{2q}\|_{{L^1_{t}C^2}}
   \le C_0\lambda_{2q-2}+\frac{1}{2}C_0\lambda_{2q}\le C_0\lambda_{2q},
\end{align*}
which gives \eqref{u-Linfty} for $j=m=2q$.
It follows from \eqref{u-L1C1} and \eqref{wm-infty} that
\begin{align*}
   \|u_{2q}\|_{L^1_{t}C^1}\le& \|u_{2q-2}\|_{L^1_{t}C^1}+\|w_{2q}\|_{L^1_{t}C^1}
   \le C_0(q-1)+\frac{1}{2}C_0\le C_0q.
\end{align*}
This estimate shows \eqref{u-L1C1} for $j=m=2q$. Propositions  \ref{BMO} and \ref{wtq-H3}  together show that
\begin{align*}
 \|u_{2q}\|_{L^\infty \BMO^{-1}}\le \frac{1}{2}C_0+ 2\lambda^{-10}_1\le C_0,
\end{align*}
which gives \eqref{e-Um-BMO}. Based on  the equality \eqref{def-ws} and the system \eqref{e:wt}, we  have  \eqref{ini-wm}. The fact $$\widetilde L^{\infty}_tB^{-1/2}_{\infty,1}(\TTT^2)\cap L^{1}_tB^{3/2}_{\infty,1}(\TTT^2)\hookrightarrow L^{2}_tL^p(\TTT^2) \cap L^{1}_t  W^{1,p}(\TTT^2),\,\,\,\forall p\ge 1,$$ 
together with \eqref{wp-L} and \eqref{e-ns} yields \eqref{wm-L}.  Hence, we complete the proof of Proposition \ref{iteration} for the case where $m$ is even. For the case that $m$ is odd, we have $m=2q+1$ for some $q \ge 1$. We construct $\wpp_{2q+1}, \wss_{2q+1}$ and $ w^{(\text{ns})}_{2q+1}$ by replacing the parameter $2q$ in definitions \eqref{def-wpq-1}, \eqref{def-ws} and in the system \eqref{e:wt} with $2q+1$. Let $w_{2q+1}=\wpp_{2q+1}+\wss_{2q+1}+w^{\text{(ns)}}_{2q+1}$ and $u_{2q+1}=u_{2q-1}+w_{2q+1}.$ Following the same argument as for the even case, Proposition \ref{iteration} is  established also for odd $m$, which finishes the proof of Proposition \ref{iteration}.

 \section{Proof of Theorem \ref{t:main}}\label{p-t}
By virtue of Proposition \ref{iteration}, we obtain two sequences of smooth solutions $\{ u_{2q} \}_{q\ge 1}$ and $\{ u_{2q+1} \}_{q\ge 0}$ to the Navier--Stokes equations \eqref{NS}. The identity $u_{m}-u_{m-2}=w_m$, together with estimate \eqref{wm-L}, implies that both $\{ u_{2q} \}_{q\ge 1}$ and $\{ u_{2q+1} \}_{q\ge 1}$ are Cauchy sequences in the space $L^{2}_tL^r\cap L^{1}_tW^{1,r}$ for every $1\le r<\infty$. Consequently, there exist two functions  
$u^{\mathrm{even}}, u^{\mathrm{odd}}\in L^{2}_tL^r\cap L^{1}_tW^{1,r}$ such that  
\[
u_{2q}\longrightarrow u^{\mathrm{even}},\qquad 
u_{2q+1}\longrightarrow u^{\mathrm{odd}},\qquad 
\text{strongly in } L^{2}_tL^r\cap L^{1}_tW^{1,r}\ \text{ as } q\to\infty .
\]

On the other hand, from \eqref{e-Um-BMO} we infer that
\begin{equation} \label{eq:weakstar_converge}
u_{2q} \weakstar u^{\mathrm{even}},\qquad 
u_{2q+1} \weakstar u^{\mathrm{odd}},\qquad 
\text{weakly-* in } L^\infty_t {\rm BMO}^{-1} \text{ as } q \to \infty .
\end{equation}
Hence $u^{\mathrm{even}}, u^{\mathrm{odd}}\in L^\infty_t \BMO^{-1}$, and 
\begin{align*}
u^{\mathrm{even}}=\sum_{l=1}^{\infty} \bigl(\wpp_{2l}+\wss_{2l}+w^{\mathrm{(ns)}}_{2l}\bigr),
\,\,\,
u^{\mathrm{odd}} =\sum_{l=0}^{\infty} \bigl(\wpp_{2l+1}+\wss_{2l+1}+w^{\mathrm{(ns)}}_{2l+1}\bigr).
\end{align*}
From \eqref{ini-wm}, we deduce  
$$u_{2q}(0,x)=u_{2q+1}(0,x)-\wpp_{2q+1}(0,x).$$  
From the proof of Proposition \ref{cedu}, one easily verifies  that $w^{(\rm p)}_{2q+1} \to 0$ strongly  in $W^{-1,p}(\mathbb{T}^2)$ for every $p<\infty$, which yields  
\[
u^{\mathrm{even}}(0,x)=u^{\mathrm{odd}}(0,x)\qquad\text{in the sense of distributions}.
\]

We now turn to the non‑uniqueness of $u^{\mathrm{even}}$ and $u^{\mathrm{odd}}$.  
Recalling that  
\[
\wpp_{1}=u_1=\lambda_1 \sin(\lambda_1x_1)e^{-\lambda^2_1t}e_2,.
\]
we obtain, at $t=\lambda_1^{-2}$,
\begin{align*}
\bigl\|\wpp_{1}(\lambda_1^{-2},\cdot)\bigr\|_{B^{-1}_{\infty,\infty}}
&=\|\lambda_1\sin(\lambda_1x_1)\|_{B^{-1}_{\infty,\infty}}e^{-1}=:M_0.
\end{align*}
Choosing $C_0$ large enough such that $M_0\le C^{1/2}_0$, one has
\begin{align*}
&\sum_{q\ge 1}
\bigl\|(\wpp_{2q},\wss_{2q},\wtq, \wpp_{2q+1}, \wss_{2q+1}, 
      w^{\mathrm{(ns)}}_{2q+1})(\lambda_1^{-2},\cdot)\bigr\|_{B^{-1}_{\infty,\infty}} \\
\lesssim&\sum_{q\ge 1}\bigl(C_0^{2}e^{-\lambda^2_{2q}\lambda_1^{-2}}
          +\lambda_{2q-1}^{-10}\bigr)\lesssim\lambda_1^{-8},
\end{align*}
where the last inequality holds for large enough $a$. Therefore, we have 
\begin{align*}
&\bigl\| (u^{\mathrm{odd}}-u^{\mathrm{even}})(\lambda_1^{-2},\cdot)\bigr\|_{B^{-1}_{\infty,\infty}}\\
\ge& \bigl\|\wpp_{1}(\lambda_1^{-2},\cdot)\bigr\|_{B^{-1}_{\infty,\infty}} -\sum_{q\ge 1}6\bigl\|(\wpp_{2q},\wtq, \wss_{2q}, \wpp_{2q+1}, 
        \wss_{2q+1}, w^{\mathrm{(ns)}}_{2q+1})(\lambda_1^{-2},\cdot)
        \bigr\|_{B^{-1}_{\infty,\infty}} \\
\ge& M_0 -C\lambda_1^{-8}
      \ge \frac{M_0}{2} ,
\end{align*}
as long as  $a$ is sufficiently large. From the constructions of \( \wpp_m \) and \( \wss_m \), one can deduce that
 \[
 \sum_{m=1}^\infty (\wpp_{2m} + \wss_{2m}), \quad \sum_{m=0}^\infty (\wpp_{2m+1} + \wss_{2m+1}) \in C_{t,x}^{\infty}(\mathbb{R}^+ \times \mathbb{T}^2).
\]
 Moreover, it follows  that, for some \( \alpha > 0 \),
\[\sum_{m=1}^\infty w_{2m}^{(\text{ns})}, \quad \sum_{m=0}^\infty w_{2m+1}^{(\text{ns})} \in C^0((0,\infty);C^{1,\alpha}(\mathbb{T}^2) ), \]
where we use the definition of $F_m$.
Noting that \( u^{\text{odd}} \) and \( u^{\text{even}} \) satisfy the system \eqref{NS},  we conclude that both \( u^{\text{odd}} \) and \( u^{\text{even}} \) are smooth for \( t > 0 \). Readers can refer to \cite{CP} for the detailed proof. Since $\BMO^{-1}(\TTT^2)\hookrightarrow B^{-1}_{\infty,\infty}(\TTT^2)$, we show $u^{\mathrm{odd}}\neq u^{\mathrm{even}}$ in $L^\infty_t\BMO^{-1}$. Therefore, we complete the proof of Theorem ~\ref{t:main}.

\appendix

\section{}\label{App}
In this section, we compile several useful tools {including geometric lemma,  the definitions of mollifiers and Chemin-Lerner
space,  the local well-posedness of  the tranport-diffusion system.}
\begin{definition}[Mollifiers]\label{e:defn-mollifier-t}Let nonnegative functions $\varphi(t)\in C^\infty_c(-1,0)$ and $\psi(x)\in C^\infty_c(B_1(0))$ be standard mollifying kernels such that $\int_{\R}\varphi(t)\dd t=\int_{\R^2}\psi(x)\dd x=1$.
For each $\epsilon>0$, we define  two sequences of  mollifiers as follows:
\begin{align*}
     \varphi_{\epsilon}(t)
        \coloneq \frac1{\epsilon} \varphi\left(\frac t\epsilon\right), \quad\psi_\epsilon(x)
            \coloneq \frac1{\epsilon^2} \psi\left(\frac{x}\epsilon\right).
\end{align*}
\end{definition}
The choice of such compact support in time for $\varphi(t)$ is due to the fact that  the convolved object in the convolution \eqref{def-akq} is defined only for $t\ge 0$.
\begin{lemma}[Stationary flows in 2D \cite{CDS}]\label{Betrimi}Let $\mathbb{S}^1$ be the unit circle and $\mathbb{Q}$  the set of rational numbers.
Given $\Lambda\subseteq\mathbb{S}^1\cap\mathbb{Q}^2$ such that $-\Lambda=\Lambda$. Then for any $b_k\in\mathbb{R}$ with ${b_{k}}=b_{-k}$, the vector field
$$ W(\xi)=\sum_{k\in\Lambda}b_k\frac{\ii \bar{k}}{|k|}e^{ \ii k\cdot \xi}, k\perp\bar{k}, $$
is real-valued, divergence-free and satisfies
$$\Div_{\xi}(W\otimes W)= \frac{1}{2}\nabla_{\xi}(|W|^2-\Big|\sum_{k\in\Lambda}\frac{b_k}{|k|}e^{\ii k\cdot\xi}\Big|^2)$$
and \begin{align}\label{b-g}
W\otimes W=&\sum_{j,k\in\Lambda,~j+k\neq0}-b_j b_k e^{\ii (j+k)\cdot\xi}\frac{  \bar{j}}{|j|}\otimes \frac{  \bar{k}}{|k|}+\sum_{j,k\in\Lambda,~j+k=0} b_k^2\frac{  \bar{k}}{|k|}\otimes \frac{  \bar{k}}{|k|}\notag\\
=&\sum_{j,k\in\Lambda,~j+k\neq0}-\frac{b_k}{|k|}\frac{b_j}{|j|}  e^{\ii (j+k)\cdot\xi} {  \bar{j}} \otimes  { \bar{k}} +\sum_{j,k\in\Lambda,~j+k=0} \Big(\frac{b_k}{|k|}\Big)^2   \bar{k}\otimes\bar{k}.
%=&\sum_{j,k\in\Lambda,~j+k\neq0}-a_j a_k{j}e^{\ii (j+k)\xi}\frac{  j^{\perp}}{|j|}\otimes \frac{  k^{\perp}}{|k|}+\sum_{k\in\Lambda} a^2_k(Id- \frac{  k }{|k|}\otimes \frac{  k }{|k|}).
\end{align}

%where $\frac{  k^{\perp}}{|k|}\otimes \frac{  k^{\perp}}{|k|}=Id-\frac{  k }{|k|}\otimes \frac{  k }{|k|}$.
\end{lemma}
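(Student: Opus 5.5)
This is a direct algebraic computation on the finite Fourier sum $W$; nothing from the earlier analysis is needed. I will carry out three checks in order: that $W$ is real and divergence-free, the formula for $W\otimes W$, and the divergence identity.

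\textbf{Real and divergence-free.} I fix the convention $\overline{-k}=-\bar{k}$, i.e.\ $\bar k$ is a fixed rotation of $k$ by $\pi/2$, and I use $b_{-k}=b_k$. Then the term indexed by $-k$ is
\[
b_k\,\tfrac{-\ii\bar k}{|k|}\,e^{-\ii k\cdot\xi},
\]
which is the complex conjugate of the term indexed by $k$. Summing over the symmetric set $\Lambda=-\Lambda$ therefore gives a real field. Divergence-freeness is immediate, since
\[
\Div_\xi\big(\bar k\, e^{\ii k\cdot\xi}\big)=\ii(k\cdot\bar k)\,e^{\ii k\cdot\xi}=0 .
\]

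\textbf{Formula \eqref{b-g}.} I expand $W\otimes W$ as a double sum over $j,k\in\Lambda$. The product of coefficients is
\[
\ii^2 b_jb_k\,|j|^{-1}|k|^{-1}\,e^{\ii(j+k)\cdot\xi}\,\bar j\otimes\bar k .
\]
I split the sum into the terms with $j+k\neq0$ and those with $j=-k$. On the diagonal $j=-k$, using $\bar j=-\bar k$ and $b_{-k}=b_k$, the factors give $(-1)(-1)b_k^2$, so the term is $+\,b_k^2\,\bar k\otimes\bar k/|k|^2$, which is exactly \eqref{b-g}. The second line of \eqref{b-g} is just a regrouping of the $|k|$ factors.

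\textbf{Divergence identity.} Here I use the 2D Euler structure instead of differentiating \eqref{b-g} directly. Since $\Div W=0$,
\[
\Div(W\otimes W)=(W\cdot\nabla)W=\nabla\tfrac{|W|^2}{2}-W^{\perp}\omega ,
\]
where $\omega=\curl W$ and $W^\perp$ is the appropriate rotation. I then compute the curl of each mode. Assuming $|k|=1$ (as in $\mathbb{S}^1$), or keeping the normalization general, I expect
\[
\curl\big(\ii\bar k e^{\ii k\cdot\xi}/|k|\big)=c\,|k|\,e^{\ii k\cdot \xi}
\]
with $c=\pm1$ fixed by the orientation of $\bar k$. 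On the unit circle this gives $\omega=\pm\Phi$, where $\Phi=\sum_k b_k e^{\ii k\cdot\xi}/|k|$. Similarly, $W=\pm\nabla^\perp\Phi$ because each mode is a rotated gradient of $e^{\ii k\cdot\xi}$. Hence
\[
W^\perp\omega=\pm\nabla\Phi\,\Phi=\tfrac12\nabla|\Phi|^2,
\]
with $\Phi$ real by the same symmetry argument as above. This yields $\Div(W\otimes W)=\tfrac12\nabla\big(|W|^2-|\Phi|^2\big)$.

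\textbf{Main obstacle.} The only delicate point is the sign bookkeeping: the orientation convention for $\bar k$, the sign in $(W\cdot\nabla)W=\nabla|W|^2/2+\omega\,W^{\perp}$, and the requirement that $|k|=1$. Without $|k|=1$, $\omega$ is not a multiple of $\Phi$, since it carries an extra factor of $|k|$, so the identity uses $\Lambda\subseteq\mathbb{S}^1$ essentially. I will fix $\bar k=k^\perp$ and check one mode explicitly to pin down these signs.
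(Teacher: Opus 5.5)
The gap is in the final step, and it cannot be closed, because the identity as stated has the wrong sign. Your checks of reality, divergence-freeness and \eqref{b-g} are fine with the convention $\overline{-k}=-\bar k$, and the vorticity argument is the standard route; the paper gives no proof of its own and only cites the lemma.

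The problem is where you write $W^\perp\omega=\pm\nabla\Phi\,\Phi=\tfrac12\nabla|\Phi|^2$, which silently picks the sign that yields the stated formula. That sign is not a free convention. Reversing the orientation of every $\bar k$ changes $W$ and $\omega$ together, so $\omega W^\perp$ does not depend on it.

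Fix $\bar k=k^\perp=(k_2,-k_1)$ and $W^\perp=(W_2,-W_1)$. This is the convention in which your identity $(W\cdot\nabla)W=\nabla\frac{|W|^2}{2}-\omega W^\perp$ holds. Then:
\begin{itemize}
\item $W=\nabla^\perp\Phi$ with $\Phi=\sum_k b_k e^{\ii k\cdot\xi}$, which is real;
\item $\omega=\curl W=-\Delta\Phi=\Phi$, since $|k|=1$;
\item $W^\perp=(\nabla^\perp\Phi)^\perp=-\nabla\Phi$.
\end{itemize}
Therefore
\[
\Div(W\otimes W)=\nabla\tfrac{|W|^2}{2}-\omega W^{\perp}=\nabla\tfrac{|W|^2}{2}+\Phi\nabla\Phi=\tfrac12\nabla\big(|W|^2+|\Phi|^2\big).
\]
Differentiating \eqref{b-g} term by term gives the same result. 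For $|j|=|k|=1$, the divergence of the $(j,k)$ term, symmetrized with the $(k,j)$ term, is $\nabla\big(\tfrac12 b_jb_k(1-\bar j\cdot\bar k)e^{\ii(j+k)\cdot\xi}\big)$. Summed over $j,k$, this is $\tfrac12(|W|^2+\Phi^2)$ up to a constant.

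The one-mode check you propose already exposes the error. Take $\Lambda=\{\pm e_1\}$ and $b_{\pm e_1}=\tfrac12$. Then $W=\sin\xi_1\,e_2$ and $\Phi=\cos\xi_1$. So $W\otimes W=\sin^2\xi_1\,e_2\otimes e_2$ is divergence-free (a shear flow), consistent with $|W|^2+\Phi^2\equiv1$. But $\tfrac12\nabla(|W|^2-\Phi^2)=\tfrac12\nabla(-\cos 2\xi_1)=(\sin 2\xi_1,0)\neq0$.

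The error is harmless for the paper. In \eqref{struc} the affected scalar enters only $P^{(1,1)}_{2q}$ and $F^{(1,1)}_{2q}$, and their bounds do not depend on this sign. Your proof should conclude with $+|\Phi|^2$ rather than choose the sign to match the statement.
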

\begin{lemma}[{Geometric Lemma} \cite{CDS}]\label{first S}Let $\epsilon>0$ and $B_{\sigma}(0)$ denote the ball of radius $\sigma$ centered at $\rm Id$ in the space of $2\times2$ symmetric matrices. There exist $\Lambda\subseteq \mathbb{S}^1\cap\mathbb{Q}^2 $ 
that consists of vectors $k$ with associated orthonormal basis $(k, \bar{k} )$  and smooth function $a_{k}:B_{10^{-3}}(\rm Id)\rightarrow\mathbb{R}$ such that for each $R\in B_{10^{-3}}(\rm Id)$, we have the following identity:
$$R=\sum_{k\in\Lambda}a^2_{k}(R) \bar{k}\otimes\bar{k}.$$
\end{lemma}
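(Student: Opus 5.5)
The plan is to reduce the identity to a linear algebra statement in the three-dimensional space $\mathcal S^{2\times2}$ of symmetric $2\times 2$ matrices, and then take square roots of positive coefficients. Set $M_{\bar k}:=\bar k\otimes\bar k$.

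\textbf{Step 1: choose the directions.} I will fix four rational unit vectors
\[
\bar k_1=e_1,\quad \bar k_2=e_2,\quad \bar k_3=\bigl(\tfrac35,\tfrac45\bigr),\quad \bar k_4=\bigl(\tfrac35,-\tfrac45\bigr).
\]
For each of them I set $k_i:=\bar k_i^{\perp}$, which is again rational and of unit length, so that $(k_i,\bar k_i)$ is an orthonormal basis. A direct computation gives
\[
M_{\bar k_3}+M_{\bar k_4}=\mathrm{diag}\bigl(\tfrac{18}{25},\tfrac{32}{25}\bigr).
\]
Hence
\[
\mathrm{Id}=\tfrac{16}{25}M_{\bar k_1}+\tfrac{9}{25}M_{\bar k_2}+\tfrac12 M_{\bar k_3}+\tfrac12 M_{\bar k_4},
\]
and all four coefficients $c^0_i$ are strictly positive. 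Moreover $M_{\bar k_1},M_{\bar k_2},M_{\bar k_3}$ are linearly independent, because only $M_{\bar k_3}$ has a nonzero off-diagonal entry. Therefore the linear map
\[
A:\R^4\to\mathcal S^{2\times2},\qquad c\mapsto \sum_i c_iM_{\bar k_i},
\]
is onto.

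\textbf{Step 2: define affine coefficients.} Let $L:\mathcal S^{2\times 2}\to\R^4$ be a fixed right inverse of $A$, for instance the one that only uses the independent triple $M_{\bar k_1},M_{\bar k_2},M_{\bar k_3}$. Define
\[
c(R):=c^0+L(R-\mathrm{Id}).
\]
Then $A c(R)=\mathrm{Id}+(R-\mathrm{Id})=R$ for every $R$, and $c$ is affine, hence smooth.

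The only quantitative point, and the one I expect to be the main obstacle, is positivity: I need $c_i(R)>0$ on $B_{10^{-3}}(\mathrm{Id})$. This holds as soon as $\|L\|\cdot 10^{-3}<\min_i c^0_i=\tfrac{9}{25}$. Solving explicitly for the triple, the entries of $L$ are of size at most about $2$ (the off-diagonal entry $\tfrac{12}{25}$ gets inverted), so $\|L\|$ is far below $360$ and the margin is ample. If some radius were too tight, I would instead enlarge the family of directions, e.g. with $(\tfrac45,\pm\tfrac35)$, which only improves conditioning.

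\textbf{Step 3: symmetrize and take square roots.} To guarantee $-\Lambda=\Lambda$, which is needed when Lemma \ref{Betrimi} is applied, I take
\[
\Lambda:=\{\pm k_i\}_{i=1}^4,\qquad \overline{(-k)}:=-\bar k .
\]
Since $M_{-\bar k}=M_{\bar k}$, splitting each coefficient in half gives
\[
R=\sum_{k\in\Lambda}\tfrac12 c_{i(k)}(R)\,\bar k\otimes\bar k .
\]
Finally I set $a_k(R):=\bigl(\tfrac12 c_{i(k)}(R)\bigr)^{1/2}$. This is smooth on $B_{10^{-3}}(\mathrm{Id})$ because the radicand stays bounded away from $0$ there, and it satisfies $a_k=a_{-k}$. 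This yields exactly $R=\sum_{k\in\Lambda}a_k^2(R)\,\bar k\otimes\bar k$ with $\Lambda\subseteq\mathbb S^1\cap\mathbb Q^2$ finite, as the lemma claims.
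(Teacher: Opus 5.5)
Your proof is correct. The identity $\mathrm{Id}=\tfrac{16}{25}M_{\bar k_1}+\tfrac{9}{25}M_{\bar k_2}+\tfrac12 M_{\bar k_3}+\tfrac12 M_{\bar k_4}$ holds. The explicit right inverse $L(S)=(s_{11}-\tfrac34 s_{12},\,s_{22}-\tfrac43 s_{12},\,\tfrac{25}{12}s_{12},\,0)$ shifts each coefficient by less than $3\cdot 10^{-3}$ on the ball, well below $\tfrac{9}{25}$. Your symmetrization $\overline{(-k)}=-\bar k$, $a_k=a_{-k}$ is exactly what Lemma \ref{Betrimi} needs for real-valuedness.

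The paper gives no proof of its own and only cites \cite{CDS}. Your argument is the standard one behind that lemma: choose finitely many rational directions with $\mathrm{Id}$ in the interior of the positive cone of the $\bar k\otimes\bar k$, let the coefficients depend affinely on $R$, use positivity near $\mathrm{Id}$, and take square roots.
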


We give the definitions of Besov spaces and mixed time-spatial Besov spaces (the so-called Chemin-Lerner spaces.)
\begin{definition}[\cite{MNY, Sch, WZ}]Let $s\in\mathbb{R}$ and $1\le p,q\le\infty$.  The nonhomogeneous Besov space $ B^s_{p,q}(\TTT^2)$ consists of all $\TTT^2$-periodic distributions $u\in \mathcal{D}'(\TTT^2)$  such that
\begin{equation}\nonumber
\|u\|_{ B^s_{p,q}(\TTT^2)}\overset{\text{def}}{=}\Big{\|}\left(2^{js}\|\Delta_j u\|_{L^p(\TTT^2)}\right)_{j\in\ZZ}\Big{\|}_{\ell^q(\ZZ)}<\infty.
\end{equation}
\end{definition}

\begin{definition}Let $T>0$, $s\in\R$ and $1\le r,p,q\le\infty$. The mixed  time-spatial Besov spaces ${L}^r_TB^s_{p,q}$ consists of all $u\in\mathcal{D}'$ satisfying
\begin{align*}
\|u\|_{\widetilde{L}^r_TB^s_{p,q}(\TTT^2)}\overset{\text{def}}{=}\Big{\|}(2^{js}\|\Delta_j u\|_{L^r([0,T];L^p(\TTT^2))})_{j\in\ZZ}\Big{\|}_{\ell^q(\ZZ)}<\infty.
\end{align*}
\end{definition}

\begin{lemma}[\cite{BCD11}]\label{T-D}
Let \(1  \leq p  \leq \infty\), $-1-2\min\{\tfrac{1}{p}, \tfrac{1}{p'}\}< s<1$. Consider the transport-diffusion equation
\begin{align}\label{T-D-E}
\partial_t u-\Delta u+v\cdot \nabla u=g,\qquad
  u(0,x)=u_0(x),
  \end{align}
where $u_0$ and $g$ stand for mean-zero given initial data and external force, respectively. And $v$ is  divergence-free vector field. There exists a constant \(C\) which depends only on  \(s\) and $p$ and is such that for any smooth solution \(u\) of \eqref{T-D-E} and \(1 \leq \rho_1 \leq \rho \leq \infty\), we have
\[
\|u\|_{\tilde{L}_T^{\rho}B^{s+\frac{2}{\rho}}_{p,1}(\TTT^2)} \leq C e^{C V_{p}(T)} 
\left(  \|u_0\|_{B_{p,1}^s(\TTT^2)} +\|g\|_{\tilde{L}_T^{p_1}(B_{p,1}^{s-2+\frac{2}{\rho_1}}(\TTT^2))} \right),
\]
where 
\begin{align*}
 V_{p}(T)=\int_0^T\|\nabla v(s)\|_{ L^\infty(\TTT^2)}\dd s.
\end{align*}
\end{lemma}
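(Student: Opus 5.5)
The plan is to prove the estimate block by block in frequency, using the divergence-free structure of $v$ to make the transport term harmless in $L^p$. Then I would sum over blocks with a commutator estimate and close with Gronwall's inequality.

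\textbf{Step 1: localized equation.} Apply $\Delta_j$ to \eqref{T-D-E}. With $u_j:=\Delta_j u$, this gives
\[
\partial_t u_j-\Delta u_j+v\cdot\nabla u_j=\Delta_j g+R_j,\qquad R_j:=[v\cdot\nabla,\Delta_j]u .
\]
Since $u_0$ and $g$ are mean-zero, and the equation preserves the mean because $\Div v=0$, $u$ stays mean-zero. On $\TTT^2$ every block, including the lowest one, is therefore supported on integer frequencies with $|\xi|\ge 1$, so it satisfies a Bernstein-type lower bound.

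\textbf{Step 2: localized $L^p$ estimate.} For $1\le p<\infty$, multiply the localized equation by $|u_j|^{p-2}u_j$ and integrate. The transport term vanishes because $\Div v=0$. For the dissipative term I would use the Danchin/Planchon lower bound
\[
-\int \Delta u_j\,|u_j|^{p-2}u_j\,\dd x\ \ge\ c_p 2^{2j}\|u_j\|_{L^p}^p .
\]
This yields
\[
\|u_j(t)\|_{L^p}\le e^{-c2^{2j}t}\|\Delta_j u_0\|_{L^p}+\int_0^t e^{-c2^{2j}(t-\tau)}\big(\|\Delta_j g\|_{L^p}+\|R_j\|_{L^p}\big)\dd\tau .
\]
The case $p=\infty$ follows by letting $p\to\infty$, since the constant $c_p$ stays bounded below.

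\textbf{Step 3: time integrability.} Take the $L^\rho_T$ norm and apply Young's inequality in time, using $\|e^{-c2^{2j}t}\|_{L^r_t}\simeq 2^{-2j/r}$. With $1+\frac1\rho=\frac1r+\frac1{\rho_1}$ for the forcing term, and with the commutator measured in $L^1_T$, this gives
\[
2^{j(s+\frac2\rho)}\|u_j\|_{L^\rho_TL^p}\lesssim 2^{js}\|\Delta_j u_0\|_{L^p}+2^{j(s-2+\frac2{\rho_1})}\|\Delta_j g\|_{L^{\rho_1}_TL^p}+2^{js}\|R_j\|_{L^1_TL^p}.
\]

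\textbf{Step 4: commutator estimate.} This is the main obstacle. I need
\[
\sum_j 2^{js}\|R_j(t)\|_{L^p}\le C\|\nabla v(t)\|_{L^\infty}\|u(t)\|_{B^s_{p,1}}
\]
exactly in the range $-1-2\min\{\frac1p,\frac1{p'}\}<s<1$. I would prove it via Bony's decomposition.
\begin{itemize}
\item The paraproduct commutator $[T_{v}\cdot\nabla,\Delta_j]u$ is controlled for every $s$ by a first-order Taylor expansion of the kernel, producing $\|\nabla v\|_{L^\infty}$.
\item The term $T_{\nabla u}v$ requires $s<1$.
\item The remainder $R(v,\nabla u)$ is rewritten as $\Div R(v,u)$ using $\Div v=0$. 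Its low-regularity bound holds for $s>-1-2\min\{\frac1p,\frac1{p'}\}$, using Bernstein from $L^{p/2}$ or duality when $p<2$.
\end{itemize}

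\textbf{Step 5: closing.} Sum Step 3 over $j$ with $\rho=\infty$ first. Inserting the commutator bound gives
\[
\|u\|_{\widetilde L^\infty_tB^s_{p,1}}\le C\big(\|u_0\|_{B^s_{p,1}}+\|g\|\big)+C\int_0^t\|\nabla v\|_{L^\infty}\|u\|_{B^s_{p,1}}\dd\tau ,
\]
and Gronwall's inequality yields the factor $e^{CV_p(T)}$. Finally, substitute this $L^\infty_T B^s_{p,1}$ bound into the commutator term of the general-$\rho$ inequality. This bounds that term by $CV_p(T)e^{CV_p(T)}\times(\text{data})$, which is absorbed by enlarging the constant in the exponent, and gives the stated estimate for all $1\le\rho_1\le\rho\le\infty$.
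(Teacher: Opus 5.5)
The paper gives no proof of this lemma; it quotes it from \cite{BCD11}. Your Eulerian route (localize, $L^p$ energy identity with the Planchon--Danchin lower bound, Young in time, commutator, Gronwall) is sound for $1<p<\infty$ and $-1<s<1$. It breaks down at $p=\infty$, which is the only case the paper uses: the spaces $\widetilde L^\infty_tB^{-1/2}_{\infty,1}\cap L^1_tB^{3/2}_{\infty,1}$, and $s=0$ for \eqref{wtq2}.

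Your claim that $c_p$ stays bounded below as $p\to\infty$ is false. Take $u=\cos(nx_1)-\tfrac19\cos(3nx_1)$, with $n\simeq 2^j$ chosen so that the spectrum $\{\pm n,\pm 3n\}e_1$ lies in one dyadic annulus $2^j\mathcal{C}$.
\begin{itemize}
\item Writing $c=\cos(nx_1)$, you get $u=\tfrac43 c-\tfrac49 c^3$ and $\partial_1 u=-\tfrac43 n\sin^3(nx_1)$.
\item So $|u|$ attains its maximum $\tfrac89$ only at degenerate points, where $|u|\approx \tfrac89-\tfrac13 n^4y^4$, with $y$ the distance to the point.
\item In particular $\Delta u=0$ wherever $|u|$ is maximal, so even the pointwise version of your inequality fails.
\item Laplace's method gives $(p-1)\int|\nabla u|^2|u|^{p-2}\,\dd x\simeq p^{-1/2}n^2\|u\|_{L^p}^p$ as $p\to\infty$.
\end{itemize}
At $p=1$ things are worse. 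The function $u=\sin^3(nx_1)$ changes sign only at triple zeros, so $-\int\Delta u\,\mathrm{sgn}(u)\,\dd x=0$.

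What is missing is a source of parabolic decay that is uniform in $p\in[1,\infty]$. The route behind \cite{BCD11} has three ingredients:
\begin{itemize}
\item The heat-semigroup bound $\|e^{t\Delta}f\|_{L^p}\le Ce^{-ct2^{2j}}\|f\|_{L^p}$ for $\mathrm{supp}\,\hat f\subset 2^j\mathcal{C}$, which comes from the explicit kernel.
\item A Lagrangian change of variables along the flow $\psi_j$ of $S_{j-N}v$, which removes the convection term. On time intervals where $\int\|\nabla v\|_{L^\infty}\dd t\le c$ is small, $u_j\circ\psi_j$ stays essentially localized at frequency $2^j$.
\item Iterating over $\lesssim 1+V_p(T)$ such intervals, which produces the factor $e^{CV_p(T)}$.
\end{itemize}

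Second, Step 4 claims more than $\|\nabla v\|_{L^\infty}$ can give. Consider the remainder $\Delta_j\Div R(v,u)$. Passing through $L^m$ with $\tfrac1m=\tfrac1p+\tfrac1a$ costs a Bernstein factor $2^{2j/a}$. That factor must be paid for by $\|\Delta_k v\|_{L^a}\lesssim 2^{-k(1+2/a)}$, that is, by $\nabla v\in B^{2/a}_{a,\infty}$. On $\TTT^2$ the bare $L^\infty$ bound only gives $\|\Delta_k v\|_{L^a}\lesssim 2^{-k}\|\nabla v\|_{L^\infty}$, with no extra decay. So the sum $\sum_{k\ge j-N}2^{(j-k)(1+s)}$ converges only for $s>-1$.

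In \cite{BCD11} the lower limit $-1-2\min\{\tfrac1{p_1},\tfrac1{p'}\}$ is paired with a velocity norm of the form $\int_0^T\|\nabla v\|_{B^{2/p_1}_{p_1,\infty}\cap L^\infty}\,\dd t$. The statement here combines that range with the pure $L^\infty$ norm. With $V_p$ as written, your argument proves the lemma only for $-1<s<1$ and $1<p<\infty$. The restriction on $s$ is harmless for the paper, which needs only $s\in\{-\tfrac12,0\}$; the failure at $p=\infty$ is not.
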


\begin{proposition}\label{Est-fe}For $k\in\mathbb{Q}^2\setminus\{0\}$, we have
\begin{align*}
\|fe^{\ii\lambda k\cdot x}\|_{L^\infty_tB^{-1}_{\infty,\infty}(\TTT^2)}\lesssim_{|k|}\lambda^{-1}\|f\|_{L^\infty_tL^\infty(\TTT^2)}+\lambda^{-2}\|f\|_{L^\infty_tC^2(\TTT^2)}.
\end{align*}
\end{proposition}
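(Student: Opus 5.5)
We use Littlewood–Paley localization and exploit the high oscillation $e^{\ii\lambda k\cdot x}$. Fix $t$ and set $g:=f e^{\ii\lambda k\cdot x}$. The $B^{-1}_{\infty,\infty}$ norm is $\sup_j 2^{-j}\|\Delta_j g\|_{L^\infty}$, so we split dyadic blocks according to whether $2^j$ is comparable to $\lambda|k|$ or not.

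For high blocks, $2^j\gtrsim\lambda$, we use only $\|\Delta_j g\|_{L^\infty}\lesssim\|g\|_{L^\infty}=\|f\|_{L^\infty}$. This gives
\[
2^{-j}\|\Delta_j g\|_{L^\infty}\lesssim_{|k|}\lambda^{-1}\|f\|_{L^\infty}.
\]
For low blocks, $2^{j}\le c\lambda|k|$ with $c$ small, we write $g$ as a derivative of something smaller. Since $\Delta(e^{\ii\lambda k\cdot x})=-\lambda^2|k|^2e^{\ii\lambda k\cdot x}$, we have the identity
\[
f e^{\ii\lambda k\cdot x}=-\frac{1}{\lambda^2|k|^2}\Big(\Delta\big(fe^{\ii\lambda k\cdot x}\big)-\Delta f\,e^{\ii\lambda k\cdot x}-2\ii\lambda\, k\cdot\nabla f\, e^{\ii\lambda k\cdot x}\Big).
\]
The $\Delta f$ term is bounded directly by $\lambda^{-2}\|f\|_{C^2}$.

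The $k\cdot\nabla f$ term carries only $\lambda^{-1}$, which is not enough. The cleanest route is to iterate the identity once more on $k\cdot\nabla f\,e^{\ii\lambda k\cdot x}$. That would demand $C^3$, so instead we apply the one-dimensional integration by parts $e^{\ii\lambda k\cdot x}=(\ii\lambda|k|^2)^{-1}k\cdot\nabla e^{\ii\lambda k\cdot x}$. This gives
\[
fe^{\ii\lambda k\cdot x}=\frac{1}{\ii\lambda|k|^2}\Big(\Div\big(kfe^{\ii\lambda k\cdot x}\big)-(k\cdot\nabla f)e^{\ii\lambda k\cdot x}\Big).
\]
Applying it twice yields
\[
g=\frac{-1}{\lambda^2|k|^4}\,(k\cdot\nabla)^2\big(fe^{\ii\lambda k\cdot x}\big)+\frac{2}{\lambda^2|k|^4}(k\cdot\nabla)\big((k\cdot\nabla f)e^{\ii\lambda k\cdot x}\big)-\frac{1}{\lambda^2|k|^4}\big((k\cdot\nabla)^2f\big)e^{\ii\lambda k\cdot x}.
\]
Now we apply $\Delta_j$ with Bernstein, $\|\Delta_j\partial h\|_{L^\infty}\lesssim 2^j\|h\|_{L^\infty}$, to each term and multiply by $2^{-j}$. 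The first term gives $\lambda^{-2}2^{j}\|f\|_{L^\infty}\lesssim\lambda^{-1}\|f\|_{L^\infty}$ on low blocks. The second gives $\lambda^{-2}\|f\|_{C^1}$. The third gives $\lambda^{-2}2^{-j}\|f\|_{C^2}\lesssim\lambda^{-2}\|f\|_{C^2}$, noting $2^{-j}\le 2$ on the torus since the inhomogeneous blocks start at $j\ge-1$. The intermediate blocks, $2^j\sim\lambda$, are covered by the high-block bound. In fact the representation makes the case split unnecessary, since every term yields at most $\max(\lambda^{-2}2^j,\lambda^{-1})$-type factors that are controlled.

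The subtle point is the low-frequency and $\Delta_{-1}$ part. The naive bound $\|\Delta_{-1}g\|_{L^\infty}\le\|f\|_{L^\infty}$ loses everything. This is exactly why the formula above is needed: it shows $g$ is a second derivative up to a $\lambda^{-2}\|f\|_{C^2}$ remainder. The term $(k\cdot\nabla)^2$ acting on a block of frequency $2^j$ costs $2^{2j}$, which combined with the $2^{-j}$ weight leaves $2^j\lambda^{-2}\le\lambda^{-1}$ for $2^j\lesssim\lambda$. Taking the supremum in $j$ and then in $t$ gives the claim, with constants depending on $|k|$. The main obstacle, such as it is, is ensuring no derivative beyond $C^2$ enters, and the two-fold integration by parts handles that.
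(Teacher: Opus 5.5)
Your argument is correct, but you organize the integration by parts differently from the paper, and that choice is what forces your Littlewood--Paley case split. The paper uses $e^{\ii\lambda k\cdot x}=(\ii\lambda|k|^2)^{-1}k\cdot\nabla e^{\ii\lambda k\cdot x}$ once on the main term. It then applies it a second time only to the remainder $(k\cdot\nabla f)e^{\ii\lambda k\cdot x}$ (the paper writes $|k|$ where $|k|^2$ is meant; this is harmless). The result is
\[
fe^{\ii\lambda k\cdot x}=\frac{\Div\big(kfe^{\ii\lambda k\cdot x}\big)}{\ii\lambda|k|^2}+\frac{\Div\big(k(k\cdot\nabla f)e^{\ii\lambda k\cdot x}\big)}{\lambda^2|k|^4}-\frac{\big((k\cdot\nabla)^2f\big)e^{\ii\lambda k\cdot x}}{\lambda^2|k|^4}.
\]
Each term is either a first derivative of a bounded function or a bounded function. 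So $\|\partial h\|_{B^{-1}_{\infty,\infty}}\lesssim\|h\|_{L^\infty}$ and $L^\infty\hookrightarrow B^{-1}_{\infty,\infty}$ give the estimate on all dyadic blocks at once, with no frequency splitting.

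In your representation the leading term is $(k\cdot\nabla)^2(fe^{\ii\lambda k\cdot x})$. Its weighted blocks are only bounded by $\lambda^{-2}2^{j}\|f\|_{L^\infty}$, which is acceptable for $2^j\lesssim\lambda$ but not beyond. Your high-frequency bound $2^{-j}\|\Delta_j g\|_{L^\infty}\lesssim 2^{-j}\|f\|_{L^\infty}$ is therefore genuinely needed. In particular, your closing remark that the representation ``makes the case split unnecessary'' is false, because $\lambda^{-2}2^j$ is unbounded as $j\to\infty$. Keep the split, or switch to the paper's one-sided iteration, and the proof is complete.

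Your version makes explicit that the only danger is at frequencies below $\lambda$, including the block $\Delta_{-1}$. The paper's version is shorter and needs only the mapping $\partial: B^0_{\infty,\infty}\to B^{-1}_{\infty,\infty}$.
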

\begin{proof}
Since $e^{\ii\lambda k\cdot x}=\frac{\Div(e^{\ii\lambda k\cdot x}k)}{\ii\lambda |k|}$, we have
\begin{align*}
fe^{\ii\lambda k\cdot x}=&\frac{1}{\ii\lambda |k|}\Div(fe^{\ii\lambda k\cdot x}k)-\frac{1}{\ii\lambda |k|}e^{\ii\lambda k\cdot x}k\cdot\nabla f\\
=&\frac{1}{\ii\lambda |k|}\Div(fe^{\ii\lambda k\cdot x}k)-\frac{1}{(\ii\lambda |k|)^2}\Big(\Div(k\cdot\nabla fe^{\ii\lambda k\cdot x}k)-e^{\ii\lambda k\cdot x}k\cdot\nabla (k\cdot\nabla f)\Big).
\end{align*}
Therefore, we conclude that
\begin{align*}
&\|fe^{\ii\lambda k\cdot x}\|_{L^\infty_tB^{-1}_{\infty,\infty}(\TTT^2)}\\
\lesssim&\lambda^{-1}\|fe^{\ii\lambda k\cdot x}\|_{B^0_{\infty,\infty}(\TTT^2)}+\lambda^{-2}(\|(\nabla f)e^{2\pi\ii\lambda k\cdot x}\|_{B^0_{\infty,\infty}(\TTT^2)}+\|(\nabla^2 f)e^{\ii\lambda k\cdot x}\|_{B^{-1}_{\infty,\infty}(\TTT^2)})\\
\lesssim&\lambda^{-1}\|f\|_{L^\infty_tL^\infty(\TTT^2)}+\lambda^{-2}\|f\|_{L^\infty_tC^2(\TTT^2)}.
\end{align*}
\end{proof}
%%%%%%%%%%%%%%%%%%%%%%%%%%%%%%%%%%%%%%%%%%%%%%%%%%%%%%%%%%%%%%%%%%%%%%%%%%%%%%%%%%%%%%%%%%%%%%%%%%%%%%%%%%%%%%%%%%%%%%%%%%%%%%

\section*{Acknowledgement}
We thank Professor Palasek for a helpful exchange, through which he kindly informed us of the independent and simultaneous work of himself, Cheskidov, and Dai \cite{CDP}. We are pleased to note that both groups arrived at a similar conclusion through different methods. We also thank Professors Cheskidov and Dai for their valuable comments on this manuscript.
This work was supported by the National Key Research and Development Program of China    (Grant number No. 2022YFA1005700) and  the National Natural Science Foundation of China (Grant numbers No.12371095, No. 12401277 and No. 
12501304).

\end{document}